\def\cexp#1#2{{#1}^{\odot #2}}
\theoremstyle{plain}
\newtheorem*{thm@P}{\pipp@}
\newenvironment{thmC}[1]{\def\pipp@{#1}\begin{thm@P}}{\end{thm@P}}
\newtheorem{thm}{Theorem}[section]
\newtheorem*{thm*}{Theorem}
\let\oldnewtheorem\newtheorem 
\renewcommand{\newtheorem}[2]{\oldnewtheorem{#1}[thm]{#2} 
	\newrefformat{#1}{#2~\ref{##1}}} 
\newtheorem{prop}{Proposition}
\newtheorem{cor}{Corollary}
\newtheorem{lem}{Lemma}
\newtheorem{fact}{Fact}
\theoremstyle{definition}
\newtheorem{defn}{Definition}
\theoremstyle{remark}
\newtheorem{rem}{Remark}
\newcommand{\R}{\mathbb R}
\newcommand{\N}{\mathbb N}
\newcommand{\K}{\mathbb K}
\newcommand{\no}{\mathbf {No}}
\newcommand{\Z}{\mathbb Z}
\newcommand{\m}{\mathfrak m}
\newcommand{\x}{\mathbf{x}}
\newcommand{\nin}{\not\in}
\newcommand{\class}[1]{\left| #1/ \! \asymp \right|}
\def\eps{\varepsilon}
\DeclareMathOperator{\suchthat}{\,:\,}
\def\on{\mathbf{On}}
\def\M{\mathfrak{M}}
\def\G{\mathfrak{G}}
\def\T{\mathbb{T}}
\def\E{\mathbb{E}}
\def\Sk{Sk}
\title{Asymptotic analysis of Skolem's exponential functions}
\author{Alessandro Berarducci}
\author{Marcello Mamino}
\address{Alessandro Berarducci and Marcello Mamino, Università di Pisa, Dipartimento di Matematica, Largo Bruno Pontecorvo 5, 56127 Pisa, Italy}
\date{18 November 2019, revised 24 March 2020}
\thanks{Both authors were supported by the Italian research project PRIN 2017, ``Mathematical logic: models, sets, computability'', Prot. 2017NWTM8RPRIN}
\keywords{Skolem problem, surreal numbers, exponentiation, ordinals}
\subjclass[2010]{03C64,	03E10,16W60,26A12,41A58}
\begin{document}

\begin{abstract}
	Skolem (1956) studied the germs at infinity of the smallest class of real valued functions on the positive real line containing the constant $1$, the identity function $\x$, and such that whenever $f$ and $g$ are in the set, $f+g,fg$ and $f^g$ are in the set. This set of germs is well ordered and Skolem conjectured that its order type is epsilon-zero. Van den Dries and Levitz (1984) computed the order type of the fragment below $2^{2^\x}$. Here we prove that the set of asymptotic classes within any archimedean class of Skolem functions has order type $\omega$. As a consequence we obtain, for each positive integer $n$, an upper bound for the fragment below $2^{n^\x}$. We deduce an epsilon-zero upper bound for the fragment below $2^{\x^\x}$, improving the previous epsilon-omega bound by Levitz (1978). A novel feature of our approach is the use of Conway's surreal number for asymptotic calculations. 
\end{abstract}
\maketitle

\tableofcontents

\section{Skolem problem}
Let $\Sk$ be the smallest set of functions $f:\R^{>0}\to \R^{>0}$ containing the constant function $1$ and the identity function $\x$, and such that if $f,g\in \Sk$, then also $f+g,fg$ and $f^g$ are in $\Sk$. A Skolem function is a function belonging to $\Sk$. Each Skolem function restricts to a function $f:\N^{>0}\to \N^{>0}$ from positive integers to positive integers and it is determined by its restriction. 

We order $\Sk$ by $f< g$ if $f(x)<g(x)$ for all large enough $x$ in $\R$ (or equivalently in $\N$). This defines a total order. 
Indeed \citet{Hardy1910} established the corresponding result for a larger class of functions. The totality of the order also follows from the fact that the structure $\R_{\exp} = (\R,<,+,\cdot,\exp)$  is o-minimal \cite{Wilkie1996} and the Skolem functions are definable in $\R_{\exp}$. 

In this paper we study the order type of $\Sk$ and its fragments.   
\citet{Skolem1956} conjectured that $(\Sk,<)$ is a well order and its order type is  $\eps_{0} = \sup\{\omega, \omega^\omega, \omega^{\omega^\omega},\ldots\}$ (the least ordinal $\eps$ such that $\eps= \omega^\eps$). He also exhibited a well ordered subset of order type $\eps_{0}$, namely the subset generated from $1$ and $\x$ using the operations $+,\cdot$ and exponentiation $g\mapsto \x^g$ with base $\x$. 
\citet{Ehrenfeucht73}, using the tree theorem of \citet{Kruskal1960}, proved that $\Sk$ is indeed well ordered. 
\citet{Levitz1978} showed that its order type is at most equal to the smallest critical epsilon-number (the least ordinal $\alpha$  such that $\alpha = \eps_{\alpha}$).  This improves the earlier bound $\Gamma_0$ established by \citet{Schmidt1978}, where $\Gamma_0$ is the Feferman-Sch\"utte ordinal. 

Given a well ordered set $X$, we write $|X|$ for the order type of $X$. If $f\in \Sk$, we let $|f|$ be the order type of the set of Skolem functions less than $f$. The Skolem functions $<2^\x$ coincide with the non-zero polynomial functions with coefficients in $\N$, so $|2^\x| = |\omega^\omega|$. 

In \cite{Levitz1978} Levitz introduced the following definition:  a
regular function is a Skolem function $g$ such that for every Skolem
function $f<g$, one has  $f^\x < g$. The first regular functions are $g_0=
2$ and $g_1= 2^{2^\x}$ and it is not difficult to show that the regular
functions $<2^{\x^\x}$ are exactly the functions of the form $2^{n^\x}$
with $2\leq n \in \N$. Levitz proved that  $|g_{1+\alpha}| \leq
\eps_{\alpha}$, where  $(g_{\alpha})_\alpha$ is a transfinite enumeration
of the regular functions, and $ (\eps_\alpha)_\alpha$ is an enumeration of
the epsilon numbers (i.e.\ the ordinals $\eps$ sastisfying $\eps = \omega^\eps$). 
Levitz's result then yields $|2^{2^\x}| \leq \eps_0$, $2^{3^\x} \leq \eps_1$ and $|2^{\x^\x}| \leq \eps_\omega$ (since $g_{1+\omega}= g_\omega = 2^{\x^\x}$). 

In \cite{Dries1984} van den Dries and Levitz made a dramatic improvement on Levitz's bound on $g_1$ by showing that $|2^{2^{x}}| = \omega^{\omega^{\omega}}$. 
Here we prove the following bound on the fragments determined by the first $\omega$ regular functions. Let $\omega_0 = 1$ and $\omega_{n+1} = \omega^{\omega_n}$ for $n\in \N$. We have:
\begin{thmC}{\prettyref{thm:2nx}}
	$|2^{n^\x}| \leq \omega_{n+1}$ for $n\geq 1$.  In particular $|2^{3^\x}| \leq   \omega_4 = \omega^{\omega^{\omega^\omega}}$. 
\end{thmC}
\noindent \prettyref{thm:2nx} should be compared with Levit'z bound $|2^{3^\x}| \leq \eps_1$. 
As a consequence we obtain the following upper bound on $2^{\x^\x}$, which improves Levitz's $\eps_\omega$ bound. 
\begin{thmC}{\prettyref{thm:2xx}}
	$|2^{\x^\x}| \leq \eps_0$. 	
\end{thmC} 

A novel feature of our approach is the use of Conway's surreal
numbers~\cite{Conway76} for asymptotic calculations, justified by the fact
that the Skolem functions can be embedded in the exponential field of
surreal numbers,
that is, one can associate a surreal number to each Skolem function
preserving the field operations, exponentiation, and ordering.
Our main result is as follows.
\begin{thmC}{\prettyref{thm:main}}
Let $c\geq 1$ be a surreal number and let $Q$ be a Skolem function. 
The set of real numbers $r\in \R$ such that there is a Skolem function~$h$
satisfying $(h/Q)^c = r + o(1)$ has no accumulation points in $\R$.
\end{thmC}

The case $c=1$ of the theorem says that, if we fix a Skolem function $Q(x)$, the set of real numbers $r$ such that there is a Skolem function $h(x)$ with $\lim_{x\to \infty}h(x)/Q(x) = r$, has no accumulation points in $\R$. This special case is sufficient to obtain the bounds above, and also yields a different proof of the upper bound in~\cite{Dries1984}. It turns out that, for technical reasons, we need to consider the general case $c\geq 1$ in order to prove the special case $c=1$. 

In the preliminary part of the paper, we prove a result
concerning the order type of the set of finite sums $\sum A$ of a well
ordered set $A$ of positive elements of an ordered group
(\prettyref{thm:finite-sums}). Unlike the known bounds by
\citet{Carruth1942} and other authors, our bound takes into account the
set of archimedean classes of $A$. 

The equality of two Skolem functions (given the defining expressions) is
decidable \citep{Richardson1969}, but it is an open problem whether the
order $<$ is decidable. \citet{Gurevlc1986} established the decidability
of $<$ below $2^{\x^2}$ and showed that the decidability of $<$ below
$2^{2^\x}$ is Turing equivalent to the decidability of the equality of two
``exponential constants'', where the exponential constants are the
elements in the smallest subset  $\E^+ \subset \R$ containing $1$ and
closed under addition, multiplication, division, and the real exponential
function. In \cite{Dries1984} van den Dries and Levitz proved that if the
quotient $f/g$ of two Skolem functions smaller than $2^{2^\x}$ tends to a
limit in $\R$, then the limit is in $\E^+$. They announced that the result
could be extended to the whole class of Skolem functions using the work of
\cite{Dahn1984a}, where a version of the field of transseries made its first
appearance. In the last part of the paper we give a proof of these facts
using surreal numbers. 

\section{Asymptotic relations}
Given $f,g$ in an ordered abelian group, we write $f\preceq g$
if $|f|\leq n|g|$ for some $n\in\N$. We say in this case that $f$ is {\bf dominated} by $g$. If both $f\preceq g$ and $g\preceq f$ hold, we say that $f$ and $g$ belong to the same {\bf archimedean class}, and we write  $f\asymp g$. We say that $f$ is {\bf strictly dominated} by $g$ if we have both $f\preceq g$ and $f\not\asymp g$; we write $f\prec g$ to express this relation.  We define $f\sim g$ as $f-g \prec f$ and we say in this case that $f$ is {\bf asymptotic} to $g$. Notice that $\sim$ is a symmetric relation. Indeed assume $f-g \prec f$ and let us prove that $f-g\prec g$. This is clear if $f\preceq g$. On the other hand if $g \prec f$, then clearly $f-g \asymp f$, contradicting the assumption. 

We write $f = o(g)$ if $f\prec g$ and $f= O(g)$ if $f\preceq g$. 

The set of germs at $+\infty$ of the Skolem functions generates an ordered field by the results of \cite{Hardy1910} or \cite{Wilkie1996} cited in the introduction, so we can use the above notations for the Skolem functions. By the cited results, the quotient $f(x)/g(x)$ of two Skolem functions tends to a limit in $\R\cup \{+ \infty\}$ for $x\to +\infty$. We then have $f\prec g$ if $f/g$ tends to $0$; 
$f\sim g$ if $f(x)/g(x)$ tends to $1$; and $f\asymp g$ if  $f/g$ tends to a non-zero limit in $\R$. Note that $f\asymp g$ if and only if there is a non-zero real number $r$ such that $f \sim rg$. We will prove as a special case of \prettyref{thm:main} that, if we fix $g$ and let $f$ vary in $\Sk$, then the corresponding real $r$ ranges in a subset of $\R$ without accumulation points.

\section{Ordinal arithmetic}
Let $\on$ be the class of all ordinal numbers.	Given $\alpha \in \on$ and
$\beta \in \on$, we write $\alpha+ \beta$ and $\alpha\beta$ (or sometimes
$\alpha \cdot \beta$) for the ordinal sum and product of the given
ordinals, and $\alpha^\beta$ for the ordinal exponentiation. We identify
each ordinal with the set of its predecessor and we denote by $\omega$ the
first infinite ordinal, which can also be thought as the set of all finite
ordinals, i.e.\ the set of natural numbers $\N$. 

\begin{defn} Given a sequence $(\alpha_i)_i$ of ordinals, we define inductively: 
\begin{enumerate}
	\item $\sum_{i<0} \alpha_i = 0$;
	\item 
	$\sum_{i<\beta+1}\alpha_i = \sum_{i<\beta}\alpha_i + \alpha_\beta$;
	\item 
	$\sum_{i<\lambda} \alpha_i = \sup_{\beta<\lambda} \sum_{i<\beta} \alpha_i$ for $\lambda$ a limit ordinal.
\end{enumerate}  
\end{defn}
We recall that every ordinal $\alpha$ can be written in a unique way in the form $\alpha = \sum_{i<n}\omega^{\gamma_i} n_i$ where $n\in \N$, $(\gamma_i)_{i<n}$ is a decreasing sequence of ordinals, and $n_i\in \N^{>0}$ for each $i<n$.  This is called the Cantor normal form of $\alpha$. 

We write $\alpha \oplus \beta$ and $\alpha \odot \beta$ for the Hessenberg sum and product \cite{Sierpinski1958}. We recall the definitions below. 
\begin{defn}
	Given $\alpha\in \on$ and $\beta\in \on$, we can find $k\in \N$ and a decreasing finite sequence of ordinals $(\gamma_i)_{i<k}$ such that 
	$\alpha = \sum_{i<k}\omega^{\gamma_i}m_i$ and $\beta =\sum_{i<k}\omega^{\gamma_i}n_i$ with $m_i,n_i < \omega$ (possibly zero). We define 
	$$\alpha \oplus \beta= \sum_{i<k} \omega^{\gamma_i} (m_i+n_i).$$
\end{defn}
\begin{defn} 
	If $\alpha = \sum_{i<k}\omega^{\alpha_i}m_i$ and $\beta = \sum_{i<l}\omega^{\beta_j}n_j$ are two ordinals in Cantor normal form, their Hessenberg product is defined as 
	$$\alpha \odot \beta = \bigoplus\limits_{i<k,j<l}\omega^{\alpha_i \oplus \beta_j} m_i n_j.$$
\end{defn}

We shall need transfinite iterations of the Hessenberg sum and product.
\begin{defn} Given a sequence of ordinals $(\alpha_i)_{i}$ we define inductively:
	\begin{enumerate}
		\item $\bigoplus_{i<0} \alpha_i = 0$,
		\item  $\bigoplus_{i<\beta+1} \alpha_i = (\bigoplus_{i<\beta} \alpha_i) \oplus  \alpha_{\beta}$
		\item  $\bigoplus_{i<\lambda} \alpha_i = \sup_{\beta < \lambda} \bigoplus_{i<\beta} \alpha_i$ for $\lambda$ limit. 
	\end{enumerate}    
\end{defn}
The paper \citep{Lipparini2018} contains some comparison results between $\sum_{i<\beta}$ and $\bigoplus_{i<\beta}$. 
Similarly we define the transfinite iteration of the Hessenberg product.
\begin{defn}  Given a sequence of ordinals $(\alpha_i)_{i}$ we define inductively:
	\begin{enumerate}
		\item $\bigodot_{i<0} \alpha_i = 1$,
		\item  $\bigodot_{i<\beta+1} \alpha_i = (\bigodot_{i<\beta} \alpha_i) \odot  \alpha_{\beta}$
		\item  $\bigodot_{i<\lambda} \alpha_i = \sup_{\beta < \lambda} \bigodot_{i<\beta} \alpha_i$ for $\lambda$ limit. 
	\end{enumerate}   
\end{defn}

\begin{defn} Given two ordinals $\alpha$ and $\beta$ we define $\cexp{\alpha}{\beta} = \bigodot_{i<\beta} \alpha$.  
\end{defn}

\begin{prop} \label{prop:powers-of-n}
	If $n<\omega$, then $\cexp{n}{\gamma} = n^{\gamma}$ for every $\gamma\in \on$. 
\end{prop}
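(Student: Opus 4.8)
The plan is to prove the statement by transfinite induction on $\gamma$, the one nontrivial ingredient being a structural fact about ordinary ordinal exponentiation with a finite base. Write $\gamma = \omega\mu + k$ with $k<\omega$ (the unique such representation). The auxiliary claim is: \emph{for $2\le n<\omega$ one has $n^\gamma = \omega^\mu\cdot n^k$}; in particular the Cantor normal form of $n^\gamma$ has a single term $\omega^\mu m$ with $1\le m<\omega$, and $n^\gamma$ is a finite ordinal precisely when $\mu=0$. I would prove this by a separate transfinite induction on $\gamma$. At a successor, $n^{\gamma+1}=n^\gamma\cdot n = \omega^\mu n^k\cdot n = \omega^\mu n^{k+1}$, using associativity of ordinal multiplication (and $\gamma+1 = \omega\mu + (k+1)$). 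At a limit $\lambda$ one has $\lambda=\omega\mu$ with $\mu\ge 1$; splitting into the cases ``$\mu$ a successor'' and ``$\mu$ a limit'' and using the induction hypothesis for smaller ordinals, one computes $n^\lambda=\sup_{\delta<\lambda}n^\delta = \omega^\mu$ directly — the crucial point being that $\sup_{k<\omega}n^k=\omega$, so that the finite factors $n^k$ get absorbed into a bump of the exponent.

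Granting the auxiliary claim, the main statement follows by transfinite induction on $\gamma$. For $\gamma=0$ both sides equal $1$. For the successor step, $\cexp n{\gamma+1} = \cexp n\gamma\odot n = n^\gamma\odot n$ by the induction hypothesis; since the Cantor normal form of $n^\gamma$ is the single term $\omega^\mu m$, the definition of the Hessenberg product gives $n^\gamma\odot n = \omega^{\mu\oplus 0}(m n) = \omega^\mu(mn) = \omega^\mu m\cdot n = n^\gamma\cdot n = n^{\gamma+1}$, where the middle equality is once more associativity of ordinal multiplication. For a limit $\lambda$, the definitions of $\bigodot_{i<\lambda}$ and of ordinal exponentiation at limits give $\cexp n\lambda = \sup_{\delta<\lambda}\cexp n\delta = \sup_{\delta<\lambda}n^\delta = n^\lambda$, again by the induction hypothesis. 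The remaining cases $n=0$ and $n=1$ are immediate from the definitions (for $n=1$ every $\cexp 1\gamma$ equals $1$, as does $1^\gamma$).

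I expect the auxiliary claim — that $n^\gamma$ has a one-term Cantor normal form — to be the only real obstacle: it is exactly what is needed to make the successor step go through, because in general $\alpha\odot n$ multiplies \emph{every} Cantor coefficient of $\alpha$ by $n$ whereas $\alpha\cdot n$ multiplies only the leading one (for instance $(\omega+1)\odot 2 = \omega\cdot 2 + 2$ while $(\omega+1)\cdot 2 = \omega\cdot 2 + 1$). Thus the argument genuinely depends on the special position occupied by the powers $n^\gamma$ among all ordinals, and the slightly delicate part of establishing that is the supremum computation at the limit stages of the auxiliary induction.
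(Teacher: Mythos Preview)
Your argument is correct. The paper, however, proceeds quite differently: it writes $\gamma=\omega\beta+k$, notes that $n^{\omega\beta+k}=\omega^\beta n^k$ (from $n^\omega=\omega$), and then simply cites \cite[Lemma~3.6]{Altman2017} for the equality $\cexp{n}{\omega\beta+k}=\omega^\beta n^k$. So both approaches rest on the same structural fact about ordinary ordinal exponentiation with finite base (your auxiliary claim), but the paper outsources the computation of $\cexp{n}{\gamma}$ to an external reference, whereas you supply a self-contained transfinite induction. The key insight you isolate---that for a single-term Cantor normal form $\omega^\mu m$ the Hessenberg product $\omega^\mu m\odot n$ coincides with the ordinary product $\omega^\mu m\cdot n$---is exactly what makes the successor step work, and it is presumably buried inside Altman's lemma. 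Your route is longer but entirely elementary; the paper's is shorter at the cost of a citation.
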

\begin{proof} We can write $\gamma = \omega \beta + k$ with $\beta\in \on$ and $k<\omega$. Since $n<\omega$, $n^\omega = \omega$, and therefore $n^{\omega \beta + k} = \omega^\beta n^k$. On the other hand by \cite[Lemma 3.6]{Altman2017} we have
	$\cexp{n}{\omega \beta + k} =\omega^\beta n^k 
	=n^{\omega \beta + k}$, thus concluding the proof.  
\end{proof}

\begin{lem} \label{lem:natural-sum} If $\alpha \geq \beta$, then 
	$\alpha\oplus\beta\leq \alpha+\beta 2$. 
\end{lem}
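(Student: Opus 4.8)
The plan is to reduce everything to a short computation with Cantor normal forms. Since $\alpha\oplus 0 = \alpha \leq \alpha + 0$, I would dispose of the case $\beta = 0$ at once and assume $\beta > 0$. First I would fix a common decreasing sequence of exponents $\gamma_0 > \gamma_1 > \cdots > \gamma_{k-1}$ and write $\alpha = \sum_{i<k}\omega^{\gamma_i}m_i$ and $\beta = \sum_{i<k}\omega^{\gamma_i}n_i$ with $m_i,n_i\in\N$ (allowing zero coefficients), exactly as in the definition of $\oplus$, so that $\alpha\oplus\beta = \sum_{i<k}\omega^{\gamma_i}(m_i+n_i)$. Let $p$ be the least index with $n_p > 0$; then $\omega^{\gamma_p}n_p$ is the leading term of $\beta$, one has $n_i = 0$ for $i<p$, and I would set $P := \sum_{i<p}\omega^{\gamma_i}m_i$ for the part of $\alpha$ lying above $\omega^{\gamma_p}$.

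Next I would carry out the ordinary sum. Using that $\xi + \omega^{\gamma_p} = \omega^{\gamma_p}$ for every $\xi < \omega^{\gamma_p}$ (additive indecomposability of $\omega^{\gamma_p}$), the terms of $\alpha$ of exponent below $\gamma_p$ are absorbed when $\beta$ is added on the right, giving $\alpha + \beta = P + \omega^{\gamma_p}(m_p+n_p) + \sum_{p<i<k}\omega^{\gamma_i}n_i$, and adding $\beta$ once more and absorbing again yields $\alpha + \beta\cdot 2 = P + \omega^{\gamma_p}(m_p+2n_p) + \sum_{p<i<k}\omega^{\gamma_i}n_i$. On the other side, since $n_i = 0$ for $i<p$, the Hessenberg sum is $\alpha\oplus\beta = P + \omega^{\gamma_p}(m_p+n_p) + S$ where $S := \sum_{p<i<k}\omega^{\gamma_i}(m_i+n_i)$.

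Finally I would cancel the common summand $P$ on the left (left cancellation for ordinal addition), which reduces the claim to $\omega^{\gamma_p}(m_p+n_p) + S \leq \omega^{\gamma_p}(m_p+2n_p) + \sum_{p<i<k}\omega^{\gamma_i}n_i$. Here $S < \omega^{\gamma_p}$, being a finite sum of terms $\omega^{\gamma}m$ with $\gamma < \gamma_p$ and $m\in\N$ (vacuously if $p = k-1$); and $n_p\geq 1$ forces $m_p+n_p \leq m_p+2n_p - 1$. Hence the left-hand side is at most $\omega^{\gamma_p}(m_p+2n_p-1) + S < \omega^{\gamma_p}(m_p+2n_p-1) + \omega^{\gamma_p} = \omega^{\gamma_p}(m_p+2n_p)$, which is at most the right-hand side, completing the argument.

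I do not anticipate a genuine obstacle: the whole proof is routine ordinal arithmetic, and the only points that deserve an explicit word are the absorption identity and the left-cancellation law for ordinal sums. I would also remark in passing that the hypothesis $\alpha\geq\beta$ is never actually used — the inequality holds for all ordinals $\alpha,\beta$ — but this weaker form is all that will be needed in the sequel.
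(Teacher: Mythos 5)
Your argument is correct and follows essentially the same route as the paper: both work from Cantor normal forms and the absorption property of ordinal addition, the only difference being that the paper bounds $\alpha\oplus\beta$ directly by decomposing it into three pieces (one $\leq\alpha$, one $\leq\beta$, one $<\omega^{\beta_0}\leq\beta$) while you compute $\alpha+\beta\cdot 2$ in closed form and compare after cancelling the common head $P$. Your closing observation is also accurate --- neither proof actually uses the hypothesis $\alpha\geq\beta$, so the inequality holds for all ordinals.
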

\begin{proof}
	We can assume $\beta>0$. Let $\alpha=\sum_{i<k}\omega^{\alpha_{i}}m_{i}$ and $\beta=\sum_{j<l}\omega^{\beta_{j}}n_{j}$ be Cantor normal forms. For some $i_0 \leq k$, $\alpha\oplus\beta$ has the form $\sum_{i< i_{0}}\omega^{\alpha_{i}}m_{i}+\omega^{\beta_{0}}n_{0}+\rho$ with 
	$\rho<\omega^{\beta_{0}}\leq \beta$. Since $\omega^{\beta_0} n_0 \leq \beta$, we obtain $\alpha \oplus \beta \leq \alpha + \beta + \beta = \alpha + \beta 2$. 
\end{proof}
\begin{lem}  \label{lem:bound-on-nat-sum}
	$\alpha \beta \; \leq \; \bigoplus_{i<\beta}\alpha \; \leq \; \alpha 2\beta$. 
\end{lem}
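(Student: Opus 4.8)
The plan is to prove the two inequalities separately by transfinite induction on $\beta$, the upper bound relying on \prettyref{lem:natural-sum} and on the lower bound being established first.

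For the lower bound $\alpha\beta\leq\bigoplus_{i<\beta}\alpha$, I would first recall the elementary fact that $\gamma\oplus\delta\geq\gamma+\delta$ for all ordinals $\gamma,\delta$: this is immediate from Cantor normal forms, since the ordinary sum discards the tail terms of $\gamma$ whose exponent is below the leading exponent of $\delta$, while the Hessenberg sum retains all of them. By transfinite induction on $\beta$ this yields $\bigoplus_{i<\beta}\alpha\geq\sum_{i<\beta}\alpha$; the successor step reads $\bigoplus_{i<\beta+1}\alpha=(\bigoplus_{i<\beta}\alpha)\oplus\alpha\geq(\bigoplus_{i<\beta}\alpha)+\alpha\geq(\sum_{i<\beta}\alpha)+\alpha=\sum_{i<\beta+1}\alpha$, and limit ordinals are handled directly because both sides are defined as suprema. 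Finally $\sum_{i<\beta}\alpha=\alpha\beta$ by an easy induction: this is the recursive definition at successors, and at a limit $\lambda$ one has $\sup_{\gamma<\lambda}\alpha\gamma=\alpha\lambda$.

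For the upper bound $\bigoplus_{i<\beta}\alpha\leq\alpha2\beta$ I would again induct on $\beta$. The cases $\beta=0$ and $\beta=1$ are trivial ($\alpha\leq\alpha2$). For the successor step with $\beta\geq1$, the lower bound already proved gives $\bigoplus_{i<\beta}\alpha\geq\alpha\beta\geq\alpha$, so \prettyref{lem:natural-sum} applies with first argument $\bigoplus_{i<\beta}\alpha$ and yields $\bigoplus_{i<\beta+1}\alpha=(\bigoplus_{i<\beta}\alpha)\oplus\alpha\leq\bigoplus_{i<\beta}\alpha+\alpha2\leq\alpha2\beta+\alpha2=\alpha2(\beta+1)$. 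For a limit $\lambda$, $\bigoplus_{i<\lambda}\alpha=\sup_{\beta<\lambda}\bigoplus_{i<\beta}\alpha\leq\sup_{\beta<\lambda}\alpha2\beta=\alpha2\lambda$, using that $\{2\beta:\beta<\lambda\}$ is cofinal in $\lambda$, so this last supremum is $\alpha\lambda=\alpha2\lambda$.

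The only mildly delicate point is the bookkeeping in the successor step of the upper bound, where one needs $\bigoplus_{i<\beta}\alpha\geq\alpha$ in order to orient \prettyref{lem:natural-sum} correctly; this is precisely the reason for proving the lower bound first. Everything else is routine transfinite induction.
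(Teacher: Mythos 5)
Your proof is correct, and it uses the same machinery as the paper's: a transfinite induction based on \prettyref{lem:natural-sum}, with the fact that $\oplus$ dominates $+$ giving the lower bound. The only organizational difference is that you prove the two inequalities in separate inductions, using the lower bound to ensure $\bigoplus_{i<\beta}\alpha\ge\alpha$ before invoking \prettyref{lem:natural-sum}; the paper instead runs a single induction and, at the successor step, first bounds $(\bigoplus_{i<\gamma}\alpha)\oplus\alpha$ by $\alpha 2\gamma\oplus\alpha$ via the inductive hypothesis, where the hypothesis $\alpha 2\gamma\ge\alpha$ needed for \prettyref{lem:natural-sum} is immediate — so it never needs the lower bound as a prerequisite. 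Both routes are sound; yours just does a bit of extra bookkeeping.
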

\begin{proof}
	By induction on $\beta$ based on \prettyref{lem:natural-sum}. The case when $\beta$ is zero or a limit ordinal follows at once from the induction hypothesis. If $\beta = \gamma+1$, then $\alpha(\gamma+1) \leq \bigoplus_{i<\gamma+1} \alpha = (\bigoplus_{i<\gamma} \alpha) \oplus \alpha \leq \alpha 2 \gamma \oplus \alpha \leq \alpha 2 \gamma + \alpha 2 = \alpha2 (\gamma+1)$, where we used \prettyref{lem:natural-sum} and the induction hypothesis. 
\end{proof}

\begin{cor}\label{cor:iterated-sum}
	If $\lambda$ is limit, then $\bigoplus_{i<\lambda}\alpha =  \alpha\lambda$.
\end{cor}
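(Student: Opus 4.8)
The plan is to read off the equality directly from \prettyref{lem:bound-on-nat-sum}. Instantiating that lemma at $\beta=\lambda$ gives the sandwich $\alpha\lambda \le \bigoplus_{i<\lambda}\alpha \le \alpha 2\lambda$, so everything reduces to checking that the two outer bounds agree, i.e.\ that $\alpha 2\lambda = \alpha\lambda$ whenever $\lambda$ is a limit ordinal. By associativity of ordinal multiplication this amounts to $\alpha\cdot(2\lambda)=\alpha\cdot\lambda$, hence it suffices to observe that $2\lambda=\lambda$ for $\lambda$ limit. This is the elementary fact that left-multiplication by a finite number fixes every limit ordinal: since $\lambda$ is a limit, its Cantor normal form has no finite term, so all exponents occurring in it are $\ge 1$, and $n\cdot\omega^{\gamma}=\omega^{\gamma}$ for $\gamma\ge 1$; combined with the left distributivity $n\cdot(\mu+\nu)=n\mu+n\nu$ this yields $2\lambda=\lambda$.

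Alternatively, one can argue through the defining clause $\bigoplus_{i<\lambda}\alpha=\sup_{\beta<\lambda}\bigoplus_{i<\beta}\alpha$: by \prettyref{lem:bound-on-nat-sum} the $\beta$-th partial sum lies between $\alpha\beta$ and $\alpha 2\beta$, so $\sup_{\beta<\lambda}\alpha\beta \le \bigoplus_{i<\lambda}\alpha \le \sup_{\beta<\lambda}\alpha 2\beta$, and these two suprema equal $\alpha\lambda$ and $\alpha 2\lambda$ respectively by the definition of the ordinal product at a limit, which again coincide by the previous observation. There is essentially no obstacle in this corollary; the only point to watch is that the hypothesis ``$\lambda$ limit'' is genuinely used, precisely in the step $2\lambda=\lambda$, which of course fails for successor $\lambda$.
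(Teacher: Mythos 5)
Your proof is correct and matches the paper's: both apply \prettyref{lem:bound-on-nat-sum} at $\beta=\lambda$ and then observe that $2\lambda=\lambda$ for limit $\lambda$ to collapse the sandwich. The paper simply states this as a one-liner without spelling out the justification of $2\lambda=\lambda$, which you do.
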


\begin{proof}
	If $\beta$ is limit, then $2\beta = \beta$, so  we can conclude by \prettyref{lem:bound-on-nat-sum}. 
\end{proof}

Let $\alpha$ be an ordinal. We say that $\alpha$ is {\bf additively closed} if the sum of two ordinals less then $\alpha$ is less than $\alpha$. Similarly, $\alpha$ is {\bf multiplicatively closed} if the product of two ordinals less than $\alpha$ is less than $\alpha$. We obtain an equivalent definition using the Hessenberg sum and product. The additively closed ordinals $>0$ are the ordinals of the form $\omega^\delta$ for some $\delta$; the multiplicatively closed ordinals $>1$ are the ordinals of the form $\omega^{\omega^\delta}$ for some $\delta$ \cite{Sierpinski1958}. 

\begin{prop}\label{prop:cexp}
	If $\alpha\in \on$ and $\lambda$ is a limit ordinal, then $\cexp{\alpha}{\lambda} = \alpha^\lambda$. Moreover $\alpha^\lambda$ is additively closed. 
\end{prop}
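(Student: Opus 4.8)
The plan is to reduce the identity $\cexp{\alpha}{\lambda}=\alpha^{\lambda}$ to the case where the base is a pure power $\omega^{a}$ of $\omega$ --- for which $\cexp{\omega^{a}}{\lambda}$ can be computed outright via \prettyref{cor:iterated-sum} --- and then to sandwich an arbitrary $\alpha$ between two consecutive such powers. First I would dispose of the easy cases. If $\alpha\leq 1$ then both $\cexp{\alpha}{\lambda}$ and $\alpha^{\lambda}$ equal $1$, which is additively closed; if $2\leq\alpha<\omega$ then \prettyref{prop:powers-of-n} already gives $\cexp{\alpha}{\lambda}=\alpha^{\lambda}$. For the additive closure of $\alpha^{\lambda}$ when $\alpha\geq 2$, observe that any $\mu,\nu<\alpha^{\lambda}$ satisfy $\mu,\nu<\alpha^{\gamma}$ for a common $\gamma<\lambda$, so $\mu+\nu<\alpha^{\gamma}\cdot 2\leq\alpha^{\gamma+1}\leq\alpha^{\lambda}$ (using $\gamma+1<\lambda$); hence $\alpha^{\lambda}$ is additively closed. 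It thus remains to prove $\cexp{\alpha}{\lambda}=\alpha^{\lambda}$ for $\alpha\geq\omega$.

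The key computation is that, for every ordinal $a$, one has $\cexp{\omega^{a}}{\beta}=\omega^{\bigoplus_{i<\beta}a}$ for all $\beta$; this is a transfinite induction on $\beta$ whose successor step uses $\omega^{x}\odot\omega^{y}=\omega^{x\oplus y}$ (immediate from the definition of $\odot$ on single-term Cantor normal forms) and whose limit step uses continuity of $\gamma\mapsto\omega^{\gamma}$. Specialising to $\beta=\lambda$ and applying \prettyref{cor:iterated-sum}, which gives $\bigoplus_{i<\lambda}a=a\lambda$, we obtain $\cexp{\omega^{a}}{\lambda}=\omega^{a\lambda}$. I would also record the elementary identity $\gamma\cdot\lambda=\omega^{\ell(\gamma)}\cdot\lambda$, valid for any $\gamma\geq 1$ and any limit $\lambda$, where $\ell(\gamma)$ denotes the leading exponent of the Cantor normal form of $\gamma$: writing $\lambda=\omega\mu$ and using $\gamma\cdot\omega=\omega^{\ell(\gamma)+1}$ together with associativity gives $\gamma\lambda=(\gamma\omega)\mu=\omega^{\ell(\gamma)+1}\mu=\omega^{\ell(\gamma)}\lambda$.

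Now let $\alpha\geq\omega$ and set $c=\ell(\alpha)\geq 1$, so that $\omega^{c}\leq\alpha<\omega^{c+1}$; since $\ell(c)=\ell(c+1)$, the identity just recorded gives $(c+1)\lambda=c\lambda$. Using that $\odot$ is monotone in each argument --- hence $\cexp{\cdot}{\beta}$ is monotone in the base, and therefore so is $\cexp{\cdot}{\lambda}=\sup_{\beta<\lambda}\cexp{\cdot}{\beta}$ --- together with the analogous monotonicity of ordinary exponentiation, this gives
\[
\omega^{c\lambda}=\cexp{\omega^{c}}{\lambda}\leq\cexp{\alpha}{\lambda}\leq\cexp{\omega^{c+1}}{\lambda}=\omega^{(c+1)\lambda}=\omega^{c\lambda},
\]
\[
\omega^{c\lambda}=(\omega^{c})^{\lambda}\leq\alpha^{\lambda}\leq(\omega^{c+1})^{\lambda}=\omega^{(c+1)\lambda}=\omega^{c\lambda},
\]
whence $\cexp{\alpha}{\lambda}=\alpha^{\lambda}=\omega^{c\lambda}$, which is additively closed as a power of $\omega$.

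I expect the main obstacle to be the key computation $\cexp{\omega^{a}}{\lambda}=\omega^{a\lambda}$: the entire argument hinges on the fact that the transfinite Hessenberg power of a pure power of $\omega$ collapses at limit stages, which is exactly where \prettyref{cor:iterated-sum} is used. A minor point to watch is that $\alpha<\omega^{c+1}$ is a strict inequality, but this causes no trouble since both ends of each of the two sandwiches already coincide.
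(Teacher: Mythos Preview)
Your proof is correct and follows essentially the same route as the paper: handle finite $\alpha$ via \prettyref{prop:powers-of-n}, establish $\cexp{(\omega^{a})}{\lambda}=\omega^{a\lambda}$ from the inductive formula $\cexp{(\omega^{a})}{\beta}=\omega^{\bigoplus_{i<\beta}a}$ together with \prettyref{cor:iterated-sum}, and then sandwich a general $\alpha\geq\omega$ between $\omega^{c}$ and $\omega^{c+1}$ using $(c+1)\lambda=c\lambda$. The only differences are cosmetic: you give a separate direct argument for additive closure and a more explicit justification of $(c+1)\lambda=c\lambda$, while the paper compresses the sandwich into a single chain of inequalities.
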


\begin{proof} 
	The case $\alpha<\omega$ follows from \prettyref{prop:powers-of-n}.
	Assume $\alpha \geq \omega$ and consider first the special case $\alpha = \omega^\gamma$. For every $\beta$ it is easy to verify by induction that  
	$\cexp{({\omega^\gamma})}{\beta} 
	=
	\bigodot_{i<\beta} \omega^{\gamma} 
	=
	\omega^{\bigoplus_{i<\beta} \gamma}$. Now take $\beta = \lambda$. 
	Since $\lambda$ is limit, by \prettyref{cor:iterated-sum}, $\bigoplus_{i<\lambda} \gamma = \gamma\lambda$, so 
	$\cexp{({\omega^\gamma})}{\lambda} 
	=
	\omega^{\gamma \lambda}$. 
	
	For a general $\alpha \geq \omega$, let $\delta>0$ be such that $\omega^\delta \leq \alpha < \omega^{\delta+1}$. 
	Since $\lambda$ is limit, $(\delta+1)\lambda = \delta \lambda$. 
	The result now follows from the inequalities $\cexp{\alpha}{\lambda} \leq \cexp{(\omega^{\delta+1})}{\lambda} = \omega^{(\delta+1)\lambda} = \omega^{\delta \lambda} \leq \alpha^\lambda \leq \cexp{\alpha}{\lambda}$. 
\end{proof}

\begin{cor}\label{cor:cexp}
	For $\beta\in \on$, let $\beta =\lambda + k$ with $\lambda$ a limit ordinal or zero and $k<\omega$. Then $\cexp{\alpha}{\beta} = \alpha^\lambda \odot \cexp{\alpha}{k}$. 
\end{cor}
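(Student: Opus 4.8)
The plan is to split the transfinite product $\cexp{\alpha}{\beta}=\bigodot_{i<\beta}\alpha$ into a ``limit part'' and a finite tail, and then apply \prettyref{prop:cexp} to the limit part. First I would record the elementary tail-splitting identity $\cexp{\alpha}{\lambda+k}=\cexp{\alpha}{\lambda}\odot\cexp{\alpha}{k}$, valid for an arbitrary ordinal $\lambda$ and $k<\omega$, proved by induction on $k$. For $k=0$ both sides reduce to $\cexp{\alpha}{\lambda}$, since $\cexp{\alpha}{0}=\bigodot_{i<0}\alpha=1$ is a unit for $\odot$. For the step from $k$ to $k+1$, observe that $\lambda+(k+1)=(\lambda+k)+1$ is a successor ordinal, so the successor clause in the definition of the transfinite product gives $\cexp{\alpha}{\lambda+k+1}=\cexp{\alpha}{\lambda+k}\odot\alpha$; applying the induction hypothesis and then associativity of the Hessenberg product (a standard fact, see \cite{Sierpinski1958}) yields $\cexp{\alpha}{\lambda+k+1}=(\cexp{\alpha}{\lambda}\odot\cexp{\alpha}{k})\odot\alpha=\cexp{\alpha}{\lambda}\odot(\cexp{\alpha}{k}\odot\alpha)=\cexp{\alpha}{\lambda}\odot\cexp{\alpha}{k+1}$, the last equality being again the successor clause of the definition.

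It then remains only to identify the limit part. If $\lambda$ is a genuine limit ordinal, \prettyref{prop:cexp} gives $\cexp{\alpha}{\lambda}=\alpha^\lambda$; and if $\lambda=0$ the identity $\cexp{\alpha}{\lambda}=1=\alpha^0=\alpha^\lambda$ is trivial. Substituting this into the tail-splitting identity gives $\cexp{\alpha}{\beta}=\cexp{\alpha}{\lambda+k}=\alpha^\lambda\odot\cexp{\alpha}{k}$, which is the assertion.

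I do not expect any real obstacle here: the only point requiring attention is that the transfinite product $\bigodot_{i<\beta}$ is defined by a recursion that takes a binary $\odot$ at successor stages and a supremum at limit stages, so peeling off a finite tail is legitimate precisely because $\lambda+1,\lambda+2,\dots$ are all successors (no limit stage is crossed once one is past $\lambda$) and because $\odot$ is associative. All the genuine arithmetic content sits in \prettyref{prop:cexp}; the corollary is just the bookkeeping reformulation that the subsequent sections will use.
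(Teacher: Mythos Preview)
Your proof is correct and matches what the paper intends: the corollary is stated there without proof, as an immediate consequence of \prettyref{prop:cexp} together with the obvious tail-splitting $\cexp{\alpha}{\lambda+k}=\cexp{\alpha}{\lambda}\odot\cexp{\alpha}{k}$, which is exactly what you spell out.
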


\section{Well ordered subsets of ordered groups}
The Hessenberg sum and product can be characterized as follows. Consider disjoint well ordered sets $A$ and $B$ of order type $\alpha$ and $\beta$ respectively. By \citep{Carruth1942} or \citep{DeJongh1977} the Hessenberg sum $\alpha\oplus \beta$ is the sup of all ordinals $\gamma$ such that one can extend the given partial order on $A\cup B$ to a total order of order type $\gamma$; the Hessenberg product $\alpha \odot \beta$ is the sup of all ordinals $\gamma$ such that one can extend the componentwise partial order on $A\times B$ to a total order of order type $\gamma$. By the cited papers, the sups are achieved.   
An immediate consequence of the above characterization is the following: 
\begin{fact}\label{fact:binary}
	Let $X = (X,<)$ be a totally ordered set and let $A, B\subseteq X$ be well ordered subsets. We have:
	\begin{enumerate}
		\item $A\cup B$ is well ordered and $|A\cup  B|\leq |A|\oplus |B|$ 
		\item Let $f:X\times X \to X$ be a binary function which is weakly increasing in both arguments and let $f(A,B) := \{f(a,b):a\in A, b\in B\}$. Then $f(A,B)$ is well ordered and $|f(A,B)|\leq |A| \odot |B|$.
	\end{enumerate}
\end{fact}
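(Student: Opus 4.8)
The plan is to deduce both items from the characterization of $\oplus$ and $\odot$ recalled above, by means of a single uniform observation: if $(P,\sqsubset)$ is a well order and $g\colon (P,\sqsubset)\to Y$ is a weakly increasing surjection onto a linearly ordered set $Y$ (i.e.\ $p\sqsubset q$ implies $g(p)\le g(q)$, and $g$ is onto), then $Y$ is well ordered and $|Y|\le|P|$. Indeed, choose a section $s\colon Y\to P$ of $g$; for $y<y'$ in $Y$ we cannot have $s(y')\sqsubseteq s(y)$, since that would give $y'=g(s(y'))\le g(s(y))=y$; as $\sqsubset$ is total it follows that $s(y)\sqsubset s(y')$, so $s$ is a strictly increasing map of $Y$ into the well order $P$, whence $Y$ is well ordered with $|Y|\le|P|$. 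Thus it suffices, in each case, to exhibit a well order $P$ extending the relevant partial order — so that the cited results bound $|P|$ by the appropriate Hessenberg operation — together with such a surjection $g$ onto $A\cup B$, respectively onto $f(A,B)$.

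For (1): realize $A$ and $B$ disjointly as $A\times\{0\}$ and $B\times\{1\}$ inside $X\times\{0,1\}$, with each copy ordered as in $X$ and the two copies left incomparable; this is a copy of the partial order on a disjoint union of well orders of types $|A|$ and $|B|$ whose linear extensions compute $|A|\oplus|B|$. Define $(c,i)\sqsubset (c',i')$ to mean $c<c'$ in $X$, or $c=c'$ and $i<i'$. Then $\sqsubset$ is a total order extending the partial order just described, hence by \citep{Carruth1942, DeJongh1977} a well order of order type at most $|A|\oplus|B|$, and $g\colon (c,i)\mapsto c$ is a weakly increasing surjection onto $A\cup B$. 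The observation above gives that $A\cup B$ is well ordered with $|A\cup B|\le|A|\oplus|B|$.

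For (2): take $A\times B$ with the componentwise partial order, whose linear extensions compute $|A|\odot|B|$, and fix any total order $\sqsubset_0$ extending it. Define $p\sqsubset q$ to mean $f(p)<f(q)$ in $X$, or $f(p)=f(q)$ and $p\sqsubset_0 q$. A short check shows $\sqsubset$ is a total order extending the componentwise order: if $p$ is componentwise below $q$ then $f(p)\le f(q)$ since $f$ is weakly increasing in both arguments, and in the tie case $p\sqsubset_0 q$ because $\sqsubset_0$ extends that order. Hence $(A\times B,\sqsubset)$ is a well order of order type at most $|A|\odot|B|$, and $f\colon (A\times B,\sqsubset)\to f(A,B)$ is by construction a weakly increasing surjection. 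Again the observation gives that $f(A,B)$ is well ordered with $|f(A,B)|\le|A|\odot|B|$.

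I do not anticipate a real obstacle: beyond the cited characterization of $\oplus$ and $\odot$, the only external ingredient is the classical fact that a linear extension of the disjoint union, or of the product, of two well orders is itself a well order — so that ``order type at most $|A|\oplus|B|$'', resp.\ ``at most $|A|\odot|B|$'', refers to \emph{every} such extension, not merely the optimal one. The remaining verifications (transitivity and totality of $\sqsubset$, weak monotonicity of $g$ and $f$) are routine case checks, so the write-up should be short.
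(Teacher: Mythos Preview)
Your argument is correct and is precisely the natural way to unpack the paper's one-line justification: the paper gives no detailed proof but simply states that the fact is ``an immediate consequence'' of the cited characterization of $\oplus$ and $\odot$ as maxima of order types of linear extensions. Your uniform observation about weakly increasing surjections from a well order, together with your explicit choice of linear extensions on $A\sqcup B$ and $A\times B$, makes rigorous exactly what the paper leaves to the reader.
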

Given two sets $A$ and $B$ of Skolem functions we write: $A+B$ for the set of all sums $f+g$ with $f\in A$ and $g\in B$; $AB$ for the set of all products $fg$ with $f\in A$ and $g\in B$;  $A^B$ for the set of all functions of the form $f^g$ with $f\in A$ and $g\in B$. 
We write $A/\!\asymp$ for the ordered set of all $\asymp$ classes of elements of $A$, and similarly for $A/\!\sim$. 

\begin{cor} \label{cor:binary-bounds}Let $A$ and $B$ be sets of Skolem functions. Then:
	\begin{enumerate}
		\item $A\cup B$ has of order type $\leq |A| \oplus |B|$. 
		\item  $A+B,\; AB$ and $A^B$ have order type $\leq |A|\odot |B|$. 
		\item $AB/\!\asymp$ has order type $\leq |A/\! \asymp| \odot |B/\! \asymp |$. 
	\end{enumerate}
\end{cor}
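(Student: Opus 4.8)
The plan is to obtain all three bounds as instances of \prettyref{fact:binary}, choosing in each case the right ambient totally ordered set and the right weakly increasing binary function. Item~(1) is literally part~(1) of \prettyref{fact:binary}. For item~(2) I would take $X$ to be the set of germs at $+\infty$ of Skolem functions, totally ordered by $<$ (it generates an ordered field by the results cited in the introduction). Each of the operations $(f,g)\mapsto f+g$, $(f,g)\mapsto fg$, $(f,g)\mapsto f^g$ is then a binary function $X\times X\to X$, since $\Sk$ is closed under these operations, so it remains only to check that each is weakly increasing in both arguments. For $f+g$ and $fg$ this is clear from the positivity of Skolem functions. For $f^g$ I would first record, by an immediate induction on the generation of $\Sk$ (the generators $1$ and $\x$ are eventually $\geq 1$, and being eventually $\geq 1$ is preserved when forming $f+g$, $fg$ and $f^g$), that every Skolem function is eventually $\geq 1$; granting this, $f\mapsto f^g$ is weakly increasing because the germ of $g$ is positive, and $g\mapsto f^g$ is weakly increasing because the germ of $f$ is $\geq 1$. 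Part~(2) of \prettyref{fact:binary} then yields $|A+B|,\,|AB|,\,|A^B|\leq |A|\odot|B|$.

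For item~(3) the crucial point is that $\asymp$ is a congruence for multiplication: if $a\leq ma'$ and $b\leq nb'$ with $m,n\in\N$ (all germs in sight being positive), then $ab\leq mn\,a'b'$, so $\preceq$, and hence $\asymp$, is compatible with products. Consequently multiplication descends to a well-defined binary operation $\bar\cdot$ on the set $\Sk/\!\asymp$ of archimedean classes of Skolem functions, which is totally ordered (the archimedean classes of an ordered abelian group always are), and the same inequality shows $\bar\cdot$ is weakly increasing in both arguments. Since $\asymp$-classes are convex in $\Sk$, the sets $A/\!\asymp$ and $B/\!\asymp$ are well ordered subsets of $\Sk/\!\asymp$, and, in the notation of \prettyref{fact:binary}, $\bar\cdot(A/\!\asymp,\,B/\!\asymp)=AB/\!\asymp$. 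Applying part~(2) of \prettyref{fact:binary} with $X=\Sk/\!\asymp$ and the weakly increasing function $\bar\cdot$ gives $|AB/\!\asymp|\leq |A/\!\asymp|\odot|B/\!\asymp|$.

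I do not expect a genuine obstacle here: the corollary is essentially a repackaging of \prettyref{fact:binary}. The two places that need a moment of care are the eventual positivity ($\geq 1$) of Skolem functions, which is what makes $(f,g)\mapsto f^g$ weakly increasing and is needed for item~(2), and, for item~(3), the verification that the passage to archimedean classes preserves simultaneously the monotonicity of multiplication and the well-orderedness of the quotients (the latter via convexity of $\asymp$-classes), so that \prettyref{fact:binary} really does apply in $\Sk/\!\asymp$.
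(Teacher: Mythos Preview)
Your proposal is correct and follows essentially the same approach as the paper: both deduce all three items directly from \prettyref{fact:binary}, with item~(3) handled by observing that the $\asymp$-class of a product depends only on the $\asymp$-classes of the factors in a weakly increasing way. Your write-up simply makes explicit the monotonicity checks (notably that Skolem functions are eventually $\geq 1$, so $(f,g)\mapsto f^g$ is weakly increasing) that the paper leaves implicit under the word ``immediate''.
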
 
\begin{proof} The first two points are immediate from \prettyref{fact:binary}. 
	To prove point (3) we use again \prettyref{fact:binary} together with the observation that the $\asymp$-class of $fg$ depends only on the respective $\asymp$-classes of $f$ and $g$, and this dependence is weakly increasing in both arguments. 
\end{proof}

Given a subset $A \subset \Sk$, we write $\sum A$ for the set of finite non-empty sums of elements from $A$. 
We want to give an upper bound on $|\sum A|$. The definition of $\sum A$ can be given more generally for a subset $A$ of an ordered abelian group $G$, so it is convenient to work in this context. If $A$ is a well ordered subset of $G^{> 0}$, $\sum A$ is well ordered and \citet{Carruth1942} gave an upper bound on its order type in terms of the order type of $A$. In \prettyref{thm:finite-sums} we obtain a different bound which takes  into account the set of archimedean classes of $A$.

\begin{lem}
	\label{lem:different-classes}Let $(G,+,<)$ be an ordered abelian
	group and let $A\subseteq G^{>0}$ be a well ordered subset of order
	type $\alpha$.  Suppose all the elements of $A$ belong to distinct
	archimedean classes. Then the order type of $\sum A$ is $\leq\omega^{\alpha}$. 
\end{lem}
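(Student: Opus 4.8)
The plan is to exploit the fact that the elements of $A$ lie in distinct archimedean classes, so that a finite sum $a_{i_1}+\dots+a_{i_k}$ with $a_{i_1}>\dots>a_{i_k}$ is dominated by its largest term $a_{i_1}$, and more precisely behaves like a "base-$\omega$ digit expansion" indexed by the position of the summands in $A$. First I would fix the order-isomorphism $i\mapsto a_i$ from $\alpha$ onto $A$, so that each element of $A$ is tagged by an ordinal $i<\alpha$. Given a finite nonempty sum $s\in\sum A$, write it in the unique way $s = a_{i_1}+\dots+a_{i_k}$ with $i_1>\dots>i_k$ (uniqueness of the multiset of summands is automatic here since distinct archimedean classes force all the $a_i$ appearing to be distinct, and one checks a summand cannot be repeated — if it could, replace "sum" by the evident finite-multiset version and the argument goes through with $\omega$ replaced by $\omega$ still, since multiplicities are finite). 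Define $\Phi(s) = \omega^{i_1}+\dots+\omega^{i_k}\in\omega^\alpha$. The claim is that $\Phi$ is a strictly increasing map from $\sum A$ into $\omega^\alpha$, which immediately gives $|\sum A|\le\omega^\alpha$.

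The key step is monotonicity of $\Phi$. Suppose $s = a_{i_1}+\dots+a_{i_k}$ and $t = a_{j_1}+\dots+a_{j_l}$ with $i_1>\dots>i_k$ and $j_1>\dots>j_l$, and $s<t$ in $G$. I want $\Phi(s)<\Phi(t)$, i.e.\ $\omega^{i_1}+\dots+\omega^{i_k} < \omega^{j_1}+\dots+\omega^{j_l}$. The crucial observation is a \emph{leading-term lemma}: if the two index sequences share a common initial segment $i_1=j_1,\dots,i_{m}=j_{m}$ but differ at position $m+1$ (or one runs out), then the sign of $s-t$ is governed by the first point of disagreement, because the discrepancy there lies in a strictly higher archimedean class than everything following it. Concretely, writing $s = \sigma + a_{i_{m+1}}+\dots$ and $t=\sigma + a_{j_{m+1}}+\dots$ with common part $\sigma$, we have $s-t = (a_{i_{m+1}}+\dots) - (a_{j_{m+1}}+\dots)$, and this difference is $\asymp$ its dominant term, which is $a_{i_{m+1}}$ if $i_{m+1}>j_{m+1}$ (so $s>t$) and $a_{j_{m+1}}$ if $i_{m+1}<j_{m+1}$ (so $s<t$); the case where one sequence is exhausted first is similar, with the extra positive tail of the longer one dominating. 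This is exactly the same comparison rule that governs Cantor normal forms: $\omega^{i_1}+\dots+\omega^{i_k}$ versus $\omega^{j_1}+\dots+\omega^{j_l}$ is decided lexicographically by the first point of disagreement (after collapsing equal adjacent exponents into coefficients, which only happens here if a summand repeats). Hence $s<t\iff\Phi(s)<\Phi(t)$, so $\Phi$ is an order-embedding.

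I expect the main obstacle to be the bookkeeping in the leading-term lemma: one must handle, uniformly, the cases where the two sorted index-sequences agree on a prefix and then (i) disagree with $i_{m+1}>j_{m+1}$, (ii) disagree with $i_{m+1}<j_{m+1}$, (iii) one sequence is a proper prefix of the other. In each case the essential point is that a sum of elements of $A$ all with index $<i_{m+1}$ is strictly dominated ($\prec$) by $a_{i_{m+1}}$ — this uses that there are only finitely many such summands and that each lies in a strictly lower archimedean class than $a_{i_{m+1}}$, so their sum still does (a finite sum of elements each $\prec a_{i_{m+1}}$ is $\prec a_{i_{m+1}}$, since $\prec$-classes below a fixed class are closed under finite sums in an ordered abelian group). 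Once that domination fact is in hand, the sign computation and the matching Cantor-normal-form comparison are routine, and the bound $\omega^\alpha$ drops out; it is sharp in general, witnessed e.g.\ by $A=\{\,\text{distinct-scale "digits"}\,\}$ in a Hahn-type group.
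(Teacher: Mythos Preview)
Your approach is the same as the paper's: enumerate $A$ as $(a_i)_{i<\alpha}$, represent each element of $\sum A$ by the multiset of summands, and embed into $\omega^\alpha$ via a Cantor-normal-form map; the paper writes $x=\sum_{i<\alpha} a_i n_i$ with $n_i\in\N$ and sends $x$ to $\bigoplus_{i<\alpha}\omega^i n_i$, then simply asserts this is increasing (your leading-term lemma is the unpacked version of that assertion).

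The one slip is your claim that ``one checks a summand cannot be repeated'': $\sum A$ is the set of all finite sums from $A$, so $2a_i,\,3a_i,\ldots$ are certainly in $\sum A$, and your map $\Phi$ with strictly decreasing indices $i_1>\dots>i_k$ is not defined on them. Your parenthetical hedge (pass to multisets, send multiplicity $n$ of $a_i$ to a coefficient $n$ on $\omega^i$) is exactly the fix and is exactly what the paper does; just make that the primary definition rather than an afterthought.
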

\begin{proof}
	Let $(a_{i}:i<\alpha)$ be an increasing enumeration of $A$. Let
	$x\in \sum A$. Then $x$ can be written uniquely in the form $x=\sum_{i<\alpha}a_{i}n_{i}$
	where $n_{i}\in\N$ and $n_{i}=0$ for all but finitely many $i$.
	We associate to $x$ the ordinal $\bigoplus_{i<\alpha}\omega^{i}n_{i}$.
	This defines an increasing map from $\sum A$ to $\omega^{\alpha}$
	yielding the desired result. 
\end{proof}

\begin{lem}\label{lemma:archimedean}
	\label{lem:archimedean-case}Let $(G,+,<)$ be an ordered abelian
	group and let $A\subseteq G^{>0}$ be a well ordered subset of order
	type $\alpha \geq 2$. Suppose all the elements of $A$ belong to the same
	archimedean class. Then $$
	|\sum A| \leq  \alpha^{\omega}.$$ (If $|A| \leq 1$, clearly $|\sum A| \leq \omega$.) 
\end{lem}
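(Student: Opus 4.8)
The plan is to reduce the archimedean case to the "distinct classes" case of the previous lemma by a double induction on the Cantor normal form of $\alpha$ combined with the multiplicativity results for $\cexp{\alpha}{\beta}$ established in Section~3. First I would dispose of the trivial case $\alpha$ finite: if $A = \{a_0 < \dots < a_{k-1}\}$ all lie in one archimedean class, then every element of $\sum A$ is $\asymp a_{k-1}$ and is bounded by a finite multiple of $a_{k-1}$, so there is a natural number $N$ (coming from the fact that the ratios $a_i/a_{k-1}$ are bounded below by rationals — more carefully, one uses that the sums form a subset of $\{a_{k-1} q : q \in \Q^{>0}\}$ of bounded "complexity") with $|\sum A| \le \omega \le \alpha^\omega$; actually for $\alpha$ finite the sharper statement $|\sum A| \le \omega$ holds and is all we need downstream. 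For infinite $\alpha$ the point is that $\alpha^\omega = \cexp{\alpha}{\omega}$ by \prettyref{prop:cexp} and that $\alpha^\omega$ is additively closed, which will let me freely combine the contributions of an "initial block" of $A$ and a "tail block" of $A$.

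The main step is a decomposition of $A$ itself. Write the increasing enumeration $A = (a_i : i < \alpha)$ and split $\alpha = \lambda + k$ (or handle $\alpha = \omega^{\gamma_0}m_0 + \rho$ via the leading term of the Cantor normal form). The idea is: choose a cofinal-type threshold so that $A = A_0 \cup A_1$ where $A_0$ is a proper initial segment of order type $\alpha_0 < \alpha$ and $A_1$ is the remainder of order type $\alpha_1 < \alpha$ with $\alpha = \alpha_0 \oplus \alpha_1$ (or $\alpha_0 + \alpha_1$), recursing on the Cantor normal form. Then $\sum A \subseteq (\sum A_0) + (\sum A_1) \cup (\sum A_0) \cup (\sum A_1)$, so by \prettyref{cor:binary-bounds}(1)--(2), $|\sum A| \le (|\sum A_0| \odot |\sum A_1|) \oplus |\sum A_0| \oplus |\sum A_1|$. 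By the induction hypothesis each factor is $\le \alpha_j^\omega$, and since $\alpha_0, \alpha_1 < \alpha$ while $\alpha$ is infinite, one checks $\alpha_0^\omega \odot \alpha_1^\omega \le \alpha^\omega$ and the two summands are $\le \alpha^\omega$; additive closure of $\alpha^\omega$ then gives the bound. The delicate point in running this recursion is the base of the induction inside the Cantor normal form: when $\alpha = \omega^\gamma$ is additively closed, the splitting into two strictly smaller pieces fails (any proper initial segment has order type $< \omega^\gamma$, but the tail still has order type $\omega^\gamma$), so one instead takes a strictly increasing $\omega$-sequence of initial segments $A = \bigcup_n B_n$ with $|B_n| = \beta_n \nearrow \omega^\gamma$, writes $\sum A$ as an increasing union of the $\sum B_n$ together with cross-sums, and uses that $\sup_n \beta_n^\omega$ relates to $(\omega^\gamma)^\omega = \omega^{\gamma\omega}$ via \prettyref{prop:cexp} and \prettyref{cor:iterated-sum}.

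An alternative, possibly cleaner, route — and the one I would try first — is to pass to the quotient structure. Fix any $a \in A$; since all elements of $A$ are $\asymp a$, the ratios $a_i/a$ (computed in the divisible hull, or directly using that $f \asymp g$ iff $f/g$ tends to a nonzero real, in the Skolem-function setting) live in a fixed archimedean class, and dividing out we may assume $a = 1$, i.e. $A$ consists of elements of bounded archimedean class $\asymp 1$. The core combinatorial fact then needed is: a well-ordered set $A$ of positive reals (or positive elements all $\asymp 1$ in an ordered group) of order type $\alpha$ has $|\sum A| \le \alpha^\omega$. This is exactly the content of Carruth's classical theorem specialized to one archimedean class, and in our setting it can be proved by the induction sketched above; I expect the \textbf{main obstacle} to be precisely the additively-closed case $\alpha = \omega^\gamma$, where the naive "split into two smaller halves" argument breaks and one must instead argue via an $\omega$-indexed exhaustion and carefully invoke additive closure of $\alpha^\omega = \omega^{\gamma\omega}$ to absorb the countably many cross-terms without overshooting the bound.
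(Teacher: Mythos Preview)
Your inductive splitting approach has a genuine gap: the inequality you rely on, namely $\alpha_0^\omega \odot \alpha_1^\omega \le \alpha^\omega$ whenever $\alpha_0,\alpha_1 < \alpha$, is false. Take $\alpha = \omega\cdot 2$ and $\alpha_0 = \alpha_1 = \omega$. Then $\alpha_0^\omega \odot \alpha_1^\omega = \omega^\omega \odot \omega^\omega = \omega^{\omega\oplus\omega} = \omega^{\omega\cdot 2}$, whereas $(\omega\cdot 2)^\omega = \sup_n (\omega\cdot 2)^n = \sup_n \omega^n\cdot 2 = \omega^\omega$, which is strictly smaller. So the inductive step ``$|\sum A_0|\odot|\sum A_1| \le \alpha^\omega$'' breaks already at the first nontrivial non--additively-closed $\alpha$, not only in the additively closed case you flagged as the obstacle.

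There is a deeper reason this cannot be repaired: your argument never uses the archimedean hypothesis. If it worked, it would prove $|\sum A|\le \alpha^\omega$ for \emph{every} well-ordered $A\subseteq G^{>0}$, and that is false. For instance, if $|A|=\omega^2$ and the elements of $A$ lie in pairwise distinct archimedean classes, then the map in \prettyref{lem:different-classes} is an order isomorphism onto $\omega^{\omega^2}$, so $|\sum A| = \omega^{\omega^2}$, while $\alpha^\omega = (\omega^2)^\omega = \omega^{2\omega} = \omega^\omega$. The archimedean assumption is therefore essential, and any correct proof must invoke it.

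The paper's argument is both different and much shorter. The single-archimedean-class hypothesis is used exactly once, to say: for any fixed $b\in\sum A$ there is an $m\in\N$ (depending on $b$) such that every element of $(\sum A)^{<b}$ is a sum of at most $m$ elements of $A$ (because each summand is $\ge$ the least element $a_0$ of $A$, and $b\le m a_0$ for some $m$). The set of sums of exactly $i$ elements has order type $\le \cexp{\alpha}{i}$ by \prettyref{cor:binary-bounds}, so $|(\sum A)^{<b}| \le \bigoplus_{i\le m}\cexp{\alpha}{i} < \cexp{\alpha}{\omega} = \alpha^\omega$, the last step using that $\alpha^\omega$ is additively closed (\prettyref{prop:cexp}). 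Since every proper initial segment of $\sum A$ has order type $<\alpha^\omega$, the result follows. No induction on $\alpha$, no splitting of $A$, no recursion on Cantor normal form.
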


\begin{proof} Let $b\in\sum A$, let $(\sum A)^{<b}$ be the set of elements less than $b$ in $\sum A$. Since all elements of $A$ belong to the archimedean class of its least element, there exists $m\in \N$, depending on $b$, such that every element of $(\sum A)^{<b}$ is the sum of at most $m$ elements of $A$. By induction on $i\leq m$ using \prettyref{cor:binary-bounds}, the set of sums of $i$ elements of $A$ has order type 
	$\leq \cexp{\alpha}{i}$. By the same corollary it then follows by induction on $m$ that 
	$|(\sum A)^{<b}| \leq \bigoplus_{i=1}^m \cexp{\alpha}{i}$. 
	Now for each $i\leq m$, 
	$\cexp{\alpha}{i} < \cexp{\alpha}{\omega}$ and $\cexp{\alpha}{\omega} = \alpha^\omega$ is additively closed (\prettyref{prop:cexp}). It follows that 
	$|(\sum A)^{<b}| < \alpha^\omega$. 
	Since this holds for every $b\in \sum A$, we can conclude that $|\sum A| \leq \alpha^\omega$. 
\end{proof}

\begin{thm}\label{thm:finite-sums}
	Let $(G,+,<)$ be an ordered abelian group, let $A\subseteq G^{>0}$
	be a well ordered set of order type $\alpha \geq 2$ and let $\beta = |A/\! \asymp|$ be the order type of the set of archimedean classes of $A$. Then the order type of $\sum A$ is $\leq \cexp{(\alpha^\omega)}{\beta}$. 
\end{thm}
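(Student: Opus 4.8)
The plan is to partition $A$ into its archimedean classes and argue by transfinite induction on the number of classes, using the localization idea from the proof of \prettyref{lem:archimedean-case} at each successor step.

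\emph{Reduction.} For a finite positive-integer combination $x=\sum_{\iota}a_{\iota}n_{\iota}$ of elements of $A$ one has $x\asymp\max_{\iota}a_{\iota}$ (bound $x$ below by a single term and above by the sum of the coefficients times the largest term), so the archimedean class of $x$ is the largest class occurring among its terms; also $0<x<x'$ gives $x\preceq x'$, hence the class of $x$ is $\le$ that of $x'$. Since each archimedean class meets the well-ordered set $A\subseteq G^{>0}$ in a convex subset (if $a<b<c$ in $A$ with $a\asymp c$ then $a\preceq b\preceq c\asymp a$), the classes of $A$ are linearly ordered, and I write $A=\bigsqcup_{i<\beta}A_{i}$ with $A_{0}<A_{1}<\cdots$, $\alpha_{i}:=|A_{i}|\le\alpha$, and $A_{<j}:=\bigcup_{i<j}A_{i}$. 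I will prove $|\sum A_{<j}|\le\cexp{(\alpha^{\omega})}{j}$ by induction on $j\le\beta$; the case $j=\beta$ is the theorem. For $i\le j$ the set $\sum A_{<i}$ is an initial segment of the (well-ordered) set $\sum A_{<j}$, by the facts above. The base case $j=0$ is trivial, and for $j$ limit every element of $\sum A_{<j}$ involves only finitely many classes, so $\sum A_{<j}=\bigcup_{i<j}\sum A_{<i}$ is an increasing union of initial segments and $|\sum A_{<j}|=\sup_{i<j}|\sum A_{<i}|\le\sup_{i<j}\cexp{(\alpha^{\omega})}{i}=\cexp{(\alpha^{\omega})}{j}$ by the definition of $\bigodot$ at limits.

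\emph{Successor step.} This is the heart of the matter. Assuming $|\sum A_{<i}|\le R:=\cexp{(\alpha^{\omega})}{i}$, I must show $|\sum A_{<i+1}|\le\gamma:=R\odot\alpha^{\omega}=\cexp{(\alpha^{\omega})}{i+1}$. Since $\alpha\ge 2$, \prettyref{prop:cexp} gives $\alpha^{\omega}=\omega^{\delta}$ with $\delta\ge 1$, hence $\gamma=\omega^{\bigoplus_{k<i+1}\delta}$ is of the form $\omega^{(>0)}$, i.e.\ additively closed. Now I imitate the proof of \prettyref{lem:archimedean-case}: fix $b\in\sum A_{<i+1}$ and a representation of $b$, and split $b=y_{b}+z_{b}$ into its class-$<i$ part $y_{b}\in\sum A_{<i}\cup\{0\}$ and its class-$i$ part $z_{b}\in\sum A_{i}\cup\{0\}$. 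If $z_{b}=0$ then $b\in\sum A_{<i}$, so $(\sum A_{<i+1})^{<b}\subseteq\sum A_{<i}$ has order type $\le R<\gamma$. If $z_{b}\neq 0$, set $a_{*}=\min A_{i}$ and pick $N$ with $z_{b}\le N a_{*}$ (possible since every element of $\sum A_{i}$ is $\asymp a_{*}$). For $c\in(\sum A_{<i+1})^{<b}$ having a class-$i$ term, split $c=y_{c}+z_{c}$ likewise; then $z_{c}\le c<b<2z_{b}\le 2N a_{*}$ (using $y_{b}\prec z_{b}$ when $y_{b}\neq 0$), so $z_{c}$ is a sum of at most $m:=2N$ elements of $A_{i}$. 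Hence, writing $T$ for the set of finite sums of at most $m$ elements of $A_{i}$,
\[
(\textstyle\sum A_{<i+1})^{<b}\ \subseteq\ \textstyle\sum A_{<i}\ \cup\ T\ \cup\ \bigl(\textstyle\sum A_{<i}+T\bigr).
\]
By \prettyref{fact:binary}, $|T|\le\bigoplus_{k=1}^{m}\cexp{\alpha_{i}}{k}$, and each $\cexp{\alpha_{i}}{k}\le\cexp{\alpha}{k}<\cexp{\alpha}{\omega}=\alpha^{\omega}$ (\prettyref{prop:cexp}) with $\alpha^{\omega}$ additively closed, so $|T|<\alpha^{\omega}$; therefore $|\sum A_{<i}+T|\le R\odot|T|<R\odot\alpha^{\omega}=\gamma$ (Hessenberg product is strictly increasing in each argument, as $R\ge 1$), while $|\sum A_{<i}|\le R<\gamma$ and $|T|<\alpha^{\omega}\le\gamma$. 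So $(\sum A_{<i+1})^{<b}$ sits inside a union of three well-ordered sets each of order type $<\gamma$, hence has order type $<\gamma$ (again \prettyref{fact:binary}, together with additive closure of $\gamma$). As $b$ ranges over $\sum A_{<i+1}$ this yields $|\sum A_{<i+1}|\le\gamma$, closing the induction.

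\emph{Expected obstacle.} The delicate point is precisely the successor step. Estimating $\sum A_{<i+1}$ directly as a subset of the sumset $(\sum A_{<i}\cup\{0\})+(\sum A_{i}\cup\{0\})$ and applying \prettyref{fact:binary} produces spurious $\oplus$- and $+1$-terms --- for instance $(R\oplus 1)\odot(\alpha^{\omega}\oplus 1)$ is genuinely larger than $R\odot\alpha^{\omega}$ --- which overshoot the target $\cexp{(\alpha^{\omega})}{i+1}$, and there is no canonical decomposition of a group element into archimedean components to fall back on. The way around this is the localization trick from \prettyref{lem:archimedean-case}: passing to the initial segment below a fixed $b$ caps the number of class-$i$ summands, which forces the auxiliary set $T$ of class-$i$ sums to have order type strictly below the additively closed ordinal $\alpha^{\omega}$, so that additive closure of $\cexp{(\alpha^{\omega})}{i+1}$ can then absorb the finitely many union and product contributions. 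In effect one re-runs the single-class lemma one archimedean class at a time.
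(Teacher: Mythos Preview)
Your proof is correct and follows the same inductive scheme as the paper: partition $A$ by archimedean class and induct on $\beta$, with the limit case handled identically via the increasing union of initial segments.

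At the successor step the paper is much shorter: it writes $\sum A_{<i+1}=\sum A_{<i}+\sum A_i$, applies \prettyref{lemma:archimedean} (giving $|\sum A_i|\le\alpha^\omega$) and \prettyref{cor:binary-bounds}(2), and reads off $|\sum A_{<i+1}|\le\cexp{(\alpha^\omega)}{i}\odot\alpha^\omega$ directly, without re-running the localization argument. Your worry that the sumset decomposition produces ``spurious $\oplus$- and $+1$-terms'' rests on computing $|\{0\}\cup\sum A_{<i}|$ as $R\oplus 1$; but adjoining a new \emph{minimum} gives $1+R$ (ordinal sum), and since $\sum A_{<i}$ and $\sum A_i$ each contain all positive multiples of their least element, both have order type $\ge\omega$, so $1+|\sum A_{<i}|=|\sum A_{<i}|$ and $1+|\sum A_i|=|\sum A_i|$. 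Hence the direct bound $(\{0\}\cup\sum A_{<i})+(\{0\}\cup\sum A_i)$ already has order type $\le|\sum A_{<i}|\odot|\sum A_i|\le\cexp{(\alpha^\omega)}{i}\odot\alpha^\omega$. Your localization layer is correct but unnecessary.
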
 
\begin{proof}
	Let $B\subseteq A$ be a set of representatives for the archimedean
	classes of $A$ and let $(b_{i}\suchthat i<\beta)$ be an increasing
	enumeration of $B$. We reason by induction on $\beta$. The case $\beta=1$ is \prettyref{lemma:archimedean}. 
	
	Case $\beta$ limit. For $b\in G^{>0}$, let $A^{\preceq b}$ be the subset of $A$ consisting of the elements $\preceq b$ and let $A^{\asymp b}$ be the set of elements of $A$ which are  $\asymp b$. 
	Then $\sum A = \bigcup_{\gamma < \beta} \sum(A^{\preceq b_\gamma})$. The sets in the union are pairwise initial segments of one another. It follows that the order type of the union is the sup of the respective order types. By induction 
	$|\sum A| \leq \sup_{\gamma <\beta} \cexp{(\alpha^\omega)}{\gamma}
	=
	 \cexp{(\alpha^\omega)}{\beta}$. 
	
	Case $\beta = \gamma + 1$. We have  $\sum A = \sum(A^{\prec b_\gamma}) + \sum (A^{\asymp b_\gamma})$. By the induction hypothesis
	 $|\sum(A^{\prec b_\gamma})| \leq \cexp{(\alpha^\omega)}{\gamma}$.
	  The elements of $A^{\asymp b_\gamma}$ live in a single archimedean class, so $|\sum (A^{\asymp b_\gamma})| \leq \alpha^\omega$. It follows that $|\sum A| 
	  \leq
	   \cexp{(\alpha^\omega)}{\gamma}
	   \odot 
	   \alpha^\omega 
	  =
	  \cexp{(\alpha^\omega)}{(\gamma+1)}
	  $. 
\end{proof}

We define a sequence of countable ordinals as follows.
\begin{defn}
	Let $\omega_0 = 1$ and, inductively,  $\omega_{n+1} = \omega^{\omega_n}$. 
\end{defn}

\begin{rem}
	For all $n\in \N$, $\omega_n$ is multiplicatively  closed.  
\end{rem}
\begin{proof} Clearly the product of two ordinals $<1$ is $<1$, so the property holds for $n=0$. 
	For $n\geq 1$, $\omega_n$ has the form $\omega^{\omega^\delta}$ (e.g. $\omega_1 = \omega = \omega^{\omega^0}$ and $\omega_2 = \omega^\omega = \omega^{\omega^1}$), so it is multiplicatively closed. 
\end{proof}

For our applications we need the following lemma. 
\begin{lem}\label{lem:property-of-omega-n} Let $2 \leq n < \omega$. 
	If $\alpha<\omega_{n+1}$ and $\beta < \omega_{n}$, then $\cexp{(\alpha^\omega)}{\beta} < \omega_{n+1}$.
\end{lem}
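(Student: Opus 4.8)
The goal is to bound $\cexp{(\alpha^\omega)}{\beta}$ when $\alpha < \omega_{n+1}$ and $\beta < \omega_n$, with $2 \le n < \omega$. My first move is to control $\alpha^\omega$. Since $\alpha < \omega_{n+1} = \omega^{\omega_n}$, we may pick $\delta < \omega_n$ with $\alpha < \omega^{\delta+1} = \omega^\delta \cdot \omega$, so $\alpha^\omega \le (\omega^{\delta+1})^\omega = \omega^{(\delta+1)\omega} = \omega^{\delta\omega}$ (using that $\omega$ is a limit, so $(\delta+1)\omega = \delta\omega$, exactly as in the proof of \prettyref{prop:cexp}). Now $\delta\omega < \omega_n$ as well: indeed $\omega_n$ is multiplicatively closed for $n \ge 1$ (it has the form $\omega^{\omega^\varepsilon}$ by the preceding remark), so $\delta\omega < \omega_n$ since both factors are. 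Hence $\alpha^\omega \le \omega^{\mu}$ for some $\mu < \omega_n$, and in particular $\alpha^\omega < \omega^{\omega_n} = \omega_{n+1}$.

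Next I must bound $\cexp{(\omega^\mu)}{\beta}$ for $\mu < \omega_n$ and $\beta < \omega_n$. The clean approach is the identity established inside the proof of \prettyref{prop:cexp}: for $\gamma$ of the form $\omega^\mu$ one has, by a straightforward induction on $\beta$, $\cexp{(\omega^\mu)}{\beta} = \omega^{\bigoplus_{i<\beta}\mu}$. By \prettyref{lem:bound-on-nat-sum} we have $\bigoplus_{i<\beta}\mu \le \mu \cdot 2\beta$. Since $n \ge 2$, $\omega_n$ is not only multiplicatively closed but in fact closed under the relevant arithmetic: $\beta < \omega_n$ gives $2\beta < \omega_n$ (as $2, \beta < \omega_n$ and $\omega_n$ is multiplicatively closed), and then $\mu \cdot 2\beta < \omega_n$ again by multiplicative closure. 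Write $\nu := \mu \cdot 2\beta < \omega_n$. Then $\cexp{(\omega^\mu)}{\beta} \le \omega^{\nu} < \omega^{\omega_n} = \omega_{n+1}$.

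Finally I assemble the pieces. From the first paragraph, $\alpha^\omega \le \omega^\mu$ with $\mu < \omega_n$; since $\cexp{-}{\beta}$ is monotone in the base (a transfinite Hessenberg product of copies of the base, and $\odot$ is monotone, with the limit clause a sup), $\cexp{(\alpha^\omega)}{\beta} \le \cexp{(\omega^\mu)}{\beta} \le \omega^\nu < \omega_{n+1}$, which is the desired conclusion. The one point that needs a word of care — and the main obstacle — is the monotonicity of $\xi \mapsto \cexp{\xi}{\beta}$ in the base $\xi$: this follows by transfinite induction on $\beta$ from monotonicity of $\odot$ in both arguments (successor step) and the fact that a sup of an increasing-in-$\xi$ family is increasing in $\xi$ (limit step), with the base case $\cexp{\xi}{0} = 1$ trivial. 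Everything else is bookkeeping with Cantor normal forms and the multiplicative closure of $\omega_n$; the hypothesis $n \ge 2$ is used precisely to guarantee $\omega_n$ absorbs the product $\mu \cdot 2\beta$, which would fail for $n = 1$ since $\omega_1 = \omega$ is not multiplicatively closed.
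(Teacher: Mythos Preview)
Your proof is correct and takes a genuinely different route from the paper's. The paper writes $\beta=\lambda+k$ with $\lambda$ limit or zero and $k<\omega$, then invokes \prettyref{cor:cexp} to obtain $\cexp{(\alpha^\omega)}{\beta}=\alpha^{\omega\lambda}\odot\cexp{(\alpha^\omega)}{k}$, bounding each factor separately below $\omega_{n+1}$ via multiplicative closure. You instead first squeeze $\alpha^\omega\le\omega^\mu$ with $\mu<\omega_n$, appeal to monotonicity of $\xi\mapsto\cexp{\xi}{\beta}$, and then compute $\cexp{(\omega^\mu)}{\beta}=\omega^{\bigoplus_{i<\beta}\mu}$ directly, bounding the exponent with \prettyref{lem:bound-on-nat-sum}. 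Your approach bypasses the limit/successor split of $\beta$ at the price of having to justify monotonicity of the iterated Hessenberg power (which you do); the paper's approach avoids that auxiliary monotonicity lemma but leans on \prettyref{cor:cexp}.

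Two small corrections to your commentary, neither affecting validity. First, your closing remark that ``$\omega_1=\omega$ is not multiplicatively closed'' is false: products of finite ordinals are finite, and the paper's Remark explicitly records that $\omega_n$ is multiplicatively closed for every $n\ge 1$. The place your argument genuinely needs $n\ge 2$ is earlier, at the step $\delta\omega<\omega_n$ (equivalently $\mu<\omega_n$): this requires $\omega<\omega_n$, which fails for $n=1$. Second, the identity $(\delta+1)\omega=\delta\omega$ that you quote from the proof of \prettyref{prop:cexp} is stated there only for $\delta>0$; it fails at $\delta=0$. This is harmless, since all you need is $(\delta+1)\omega<\omega_n$, which holds for $n\ge 2$ regardless.
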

\begin{proof} 
	We can write $\beta = \lambda + k$ where $\lambda$ is a limit ordinal or zero and $k<\omega$. By \prettyref{cor:cexp} we have $\cexp {(\alpha^\omega)}{\beta} = \alpha^{\omega \lambda} \odot \cexp{(\alpha^\omega)}{k}$.  
	Since $\alpha < \omega_{n+1}= \omega^{\omega_n}$ and $\omega_n$ is a limit ordinal, there is some $\gamma<\omega_n$ such that $\alpha \leq \omega^\gamma$. Since $\omega$ and $\lambda$ are $<\omega_n$ and $\omega_n$ is multiplicatively closed, we have $\omega \lambda <\omega_n$, hence $\alpha^{\omega \lambda} < \omega_{n+1}$. Similarly, $\cexp{(\alpha^\omega)}{k}= \cexp{\alpha^\omega}{k}< \omega_{n+1}$. Now since $\omega_{n+1}$ is multiplicatively closed, $\alpha^{\omega \lambda} \odot (\cexp{\alpha^\omega}{k})<\omega_{n+1}$, as desired. 
\end{proof}

\begin{cor}\label{cor:bounds-on-sums}
	Let $(G,+,<)$ be an ordered abelian group, let $A\subseteq G^{>0}$
	be a well ordered set of order type $<\omega_{n+1}$ whose set of archimedean classes has order type $<\omega_n$. Then the order type of $\sum A$ is $<\omega_{n+1}$ and its set of archimedean classes has order type $<\omega_n$. 
\end{cor}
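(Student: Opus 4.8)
The plan is to obtain the corollary as a direct consequence of \prettyref{thm:finite-sums} and \prettyref{lem:property-of-omega-n}, supplemented by one elementary observation about archimedean classes of finite sums; I do not expect any genuine obstacle. Write $\alpha = |A|$ and $\beta = |A/\!\asymp|$, so that $\alpha < \omega_{n+1}$ and $\beta < \omega_n$ by hypothesis; as in \prettyref{lem:property-of-omega-n} I will work with $n \geq 2$ (for $n = 0$ the condition $\beta < \omega_0 = 1$ forces $A = \emptyset$ and the statement is vacuous).

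For the bound on the order type of $\sum A$, first I would dispose of the degenerate case $\alpha \leq 1$: then either $A$ is empty and so is $\sum A$, or $A = \{a\}$ is a singleton and $\sum A = \{ka : k \in \N^{>0}\}$ has order type $\omega < \omega_{n+1}$. When $\alpha \geq 2$, \prettyref{thm:finite-sums} gives $|\sum A| \leq \cexp{(\alpha^\omega)}{\beta}$, and since $\alpha < \omega_{n+1}$ and $\beta < \omega_n$, \prettyref{lem:property-of-omega-n} yields $\cexp{(\alpha^\omega)}{\beta} < \omega_{n+1}$. Thus $|\sum A| < \omega_{n+1}$ in all cases.

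For the bound on the archimedean classes of $\sum A$, the key point is that $A$ and $\sum A$ meet exactly the same archimedean classes of $G$. Indeed, if $x = n_1 a_1 + \cdots + n_k a_k \in \sum A$ with $a_1 < \cdots < a_k$ in $A$ and each $n_i \geq 1$, then $a_k \leq x \leq (n_1 + \cdots + n_k)\, a_k$, so $x \asymp a_k$; hence every archimedean class meeting $\sum A$ already meets $A$, while the converse is clear from $A \subseteq \sum A$. Consequently the inclusion $A \hookrightarrow \sum A$ induces an order isomorphism $A/\!\asymp \;\cong\; (\sum A)/\!\asymp$, and in particular $|(\sum A)/\!\asymp| = \beta < \omega_n$, as required. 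The only points needing any care are the separate treatment of $\alpha \leq 1$ (so that \prettyref{thm:finite-sums}, which assumes $\alpha \geq 2$, applies) and this last elementary estimate, which is precisely what makes the second half of the conclusion possible.
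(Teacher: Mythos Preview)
Your proposal is correct and follows essentially the same approach as the paper: invoke \prettyref{thm:finite-sums} for the order-type bound, feed it into \prettyref{lem:property-of-omega-n}, and observe that finite sums do not create new archimedean classes. The paper's proof is a one-line citation of these two results together with the bare assertion that ``the set of archimedean classes does not change under taking finite sums''; you spell out that observation and also treat the edge case $\alpha \le 1$ (needed because \prettyref{thm:finite-sums} assumes $\alpha \ge 2$), which the paper leaves implicit. Your remark restricting to $n\ge 2$ matches the hypothesis of \prettyref{lem:property-of-omega-n}; neither you nor the paper addresses $n=1$ separately, but the applications later in the paper only require $n\ge 2$.
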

\begin{proof}
	By \prettyref{lem:property-of-omega-n} and \prettyref{thm:finite-sums}, together with the observation that the set of archimedean classes does not changes under taking finite sums. 
\end{proof}

Another interesting bound on $|\sum A|$ is contained in \cite{DriesE2001}: if $|A| \leq \alpha$, then $|\sum A| \leq \omega^{\omega \alpha}$. For our purposes we need the bound in \prettyref{cor:bounds-on-sums} which takes into account also the order type of the archimedean classes of $A$. Note that both bounds imply that if $|A|<\eps_{0}$, then $|\sum A| < \eps_{0}$. 

\section{Generalized power series}\label{sec:series}
Given an ordered field $K$, a multiplicative subgroup $\M$ of $K^{>0}$ is called a group of {\bf monomials} if for each non-zero element $x$ of $K$ there is one and only one $\m\in \M$ such that $x\asymp \m$. We assume some familiarity with Hahn's field $\R((\M))$ of generalized power series \cite{Hahn1907}, but we recall a few definitions. An element of $\R((\M))$ is a formal sum $f= \sum_{i<\alpha} \m_i r_i$ where $\alpha$ is an ordinal, $(\m_i)_{i<\alpha}$ is a decreasing sequence in $\M$, and $0\neq r_i\in \R$ for each $i<\alpha$. We say that $\{\m_i \mid i<\alpha\}$ is the {\bf support} of the series $\sum_{i<\alpha} \m_i r_i$. The sum and product of generalized series  is defined in the obvious way. We order $\R((\M))$ by  $f = \sum_{i<\alpha} \m_i r_i > 0 \iff r_0 > 0$. This makes $\R((\M))$ into an ordered field with $\M$ as a group of monomials (where we identify $\m\in \M$ with $\m1\in \R((\M))$). 

A family $(f_i)_{i\in I}$ of elements of $\R((\M))$ is {\bf summable} if each monomial $\m\in \M$ is contained in the support of finitely many $f_i$ and there is no strictly increasing sequence $(\m_n)_{n\in \N}$ of monomials such that each $\m_n$ belongs to the support of some $f_i$. In this case $\sum_{i\in I}f_i\in \no$ is defined adding the coefficients of the corresponding monomials. 

To prove that $\R((\M))$ is a field, we write a non-zero element $x$ of $\R((\M))$ in the form $r\m(1+\eps)$ with $r\in \R^*, \m\in \M$ and $\eps \prec 1$ and observe that $x^{-1} = r^{-1}\m^{-1}\left(\sum_{n\in \N} (-1)^n \eps^n \right)$ where the summability of $(-1)^n \eps^n$ is ensured by Neumann's Lemma \cite{Neumann1949}. More generally Neumann's Lemma says that if $\eps$ is an infinitesimal element of $\R((\M))$ and $(r_n)_{n\in \N}$ is any sequence of real numbers, then $(r_n \eps^n)_{n\in \N}$ is summable, so we can evaluate the formal power series $P(X) = \sum_{n\in \N} r_n X^n$ at any infinitesimal element of $\R((\M))$ . 

Given $f$ and $g$ in $\R((\M))$ we say that $g$ is a {\bf truncation} of $f$ if $f = \sum_{i<\alpha} \m_i r_i$ and  $g = \sum_{i<\beta} \m_i r_i\in \R((\M))$ for some $\beta<\alpha$. If $\G\subseteq \M$ is a subset, we write $\R((\G))$ for the set of all $f\in \R((\M))$ whose support is contained in $\G$. 

\section{Surreal numbers} Conway's field $\no$ of surreal numbers \cite{Conway76,Gonshor1986} is an ordered real closed field extending the field $\R$ of real numbers and containing a copy of the ordinal numbers. In particular $\no$ is a proper class, and admits a group of monomials $\M\subset \no^{>0}$ which is itself a proper class. We can define generalized power series with monomials in $\M$ exactly as above, but we denote the resulting field as $\R((\M ))_\on$, where the subscript is meant to emphasize that, although $\M$ is a proper class, the support of a generic element $\sum_{i<\alpha} \m_i r_i$ of $\no$ is a set (because $\alpha$ is still assumed to be an ordinal). 
\citet{Conway76} showed that we can identity $\no$ with $\R((\M))_\on$, where the class $\M \subset \no$ of monomials is defined explicitly (it coincides with the image of Conway's omega-map). 

A surreal $x = \sum_{i<\alpha} \m_i r_i \in \R((\M))_\on$ is  {\bf purely infinite} if all monomials $\m_i$ in its support are $>1$ (hence infinite). We write $\no^\uparrow$ for the (non-unitary) ring of purely infinite surreals. We observe that 
every $x\in \no$ can be written in a unique way in the form $x = x^\uparrow + x^\circ + x^\downarrow$ where $x^\uparrow \in \no^\uparrow$, $x^\circ \in \R$ and $x^\downarrow \prec 1$. 
This yields a direct sum decompostion of $\R$-vector spaces
$$\no = \no^\uparrow +\R + o(1)$$ where $o(1)$ is the set of elements $\prec 1$.
\citet{Gonshor1986} defined an isomorphism of ordered groups $\exp:(\no,+,<)\to (\no^{>0},\cdot,<)$ extending the real exponential function and satisfying $\exp(x) \geq 1+x$ for all $x\in \no$ and $\exp(x) = \sum_{n\in \N}\frac{x^n}{n!}$ for $x\prec 1$ (we need $x\prec 1$ to ensure the summability of the series). Gonshor's $\exp$ is defined in such a way that $\exp(\no^\uparrow) = \M$, namely the monomials are the images of the purely infinite numbers. The stated properties are already sufficient to ensure that $\no$, with Gonshor's $\exp$, is a model of the elementary theory $T_{\exp}$ of the real exponential field $\R_{\exp}=(\R,<,+,\cdot,\exp)$; in other words $(\no,\exp)$ satisfies all the property which are true in $\R_{\exp}$ and are expressible by a first-order formula in the ring language and a symbol for the exponential function \cite{DriesE2001}. A discussion of these issues can also be found in \cite{Berarducci2018b}, where other fields of generalized power series admitting an exponential map resembling the surreal $\exp$ have been considered. 

As long as we are only interested in the elementary theory of $\no$ as an exponential field, both the choice of the monomials $\M\subset \no$ and the details of the definition of $\exp$ on $\no^\uparrow$ are not important. However they become important for summability issues and the properties of infinite sums, so we need to state a few more facts that are are needed in this paper (all of them can be found in \cite{Berarducci2019}). We denote by $\log:\no^{>0}\to \no$ the compositional inverse of $\exp$ and we also write $e^x$ for $\exp(x)$. 
It can be shown that if $x \prec 1$, then $\log(1+x) = \sum_{n=1}^\infty \frac{(-1)^{n+1}}{n}x^n$. An important fact, that depends on the choice of $\M\subset \no^{>0}$, is that $\omega$ is a monomial (where $\omega$ is the least infinite ordinal seen as a surreal). More generally, for each $n\in \N$, $\log_n(\omega)$ is an infinite monomial  \cite{Gonshor1986}, where $\log_0(\omega) = \omega$ and $\log_{n+1}(\omega) = \log(\log_n(\omega))$. This fact is used in \cite{Berarducci2019} to show that $\no$ contains an isomorphic copy of the field $\T$ of transseries  as an exponential field (the notation $\T$ is used in \cite{Aschenbrenner2015} and refers to the version of the transseries defined  \cite{DriesMM2001} under the name ``logarithmic exponential series). Moreover $\no$ admits a differential operator $\partial:\no\to \no$ extending the one on $\T$ \cite{Berarducci2018,Berarducci2019}. 
Since $\no^\uparrow$ is closed under multiplication by a real number, any real power $\m^r= e^{r \log(\m)}$ of a monomial is again a   monomial. Moreover, if $\m$ is an infinite monomial, $e^\m$ is again a monomial (because $\exp(\no^\uparrow) = \M$). 

From the equations $\no=\R((\M))_\on$ and $\M= e^{\no^\uparrow}$ it follows that every surreal can be written in a unique way in the form $\sum_{i<\alpha} e^{\gamma_i} a_i$ where 
$\alpha$ is an ordinal, $(\gamma_i)_{i<\alpha}$ is a decreasing sequence in $\no^\uparrow$ and $a_i \in \R^*$ (the empty sum is $0$). Following  \cite{Berarducci2018}, we call this representation {\bf Ressayre form}.

\section{Surreal expansions of Skolem functions}\label{sec:surreal-expansions}
Since the surreal numbers are a model of $T_{\exp}$ there is a unique map from $\Sk$ to  $\no$ sending the identity function $\x$ into $\omega$ and preserving $1,+,\cdot$ and the function $(a,b)\mapsto a^b$ where $a^b = e^{b\log (a)}$. Since $\omega$ is greater than any natural number, this map preserves the order, so it is an embedding of ordered semirings endowed with an exponential $(a,b)\mapsto a^b$. We identify a Skolem function $f= f(\x)\in \Sk$ with its image $f(\omega)\in \no$ under this embedding, and we define the {\bf Ressayre form of $f\in \Sk$} as the Ressayre form of the surreal number $f(\omega)$. 

If we identify the transseries $\T$ with a subfield of $\no$ (as in \cite{Berarducci2019}), it is easy to see that the image of the embedding of $\Sk$ in $\no$ is contained in $\T$, but we shall not need this fact.  

We can consider the Ressayre form of a Skolem function $f(x)$ as an asymptotic development for $x\to +\infty$. For example consider the Skolem function $(\x+1)^\x$ and identify $\x$ with $\omega\in \no$. To find its Ressayre form we write $(\x+1)^\x = e^{\x \log(1+\x)}$ and we expand $\log(1+\x)$ as follows
\begin{align*}
\log(1+\x) &= \log(\x(1+\x^{-1})) \\
& =   \log(\x) + \sum_{n=1}^{\infty} \frac{(-1)^{n+1}}{n}\x^{-n}\\
&= \log(\x) + \x^{-1} - \x^{-2}/2 + \ldots
\end{align*} 
Now, using $\x^\x = e^{\x \log(\x)}$ and $\exp(\eps) = \sum_{n\in \N} \eps^n/n! = 1 + \eps + \ldots$ for $\eps \prec 1$,  we obtain 
\begin{align*}
(\x+1)^\x &= \exp(\x \log(\x) +1 - \x^{-1}/2 + \ldots) \\
& = e \x^\x (1 -\x^{-1}/2 + \ldots) \\
& = e \x^\x - e2^{-1} \x^{\x-1} + \ldots
\end{align*}
Replacing $\x$ with $\omega$ we find the Ressayre form of the surreal $(\omega+1)^\omega$. 

\section{Finer asymptotic relations}\label{sec:equivalence}
The results in this section are stated for $\no$ but they hold more generally in every model of $T_{\exp}$.  We identify $\Sk$ as a subset of $\no$ as discussed in the previous section. In particular $\x = \omega \in \no$. 

\begin{defn}\label{defn:eqrel} Let $1 \leq c \in \no$. Given two positive surreals $f$ and $g$ we define $f\sim_c g$ if $f^c \sim g^c$ and $f\asymp_c g$ if $f^c \asymp g^c$. 
\end{defn}
When $c=1$, the relations $\sim_c$ and $\asymp_c$ become the usual $\sim$ and $\asymp$ relations. When $c>1$ we obtain finer equivalence relations. One of the main ideas of this paper is to try to understand how many classes modulo $\sim_c$ there are inside a class modulo $\asymp_c$. We are primarily interested in the case $c=1$, but we need to consider the general case to carry out the induction. In our terminology, the paper of \cite{Dries1984} deals with the case when $c$ is equal $\x^n$ for some $n\in \N$, but we need to follow a different approach to be able to generalize it. A consequence of our main result (\prettyref{thm:main}) is that the set of $\sim_c$-classes within any class modulo $\asymp_c$ has order type $\leq \omega$. 

In this section we establish some basic properties of $\sim_c$ and $\asymp_c$. In particular we show that $f \asymp_c g \iff c(f-g) \preceq g$ and $f \sim_c g  \iff c(f-g) \prec g$, yielding a characterization of these relations which does not depend on the exponential function.  
\begin{lem}
	For any $t\in \no$ we have $t\preceq 1$ if and only if $e^t \asymp 1$. 
\end{lem}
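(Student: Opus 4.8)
The statement to prove is: for any $t\in\no$, $t\preceq 1$ if and only if $e^t\asymp 1$. The plan is to split into the two implications and reduce everything to the monotonicity of Gonshor's $\exp$ together with the inequality $\exp(x)\geq 1+x$ and the decomposition $\no=\no^\uparrow+\R+o(1)$.

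First I would handle the forward direction. Suppose $t\preceq 1$. Then there is $n\in\N$ with $|t|\leq n$, so $-n\leq t\leq n$. Since $\exp$ is an increasing group homomorphism $(\no,+,<)\to(\no^{>0},\cdot,<)$ extending the real exponential, it is monotone, hence $e^{-n}\leq e^t\leq e^n$. As $e^n$ and $e^{-n}$ are positive real numbers (Gonshor's $\exp$ extends the real one, so $e^n$ is the usual real value), this gives $e^t\asymp 1$: a positive surreal trapped between two nonzero reals lies in the archimedean class of $1$.

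For the converse I would prove the contrapositive: if $t\not\preceq 1$, then $e^t\not\asymp 1$. If $t\not\preceq 1$, either $t>n$ for every $n\in\N$ or $t<-n$ for every $n\in\N$. In the first case, $\exp(x)\geq 1+x$ gives $e^t\geq 1+t>1+n$ for all $n$, so $e^t$ is infinite, i.e.\ $e^t\succ 1$, hence $e^t\not\asymp 1$. In the second case, apply the previous case to $-t$ (which is $>n$ for all $n$) to get $e^{-t}\succ 1$; since $\exp$ is a group homomorphism, $e^t=(e^{-t})^{-1}\prec 1$, so again $e^t\not\asymp 1$. Combining the two directions completes the proof.

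The only subtlety — hardly an obstacle — is making sure that $e^n$ for $n\in\N$ is a \emph{real} number rather than merely some positive surreal, so that the inequalities $e^{-n}\leq e^t\leq e^n$ actually witness membership in the archimedean class of $1$; this is exactly the assertion that Gonshor's $\exp$ extends the real exponential, which is recorded in the excerpt. Everything else is immediate from monotonicity of $\exp$, the super-tangent inequality $\exp(x)\geq 1+x$, and the homomorphism property $e^{-t}=(e^t)^{-1}$.
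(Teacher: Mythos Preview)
Your proof is correct and follows essentially the same approach as the paper's: both directions rest on the monotonicity of $\exp$ and the fact that $e^{\pm k}$ for $k\in\N$ are positive reals, hence $\asymp 1$. The paper compresses the argument into a single chain of equivalences $-k\leq t\leq k \iff e^{-k}\leq e^t\leq e^k$ (using that $\exp$ is an order isomorphism), whereas you treat the converse by contrapositive via the super-tangent inequality $e^t\geq 1+t$; this is a cosmetic difference, and in fact monotonicity alone would already give $e^t>e^n$ for all $n$ when $t>\N$, so the super-tangent bound is not strictly needed.
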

\begin{proof}
	We have $t\preceq 1$ if and only if there is $k\in \N$ such that $-k \leq t\leq k$. This happens if and only if $e^{-k} \leq e^{t} \leq e^k$ for some $k\in \N$, or equivalently $e^t \asymp 1$ (because $e^{-k}\asymp 1 \asymp e^k$).  
\end{proof}

\begin{lem}\label{lem:et1}
	For $t\in \no$ we have $t\prec 1$ if and only if $e^t \sim 1$. 
\end{lem}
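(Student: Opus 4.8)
The plan is to reduce the statement to a computation with the power series for $\exp$ and $\log$ at infinitesimal arguments. First I would record the elementary observation that for any $f\in \no^{>0}$ one has $f\sim 1$ if and only if $f-1\prec 1$: the forward implication is the symmetry of $\sim$ already established in the text (applied to $f$ and $1$), and conversely, if $f-1\prec 1$ then $f = 1+(f-1)\asymp 1$, so $f-1\prec 1\asymp f$, that is $f-1\prec f$. Thus it suffices to prove $t\prec 1 \iff e^t - 1 \prec 1$.

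For the direction $t\prec 1 \Rightarrow e^t-1\prec 1$: the case $t=0$ is trivial, so assume $t\neq 0$. Since $t$ is infinitesimal, $e^t = \sum_{n\in\N} t^n/n!$ is summable, hence $e^t - 1 = \sum_{n\geq 1} t^n/n!$. Every monomial occurring in the support of some $t^n$ with $n\geq 1$ is a product of monomials from $\supp(t)$, each of which is $<1$ because $t\prec 1$; hence it is $<1$. Therefore every monomial in $\supp(e^t-1)$ is $<1$, which gives $e^t - 1 \prec 1$. (One in fact gets $e^t - 1 = t\bigl(\sum_{n\geq 0} t^n/(n+1)!\bigr)\asymp t$, because a real power series with nonzero constant term evaluated at an infinitesimal is $\asymp 1$ by the same support argument; but $e^t-1\prec 1$ is all that is needed here.)

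For the direction $e^t - 1 \prec 1 \Rightarrow t\prec 1$: put $\eta := e^t - 1 \prec 1$, so $e^t = 1+\eta$. Applying $\log$ and the expansion $\log(1+x) = \sum_{n\geq 1}\frac{(-1)^{n+1}}{n}x^n$, valid for $x\prec 1$ and recalled in the excerpt, yields $t = \log(1+\eta) = \sum_{n\geq 1}\frac{(-1)^{n+1}}{n}\eta^n$. Exactly as above, every monomial in the support of this series lies below $1$ since $\eta\prec 1$, so $t\prec 1$.

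The argument is essentially routine once the reduction in the first paragraph is in place; the only point requiring care is the support argument showing that evaluating a real power series with zero constant term at an infinitesimal produces an element $\prec 1$ (equivalently, that a real power series with nonzero constant term, evaluated at an infinitesimal, is $\asymp 1$). I expect the main obstacle, such as it is, to be making this clean and ensuring that the series expansions of $\exp$ and $\log$ are applied only to genuinely infinitesimal arguments, which is precisely what the hypotheses and the preceding lemma guarantee.
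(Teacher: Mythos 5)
Your argument is correct, but it takes a genuinely different route from the paper. The paper's proof is entirely elementary: it unwinds $t\prec 1$ to $-1/k\leq t\leq 1/k$ for all $k$, applies monotonicity of $\exp$ to get $e^{-1/k}\leq e^t\leq e^{1/k}$, and then uses only the real-number fact that $e^{1/k}-e^{-1/k}\to 0$ to conclude $e^t\sim 1$; the converse runs the same chain of equivalences backwards. No power series, no supports, no use of the Hahn-series representation $\no=\R((\M))_\on$ at all. Your proof instead passes to the explicit series $e^t=\sum t^n/n!$ and $\log(1+\eta)=\sum\frac{(-1)^{n+1}}{n}\eta^n$ for infinitesimal arguments and checks that every monomial in the support lies below $1$. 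Both are valid; the trade-off is that the paper's argument uses only first-order properties of the exponential and hence, as the section header advertises, applies verbatim in any model of $T_{\exp}$, whereas your support argument leans on the concrete generalized power series structure of $\no$ and the particular definition of Gonshor's $\exp$ on infinitesimals. Your route is a bit heavier for this lemma, but it does yield the sharper statement $e^t-1\asymp t$ essentially for free, which the paper's proof does not.

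One small point worth flagging: your reduction ``$f\sim 1\iff f-1\prec 1$'' is really just the symmetry of $\sim$ applied to the pair $(f,1)$, since by definition $1\sim f$ means exactly $1-f\prec 1$. You give a direct argument for the converse direction, which is fine, but both directions are instances of the symmetry remark already made in the paper, so the reduction could be stated in one line.
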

\begin{proof}
	We have $t\prec 1$ if and only if $-1/k\leq t \leq 1/k$ for all positive $k\in \N$. This happens if and only if $e^{-1/k} \leq e^{t} \leq e^{1/k}$ for all positive $k\in \N$, or equivalently $e^t \sim 1$ (because $|e^t -1|\leq |e^{1/k} - e^{-1/k}|$ and $|e^{1/k} - e^{-1/k}|$ becomes smaller than any positive real for $k$ sufficiently large).  
\end{proof}


\begin{prop}\label{prop:mono}
	Let $c\geq c' \geq 1$ and let $f,g>0$. 
	\begin{enumerate}
		\item If $f\asymp_c g$, then $f\asymp_{c'} g$. 
		\item If $f\sim_c g$, then $f\sim_{c'} g$. 
	\end{enumerate}
	In particular, if $f\asymp_c g$, then $f\asymp g$. 
\end{prop}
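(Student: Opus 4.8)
The plan is to reduce everything to the two preceding lemmas (characterizing $e^t\asymp 1$ by $t\preceq 1$ and $e^t\sim 1$ by $t\prec 1$) together with the definitions of $\asymp_c$ and $\sim_c$. First I would write, for positive surreals $f,g$, the quotient $f^c/g^c = e^{c(\log f-\log g)}$, so that by \prettyref{defn:eqrel} we have $f\asymp_c g \iff e^{c(\log f-\log g)}\asymp 1$ and $f\sim_c g \iff e^{c(\log f-\log g)}\sim 1$. By the two lemmas this is equivalent to $c(\log f-\log g)\preceq 1$ (resp.\ $\prec 1$). Since $c\geq c'\geq 1$, from $c(\log f-\log g)\preceq 1$ it follows that $c'(\log f-\log g)\preceq c(\log f - \log g)\preceq 1$, and likewise for $\prec$; reversing the chain of equivalences gives $f\asymp_{c'}g$ (resp.\ $f\sim_{c'}g$). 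This proves (1) and (2). For the final clause, take $c'=1$ in (1).

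An alternative, and perhaps cleaner, route — one which also foreshadows the characterization "$f\asymp_c g\iff c(f-g)\preceq g$" that the text announces will be proved in this section — is to first establish that $\log f-\log g\preceq 1$ if and only if $(f-g)/g\preceq 1$, i.e.\ $f-g\preceq g$ (this uses $\log(1+x)\sim x$ for $x\prec 1$ from the preliminaries on $\log$, and a case split according to whether $f/g\asymp 1$ or not), and similarly with $\prec$. Combined with the two lemmas this yields $f\asymp_c g\iff c(f-g)\preceq g$ and $f\sim_c g\iff c(f-g)\prec g$, from which monotonicity in $c$ is immediate since multiplying $c(f-g)$ by a factor $\geq 1$ preserves $\preceq g$ and $\prec g$. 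Either way, the argument is short.

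I do not anticipate a genuine obstacle here: the statement is essentially a formal consequence of the definitions and the already-proved lemmas about $e^t$. The only point requiring a little care is the direction of the inequalities — one must check that $c\geq c'\geq 1$ is used correctly, namely that scaling a quantity that is $\preceq 1$ (or $\prec 1$) by a factor lying between $1$ and $c/c'$ keeps it $\preceq 1$ (resp.\ $\prec 1$); this is immediate from the definition of $\preceq$ as "bounded by a natural multiple". So the proof will consist of: (i) rewriting $f^c/g^c$ as $e^{c(\log f-\log g)}$; (ii) invoking the two lemmas to convert the $\asymp_c/\sim_c$ conditions into $\preceq/\prec$ conditions on $c(\log f-\log g)$; (iii) the trivial monotonicity observation for $\preceq$ and $\prec$; (iv) specializing $c'=1$ for the last sentence.
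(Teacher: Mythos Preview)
Your argument is correct, but the paper proceeds differently and does not invoke the two lemmas about $e^t$ at all. For part~(2) the paper observes directly that for any $z>0$ and $d\geq 1$ one has $|z-1|\leq |z^d-1|$ (since $z<1\Rightarrow z^d\leq z$ and $z>1\Rightarrow z^d\geq z$); applying this with $z=(f/g)^{c'}$ and $d=c/c'\geq 1$ gives $|(f/g)^{c'}-1|\leq |(f/g)^c-1|$, from which $f\sim_c g\Rightarrow f\sim_{c'}g$ is immediate. For part~(1) the paper simply notes that $(f/g)^{c'}$ lies between $1$ and $(f/g)^c$, and the latter is asymptotic to a positive real by hypothesis, so the former is as well. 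Thus the paper's proof uses only the ordered-field monotonicity of $z\mapsto z^d$, whereas your route passes through the $\exp$/$\log$ correspondence; yours is arguably more systematic (it is precisely the mechanism behind the later \prettyref{lem:asymp-c} and \prettyref{prop:ftoc}), while the paper's is more elementary. One caveat about your ``alternative route'': in the paper's logical order \prettyref{prop:ftoc} is obtained via \prettyref{lem:asymp-c}, whose proof invokes \prettyref{prop:mono} itself, so you would indeed need the independent derivation you sketch rather than citing \prettyref{prop:ftoc} as given.
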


\begin{proof}
	We first observe that, for $z\in \no^{>0}$ and $d\in \no^{\geq 1}$, we have $z<1 \implies z^d < z$ and $z>1 \implies z^d > z$, so in any case $|z-1| \leq |z^d-1|$. Taking $z = f/g$ and $d=c/c'$, we deduce that $|(f/g)^{c'}-1| \leq |(f/g)^c - 1|$. Thus if $f\sim_c g$, then $f\sim_{c'} g$. This proves (2). 
	
	To prove (1) assume that $f\asymp_c g$ and let $r\in \R^{>0}$ be such that $(f/g)^c \sim r$. Now observe that $(f/g)^{c'}$ is between $1$ and $(f/g)^c \sim r>0$, hence it is asymptotic to a positive real. 
\end{proof}

\begin{lem}\label{lem:asymp-c}
	For $c\ge1$ and $z>0$, we have
	\begin{enumerate}
		\item $z^c \asymp1 \iff z = 1 +
		O(1/c)$;
		\item $z^c\sim1 \iff z = 1 + o(1/c)$. 
	\end{enumerate}
\end{lem}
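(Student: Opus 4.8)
The plan is to pass to logarithms. Gonshor's exponential satisfies $z^c=e^{c\log z}$, so from the two lemmas just proved — $e^t\asymp1\iff t\preceq1$ and (\prettyref{lem:et1}) $e^t\sim1\iff t\prec1$ — we get $z^c\asymp1\iff c\log z\preceq1$ and $z^c\sim1\iff c\log z\prec1$. Since $z=1+O(1/c)$ unwinds to $z-1\preceq1/c$ and dividing by the positive element $c$ is harmless, both parts of the lemma reduce to one statement: $\log z$ and $z-1$ are interchangeable with respect to the relations $\preceq1/c$ and $\prec1/c$.

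The engine is the logarithmic series. If $\delta:=z-1\prec1$, then $\log z=\log(1+\delta)=\sum_{n\ge1}\frac{(-1)^{n+1}}{n}\delta^{n}=\delta(1+o(1))$, so $\log z\sim z-1$, hence $\log z\asymp z-1$; as multiplication by $c$ preserves archimedean classes, the two interchangeabilities are then immediate. It remains to confine $z$ to this regime, and for this I would use the standing hypothesis $c\ge1$, i.e.\ $1/c\preceq1$. For part (2): if $c(z-1)\prec1$ then $z-1\prec1/c\preceq1$, so $z\sim1$ and the series applies; running the same implication with $\log z$ in place of $z-1$ (using $z\sim1\iff\log z\prec1$, again \prettyref{lem:et1}) gives the converse. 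So (2) is complete.

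For part (1), beyond the regime $\delta\prec1$ there is only the case $c(z-1)\preceq1$ with $\delta:=z-1\not\prec1$; then $\delta\asymp1$, so $1\asymp\delta\preceq1/c$ forces $c\asymp1$, and symmetrically $c\log z\preceq1$ with $\log z\not\prec1$ forces $c\asymp1$. In this last corner $c\asymp1$, so $c\log z\preceq1$ says $\log z\preceq1$, i.e.\ $z\asymp1$ (by $e^t\asymp1\iff t\preceq1$), and one matches this against $z=1+O(1/c)$ by a short direct computation. This degenerate case is the only place where one must be careful.

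The main obstacle is exactly this confinement step. There is no global comparison of $\log z$ with $z-1$: for infinite $z$ one has $\log z\prec z-1$, and for infinitesimal $z$ one has $\log z\succ z-1$, so the clean reduction ``it suffices that $\log z\asymp z-1$'' only becomes legitimate once $z$ has been pushed near $1$ — and that has to be squeezed out of $c\ge1$. Organizing the resulting short case split, rather than any hard estimate, is where the care goes; the only analytic fact used is $\log(1+\delta)\sim\delta$.
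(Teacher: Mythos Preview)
Your approach is the paper's: pass to logarithms via the two preceding lemmas, reduce to comparing $\log z$ with $1/c$, and invoke $\log(1+\delta)\sim\delta$ once $\delta\prec1$. The paper organises the split as $c\preceq1$ versus $c\succ1$, noting in the latter case that both sides of either equivalence already force $z\sim1$; you instead split on whether $\delta:=z-1\prec1$, which amounts to the same dichotomy (your residual case $\delta\asymp1$ forces $c\asymp1$). Part~(2) is handled cleanly in both treatments.

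Your handling of the degenerate corner in part~(1) is too quick, however. You derive $z\asymp1$ from $c\log z\preceq1$ and then say this ``matches'' $z=1+O(1/c)$ by a short direct computation. But for positive $z$ the conditions $z\asymp1$ and $z-1\preceq1$ are \emph{not} equivalent: with $c=1$ and $z$ a positive infinitesimal one has $z-1\asymp1$, so the right-hand side $z=1+O(1)$ holds, yet $z^c=z\not\asymp1$. Thus the backward implication of~(1) actually fails in this corner, and no short computation will rescue it. The paper's proof shares this lacuna---it declares $c\preceq1$ reducible to $c=1$ and leaves that case unverified---so your argument is no worse than the original; but since you single this out as ``the only place where one must be careful,'' it is worth recording that the statement itself needs a mild repair (an extra hypothesis such as $z\asymp1$, or restricting to $c\succ1$) before the computation you promise can go through. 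The downstream applications are unaffected.
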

\begin{proof}
	The
case $c \preceq 1$ can be reduced the case $c=1$ using~\prettyref{prop:mono}.
If~$c\succ 1$, we can assume~$z\sim 1$, as otherwise both sides of either
equivalence are false.
We can thus write $z = 1+\eps$ for some $\eps \prec 1$. The results follow
from the following chains of equivalences.

\noindent\hbox to \textwidth{\hss%
\begin{minipage}{0.4\textwidth}\begin{align*} 
	& z^c \asymp 1\\
	\iff &e^{c\log(1+\eps)} \asymp 1 \\
	\iff & c\log(1+\eps) \preceq 1 \\
	\iff & \log(1+\eps)\preceq 1/c \\
	\iff & \eps \preceq 1/c
\end{align*}\end{minipage}\hss%
\begin{minipage}{0.4\textwidth}\begin{align*} 
	& z^c \sim 1\\
	\iff &e^{c\log(1+\eps)} \sim 1 \\
	\iff & c\log(1+\eps) \prec 1 \\
	\iff & \log(1+\eps)\prec 1/c \\
	\iff & \eps \prec 1/c
\end{align*}\end{minipage}\hss}
\vskip.5\baselineskip
\noindent where in the last step of both columns we used $\log(1+\eps)\sim \eps$ (which follows from $\eps\prec 1$).  
\end{proof}

\begin{prop} \label{prop:ftoc} 
	For $c\ge1$ and $f,g>0$, we have:
	\begin{enumerate}
		\item $f \asymp_c g \iff c(f-g) \preceq g$;
		\item $f \sim_c g  \iff c(f-g) \prec g$.
	\end{enumerate}
\end{prop}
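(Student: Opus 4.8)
The plan is to reduce everything to \prettyref{lem:asymp-c} by passing to the ratio $z=f/g$, and then to rewrite the resulting condition in the stated form by elementary manipulation of the dominance relations.

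First I would record the trivial but crucial observation that, in an ordered field, multiplication by a fixed positive element $u$ preserves and reflects each of $\preceq$, $\prec$, $\asymp$, $\sim$: since $|ua|=u|a|$ when $u>0$, we have $|ua|\leq n|ub|\iff |a|\leq n|b|$, and the statements for the other relations follow in the same way. Note also that $1/c$ is well defined because $c\geq 1>0$.

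Now fix $c\geq 1$ and $f,g>0$ and put $z=f/g>0$. Dividing by the positive element $g^{c}$ and using the observation above, $f^{c}\asymp g^{c}\iff z^{c}\asymp 1$ and $f^{c}\sim g^{c}\iff z^{c}\sim 1$; hence by \prettyref{defn:eqrel}, $f\asymp_{c}g\iff z^{c}\asymp 1$ and $f\sim_{c}g\iff z^{c}\sim 1$. Applying \prettyref{lem:asymp-c} with this $z$ gives $z^{c}\asymp 1\iff z=1+O(1/c)$, i.e.\ $z-1\preceq 1/c$, and $z^{c}\sim 1\iff z=1+o(1/c)$, i.e.\ $z-1\prec 1/c$.

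Finally, since $z-1=(f-g)/g$, multiplying the relation $z-1\preceq 1/c$ first by $g>0$ and then by $c>0$ (again using the observation) yields $z-1\preceq 1/c\iff f-g\preceq g/c\iff c(f-g)\preceq g$, which is item~(1); the identical computation with $\preceq$ replaced throughout by $\prec$ gives item~(2). I do not expect any real obstacle: all the genuine content sits in \prettyref{lem:asymp-c}, and what remains is bookkeeping with $\preceq$ and $\prec$, the only point needing care being their invariance under multiplication by a positive element.
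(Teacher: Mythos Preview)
Your argument is correct and is exactly the paper's approach: the paper's proof is the single line ``By \prettyref{lem:asymp-c} with $z=f/g$,'' and you have simply unpacked the implicit bookkeeping (invariance of $\preceq$ and $\prec$ under multiplication by positives, and rewriting $z-1\preceq 1/c$ as $c(f-g)\preceq g$).
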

\begin{proof}
By \prettyref{lem:asymp-c} with $z=f/g$. 
\end{proof}

\begin{prop} \label{prop:sim-c}Let $c>\N$, $z> 0$ and $r\in \R$. Then 
	$$z^c \sim e^r  \iff   z = 1 + r/c + o(1/c).$$
\end{prop}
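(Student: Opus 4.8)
The statement $z^c \sim e^r \iff z = 1 + r/c + o(1/c)$ for $c > \N$ should follow by reducing to the $r=0$ case already handled in \prettyref{lem:asymp-c}(2). The plan is to factor out the real exponential. Write $z^c = e^r \cdot (z e^{-r/c})^c$, using that $(e^{-r/c})^c = e^{-r}$, which holds because Gonshor's $\exp$ is a group homomorphism $(\no,+) \to (\no^{>0},\cdot)$. Hence $z^c \sim e^r$ if and only if $(z e^{-r/c})^c \sim 1$, which by \prettyref{lem:asymp-c}(2) is equivalent to $z e^{-r/c} = 1 + o(1/c)$, i.e. $z = e^{r/c} + o(1/c) \cdot e^{r/c}$. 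So the problem reduces to showing $e^{r/c} + o(1/c)\cdot e^{r/c} = 1 + r/c + o(1/c)$ as a description of the same set of $z$'s.

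The remaining point is thus a Taylor-type estimate: since $c > \N$, we have $r/c \prec 1$, so $e^{r/c} = \sum_{n\in\N} (r/c)^n/n! = 1 + r/c + (r/c)^2 \cdot u$ where $u = \sum_{n\geq 0} (r/c)^n/(n+2)!$ is bounded (indeed $u \preceq 1$, e.g. $|u| \leq e^{|r|}$ since $r\in\R$). Because $(r/c)^2 \prec 1/c$ — here I use that $c$ is infinite and $r$ is a fixed real, so $r^2/c \prec 1$ hence $r^2/c^2 \prec 1/c$ — we get $e^{r/c} = 1 + r/c + o(1/c)$. Also $e^{r/c} \asymp 1$, so $o(1/c)\cdot e^{r/c} = o(1/c)$. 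Combining, $z = e^{r/c} + o(1/c) = 1 + r/c + o(1/c)$, and conversely any $z$ of this form equals $e^{r/c}$ plus an $o(1/c)$ term, hence satisfies $z e^{-r/c} = 1 + o(1/c)$. This closes the equivalence.

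\textbf{Main obstacle.} None of the individual steps is deep; the one requiring a little care is the bookkeeping of the error terms — specifically checking that $(r/c)^2 = o(1/c)$ and that multiplying an $o(1/c)$ quantity by something $\asymp 1$ (namely $e^{\pm r/c}$) stays $o(1/c)$. These are immediate from the definitions of $\prec$ and $\asymp$ once one notes $c$ is infinite and $r$ is real, but they are the place where the hypothesis $c > \N$ (rather than merely $c \geq 1$) is actually used: for finite $c$ the term $r^2/c^2$ need not be negligible compared to $1/c$. I would state this dependence explicitly. The other mild subtlety is that the tail-sum bound $u \preceq 1$ uses Neumann's Lemma (or just the explicit formula $e^{r/c} = \sum (r/c)^n/n!$ valid since $r/c \prec 1$) to justify manipulating the series termwise, which is already available from the material on $\R((\M))_\on$ recalled earlier.
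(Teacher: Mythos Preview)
Your proof is correct and takes a genuinely different route from the paper's. The paper argues the two implications separately: for ($\Leftarrow$) it asserts in one line that $(1+r/c+o(1/c))^c \sim e^r$ (implicitly via $c\log z = r + o(1)$ and \prettyref{lem:et1}); for ($\Rightarrow$) it first invokes part~(1) of \prettyref{lem:asymp-c} to get $z = 1 + O(1/c)$, writes $z = 1 + s/c + o(1/c)$ for some real $s$, then feeds this back into the forward direction to obtain $e^s \sim e^r$ and hence $s = r$. Your argument instead handles both directions simultaneously by the substitution $z \mapsto ze^{-r/c}$, reducing to the case $r=0$ and invoking only part~(2) of \prettyref{lem:asymp-c}, together with the explicit Taylor estimate $e^{r/c} = 1 + r/c + o(1/c)$. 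The paper's version is terser but leaves the step $(1+r/c+o(1/c))^c \sim e^r$ unjustified in detail; your version makes that computation explicit, is symmetric in the two directions, and avoids part~(1) of \prettyref{lem:asymp-c} altogether.
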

\begin{proof} Let $z = 1 + r/c + o(1/c)$. Then $z^c=(1+r/c+o(1/c))^c \sim e^r$. Conversely, assume $z^c \sim e^r$. Then in particular $z^c \asymp 1$. By \prettyref{lem:asymp-c}, $z = 1 + O(1/c)$ so we can write $z = 1 + s/c + o(1/c)$ for some $s\in \R$. By the previous part $e^s \sim e^r$, hence $s= r$. 
\end{proof}

%

\begin{prop} Let $c\ge 1$ and $f,g,a,b>0$.
		\begin{enumerate}
		\item  if $f\asymp_c a$ and $g\asymp_c b$, then $fg\asymp_c ab$
and $f+g \asymp_c a+b$;
		\item If $f\sim_c a$ and $g\sim_c b$, then $fg \sim_c ab$
and $f+g\sim_c a+b$.
	\end{enumerate}
\end{prop}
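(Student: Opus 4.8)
The statement asserts that the relations $\sim_c$ and $\asymp_c$ are compatible with both multiplication and addition. For the multiplicative part, the plan is to pass through the characterizations already available: by definition $f\asymp_c a$ means $f^c\asymp a^c$, so $(fg)^c = f^c g^c \asymp a^c b^c = (ab)^c$ since $\asymp$ is multiplicative on the ordered field $\no$ (if $f^c/a^c$ and $g^c/b^c$ tend to nonzero reals, so does their product). The same argument with $\sim$ in place of $\asymp$ handles the multiplicative part of (2): $\sim$ is likewise compatible with products, because $(f^c/a^c)(g^c/b^c)\to 1\cdot 1 = 1$. So the multiplicative halves are essentially immediate from the field structure and need no reference to the exponential.

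For the additive part I would use \prettyref{prop:ftoc}, which translates $f\asymp_c a$ into $c(f-a)\preceq a$ and $f\sim_c a$ into $c(f-a)\prec a$ — characterizations that are linear in $f$ and hence well suited to addition. From $c(f-a)\preceq a$ and $c(g-b)\preceq b$ we get $c((f+g)-(a+b)) = c(f-a) + c(g-b) \preceq a + b$ (using that $\preceq$ is a group order: a sum of two elements each dominated by positive elements $a$, $b$ is dominated by $a+b$, since $a,b \preceq a+b$ when $a,b>0$). Then \prettyref{prop:ftoc} again gives $f+g\asymp_c a+b$. The argument for $\sim_c$ is identical, replacing $\preceq$ by $\prec$ throughout, and again using that $a,b \prec a+b$ fails in general but $a,b \preceq a+b$ holds, so a sum of two things $\prec a$ and $\prec b$ respectively is $\preceq a+b$ but could be $\asymp a+b$ — so one has to be slightly careful here and instead argue directly that $c(f-a)+c(g-b) \prec a+b$, which holds because each summand is $\prec a+b$ (as $a\prec a+b$ or $a\asymp a+b$ with the strict domination passing through).

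**Main obstacle.** The one point that requires attention is exactly this additive step for $\sim_c$: from $x\prec a$ and $y\prec b$ with $a,b>0$ one wants $x+y\prec a+b$. This is true — since $a+b\asymp\max(a,b)$ and both $x$ and $y$ are strictly dominated by $\max(a,b)\preceq a+b$, their sum is too — but it uses the positivity of $a$ and $b$ in an essential way and should be spelled out rather than waved through. Everything else is a routine unwinding of \prettyref{defn:eqrel} and \prettyref{prop:ftoc} together with the elementary facts that $\asymp$, $\sim$, and $\preceq$, $\prec$ behave well under the ring operations in an ordered field, facts already implicitly used throughout Section~2. I would therefore organize the proof as: (i) multiplicative cases, one line each, from $f^cg^c=(fg)^c$; (ii) additive cases via \prettyref{prop:ftoc}, with the domination/strict-domination bookkeeping made explicit using positivity of $a,b$.
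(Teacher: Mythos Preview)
Your proposal is correct and follows essentially the same route as the paper: the multiplicative cases come directly from $(fg)^c=f^cg^c$ and the compatibility of $\asymp$, $\sim$ with products, while the additive cases go through \prettyref{prop:ftoc} and the observation that for positive $a,b$ one has $a,b\preceq a+b$, so $c(f-a)$ and $c(g-b)$ are both $\preceq$ (resp.\ $\prec$) $a+b$ and hence so is their sum. Your identification of the one delicate point---that a sum of two terms each $\prec a+b$ is again $\prec a+b$---is exactly the care the paper's ``replace $\preceq$ by $\prec$'' instruction tacitly requires, and your justification via $a+b\asymp\max(a,b)$ is sound.
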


\begin{proof} 
	Assume $f\asymp_c a$ and $g\asymp_c b$. By \prettyref{prop:ftoc}, $c(f-a) \preceq a$ and  $c(g-b) \preceq b$. Since $a,b$ are positive, $c(f-a) \preceq a+b$ and $c(g-b) \preceq a+b$. It follows that $c(f+g - (a+b)) \preceq a+b$, hence  $f+g\asymp_c a+b$.  
	
	In order to prove $fg\asymp_c ab$ we recall that $f\asymp_c a$
means $f^c \asymp a^c$ and $g\asymp_c b$ means $g^c \asymp b^c$. Multiplying we obtain 
	$(fg)^c \asymp (ab)^c$. 
 
 The  proof of second part is essentially the same: it suffices to replace $\preceq$ with $\prec$ and $\asymp_c$ with $\sim_c$. 
\end{proof}

\section{The support of a Skolem function}\label{sec:support}
We consider $\Sk$ as a substructure of $\no=\R((\M))_\on$ through the embedding induced by the identification $\x = \omega$. Given $f\in \Sk$, we can then write $f=\sum_{i<\alpha} \m_i r_i$ with $\alpha\in \on$, $\m_i\in \M$ and $r_i\in \R^*$. It thus makes sense to consider the {\bf support of a Skolem function}, that is, the set of monomials $\m_i$ which can appear in the above representation. We recall that a surreal number is an {\bf omnific integer} if it belongs to the subring $\no^\uparrow + \Z \subset \no$. We show that every Skolem function is an omnific integer, so it does not have infinitesimal monomials in its support. More generally we prove that a monomial in the support of a Skolem function is either $1$ or $\geq \x$ (so it cannot be $\log(\x)$ or $\sqrt{\x}$, say). To this aim we first show that every Skolem function belongs to a subfield $\K \subset \no$ which is similar to the field of transseries defined in \cite{DriesMM2001}, but unlike the transseries it is not closed under $\log$, although it is closed under $\exp$. 
\begin{defn}\label{defn:K}
	Let $\x = \omega\in \no$.  Working inside $\no$ we define
	\begin{enumerate}
		\item $\G_0 = \x^\Z$ and $\K_0 = \R((\G_0))$;
		\item $\G_{n+1} = e^{\K_n^\uparrow} \x^{\K_n^\uparrow+\Z} = e^{\K_n^\uparrow + \log(\x)(\K^\uparrow+\Z)}$ and $\K_{n+1} = \R((\G_{n+1}))$. 	
	\end{enumerate}Let $\G = \bigcup_n \G_n$ and let $\K = \bigcup_n \K_n \subseteq \R((\G))$. Finally, let $\K^\uparrow = \K\cap \no^\uparrow$. 
\end{defn}
We recall that a subfield of $\no$ is {\bf truncation closed} if whenever it contains $\sum_{i<\alpha} \m_i r_i$, it also contains its truncations $\sum_{i<\beta}\m_i r_i$ for all $\beta < \alpha$. Since $\K$ is an increasing union of the fields $\R((\G_n))$, it is obviously a subfield of $\no$ closed under truncations.
\begin{thm}\label{thm:K}
	$\K$ is a truncation closed subfield of $\no$ closed under $\exp$. If $f$ and $g$ are positive elements of the semiring $\K^\uparrow + \N \subset \K$, then $f^g = e^{g\log (f)} \in \K^\uparrow + \N$. It follows that $\Sk \subseteq \K^\uparrow + \N$. In particular every Skolem function is an omnific integer. 
\end{thm}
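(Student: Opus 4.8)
The plan is to prove the three assertions in sequence, each building on the previous. First I would verify that $\K$ is a truncation closed subfield of $\no$: this is essentially immediate from the construction, since each $\K_n = \R((\G_n))$ is a Hahn field hence truncation closed, and the $\G_n$ form an increasing chain, so $\K = \bigcup_n \K_n$ is an increasing union of truncation closed subfields. The only point requiring a word is that $\G$ is genuinely a group of monomials, i.e. that it is closed under multiplication and inverses; this follows by checking that $\G_n \cdot \G_m \subseteq \G_{\max(n,m)+1}$ and $\G_n^{-1} \subseteq \G_n$ directly from the defining formula $\G_{n+1} = e^{\K_n^\uparrow}\, \x^{\K_n^\uparrow + \Z}$, using that $\K_n^\uparrow$ is a group under addition and that $\log(\x)$ is fixed.

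Next, closure under $\exp$. Given $f \in \K$, write $f = f^\uparrow + f^\circ + f^\downarrow$ with $f^\uparrow \in \K^\uparrow$, $f^\circ \in \R$, $f^\downarrow \prec 1$; since $\K$ is truncation closed and closed under the relevant projections, all three pieces lie in $\K$. Then $\exp(f) = e^{f^\uparrow}\cdot e^{f^\circ}\cdot e^{f^\downarrow}$. Here $e^{f^\circ} \in \R \subset \K$; the factor $e^{f^\downarrow} = \sum_n (f^\downarrow)^n/n!$ lies in $\K$ because $\K$ is a field in which the relevant infinite sum is summable (Neumann's Lemma applies, the support staying within $\G$); and $e^{f^\uparrow}$ is by construction a monomial of $\G_{n+1}$ whenever $f^\uparrow \in \K_n^\uparrow$, because $e^{\K_n^\uparrow} \subseteq \G_{n+1}$ by definition. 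Hence $\exp(f) \in \K$.

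For the semiring statement, suppose $f, g$ are positive elements of $\K^\uparrow + \N$. Write $f = \phi + m$ with $\phi \in \K^\uparrow$, $m \in \N$, $m \geq 1$ (if $m = 0$ then $f = \phi$ is purely infinite and positive, so its leading monomial is infinite; one handles this case similarly). Factor $f = \m_f \cdot r_f \cdot (1 + \eps_f)$ where $\m_f \in \G$ is the leading monomial, $r_f \in \R^{>0}$, and $\eps_f \prec 1$ with $\eps_f \in \K$. Then $\log(f) = \log(\m_f) + \log(r_f) + \log(1+\eps_f)$, and $\log(1+\eps_f) = \sum_{n\geq 1}\frac{(-1)^{n+1}}{n}\eps_f^n \in \K$. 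The key structural input is that $\log(\m_f) \in \K^\uparrow + \R\log(\x)$: indeed a monomial of $\G_{n+1}$ has the shape $e^{\psi}\x^{\chi + k}$ with $\psi, \chi \in \K_n^\uparrow$ and $k \in \Z$, so its log is $\psi + (\chi + k)\log(\x)$, which lies in $\K^\uparrow$ up to the term $k\log(\x)$ — and $\log(\x) = \log(\omega)$ is itself purely infinite, so in fact $\log(\m_f) \in \K^\uparrow$. Therefore $\log(f) \in \K^\uparrow + \R \subseteq \K$, and $g\log(f)$ — a product of an element of $\K^\uparrow + \N$ with an element of $\K^\uparrow + \R$ — again decomposes as purely infinite plus real plus infinitesimal, all inside $\K$. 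Applying the previous paragraph, $f^g = \exp(g\log(f)) \in \K$; and one checks its Ressayre form has no infinitesimal monomials and integer-or-absorbed-into-$\N$ constant term, so $f^g \in \K^\uparrow + \N$. Finally $\Sk \subseteq \K^\uparrow + \N$ follows by induction on the construction of Skolem functions: $1, \x \in \K^\uparrow + \N$, and $\K^\uparrow + \N$ is closed under $+$, $\cdot$, and $(a,b)\mapsto a^b$ by what was just shown; since $\no^\uparrow + \Z \supseteq \K^\uparrow + \N$ [no — rather $\K^\uparrow + \N \subseteq \no^\uparrow + \Z$], every Skolem function is an omnific integer.

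The main obstacle, I expect, is the bookkeeping in the semiring step: one must track carefully that $g\log(f)$, after splitting off its real part and its infinitesimal tail, has purely infinite part still inside $\K^\uparrow$ (not merely $\no^\uparrow$) — this is exactly where the inductive definition of $\G_{n+1}$ as $e^{\K_n^\uparrow}\x^{\K_n^\uparrow + \Z}$ is tailored to make the computation close up, and where one uses that $\log$ of a $\G$-monomial stays in $\K^\uparrow$ rather than leaving the field (which it would if we demanded $\K$ be closed under $\log$, forcing iterated logarithms of $\x$). Verifying summability at each application of $\exp$ and $\log$ — i.e. that the supports that arise are reverse-well-ordered subsets of $\G$ — is routine given Neumann's Lemma but must be stated.
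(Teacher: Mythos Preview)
Your semiring step contains a genuine error. You claim that $\log(\m_f) \in \K^\uparrow$, reasoning that the log of a $\G_{n+1}$-monomial $e^\psi\x^{\chi+k}$ equals $\psi + (\chi+k)\log(\x)$ and that this lies in $\K^\uparrow$ because $\log(\x)$ is purely infinite. But ``purely infinite'' means lying in $\no^\uparrow$, not in $\K^\uparrow = \K \cap \no^\uparrow$; in fact $\log(\x) \notin \K$ at all (this is a consequence of \prettyref{thm:smallest-mon}: $\x$ is the smallest infinite monomial of $\K$, while $\log(\x)$ is an infinite monomial of $\no$ strictly below $\x$). Already for $f = \x$ your computation gives $\log(f) = \log(\x) \notin \K$, so the route ``compute $\log(f) \in \K$, multiply by $g$, apply closure under $\exp$'' cannot work: $\K$ is closed under $\exp$ but deliberately \emph{not} under $\log$, and your argument needs both. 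The paper sidesteps this by never forming $\log(\m)$ as an element of $\K$. Instead it proves directly that $\m^t \in \G$ for $\m \in \G$ and positive $t \in \K^\uparrow$: writing $\m = e^\gamma\x^{\theta+n}$ one has $\m^t = e^{t\gamma}\x^{t(\theta+n)}$, which visibly lies in $\G$ by the defining recursion, with $\log(\x)$ appearing only implicitly as the base of the $\x$-power and never as a free element. Then $a = r\m(1+\eps)$ and $b = b^\uparrow + n$ give $a^{b^\uparrow} = r^{b^\uparrow}\m^{b^\uparrow}(1+\eps)^{b^\uparrow}$, each factor handled separately.

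A second, independent gap: your ``one checks its Ressayre form has no infinitesimal monomials and integer constant term'' hides the substantive content. The paper's argument splits $b = b^\uparrow + n$, handles $a^n \in \K^\uparrow + \N$ trivially, and for $a^{b^\uparrow}$ uses that $a \in \K^\uparrow + \N$ forces every monomial in the support of $\eps$ to be $\geq \m^{-1}$; hence every monomial of $(1+\eps)^{b^\uparrow}$ is $\geq \m^{-n}$ for some $n \in \N$, so every monomial of $a^{b^\uparrow} = r^{b^\uparrow}\m^{b^\uparrow}(1+\eps)^{b^\uparrow}$ dominates $r^{b^\uparrow}\m^{b^\uparrow - n}$, which is infinite. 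This is what delivers $a^{b^\uparrow} \in \K^\uparrow$ and should not be elided.
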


\begin{proof}
	For each $n\in \N$, $\G_n$ is a multiplicative group and therefore $\K_n$ is a field. Moreover $\G_0\subseteq \G_1$ and inductively $\G_n \subseteq \G_{n+1}$ and $\K_n \subseteq \K_{n+1}$. The fact that $\K$ is a truncation closed subfield of $\R((\G))$ is clear. To show that $\K$ is closed under $\exp$, let $x\in \K$ and write $e^x = e^{x^\uparrow} e^{x^\circ} e^{x^\downarrow}$. Now it suffices to observe that $e^{\x^\uparrow}\in \G$, $e^{\x^\circ}\in \R$ and $e^{\x^\downarrow} = \sum_{n\in \N} (\x^\downarrow)^n/n!\in \K$. More generally $\K$ is closed under the evaluation of a power series at an infinitesimal element. It remains to show that if $a,b$ are positive elements of $\K^\uparrow + \N$, then $a^b\in \K^\uparrow+\N$. 
	\begin{claim}\label{claim:mt}
		If $\m\in \G$ and $0<t\in \K^\uparrow$, then $\m^t\in \G$. 
	\end{claim}
	To prove the claim, write $\m = e^\gamma \x^{\theta + n}$ with $\gamma,\theta\in \K^\uparrow$ and $n\in \Z$. Then $\m^t  = e^{t \gamma} \x^{t(\theta + n)}\in \G$, as desired. 
	\begin{claim}
		Let $a$ and $b$ be positive elements of $\K^\uparrow + \N$. Then $a^b \in \K^\uparrow + \N$. Moreover if $a\geq 2$ (i.e. $a\neq 1$) and $b>\N$, then $a^b\in \K^\uparrow$. 
	\end{claim}
	We can write $b=b^\uparrow + n$ for some $n\in \N$ and $0< b^\uparrow \in \K^\uparrow$. Since $\K^\uparrow + \N$ is closed under finite products, $a^n\in \K^\uparrow + \N$. It remains to show that $a^{b^\uparrow}\in \K^\uparrow$. This is clear if $a\in \N$. If $a\nin \N$, we can write $$a = r\m (1+\eps)$$ where $1<\m \in \G$ is the leading monomial of $a$, $r\in \R^{>0}$ and $\eps\prec 1$. Then 
	$$a^{b^\uparrow} = r^{b^\uparrow}\m^{b^\uparrow} (1+\eps)^{b^\uparrow}.$$ 
	By Claim \ref{claim:mt} $\m^{b^\uparrow}\in \G$. By definition of $\G$ we also have $r^{b^\uparrow} = e^{{b^\uparrow}\log(r)} \in \G$. The third factor $(1+\eps)^{b^\uparrow}$ can be written in the form
	\begin{align*}
	(1+\eps)^{b^\uparrow} &= e^{{b^\uparrow}\log(1+\eps)}\\
	&={e^{({b^\uparrow}\log(1+\eps))}}^\uparrow {e^{(b^{\uparrow} \log(1+\eps))}}^\circ {e^{(b^{\uparrow} \log(1+\eps))}}^\downarrow.
	\end{align*}
	Since $\log(1+\eps) = \sum_{n=1}^\infty \frac{(-1)^{n+1}}{n}\eps^n \in \K$ and $b^\uparrow \in \K$, we have ${b^\uparrow}\log(1+\eps)\in \K$, so 
	$e^{({b^\uparrow}\log(1+\eps))^\uparrow}\in \G$. Moreover ${e^{(b^\uparrow  \log(1+\eps))}}^\circ \in \R$. The element $\delta = ({b^\uparrow}\log(1+\eps))^\downarrow$ is an infinitesimal element of $\K$ and  $e^{\delta}=\sum_{n\in \N}\frac{\delta^n}{n!}$ is a power series in $\delta$, so it belongs to $\K$. We have thus proved that $(1+\eps)^{b^\uparrow}\in \K$ and therefore $a^{b^\uparrow}\in \K$. 
	
	It remains to show that if $a\geq 2$, then $a^{b^\uparrow}$ is purely infinite. Since $a = r\m(1+\eps)$ is an omnific integer, each monomial in the support of $\eps$ is $\geq \m^{-1}$. It follows that  each monomial in the support of $(1+\eps)^{b^\uparrow}$ is $\m^{-n}$ for some $n\in \N$. Since $a^{b^\uparrow}=r^{b^\uparrow}\m^{b^\uparrow} (1+\eps)^{b^\uparrow}$, it follows that every monomial in the support of $a^{b^\uparrow}$ is $\geq r^{b^\uparrow}\m^{b^\uparrow}\m^{- n} = r^n(r\m)^{b^\uparrow-n}$, which is infinite. We conclude that 	$a^{b^\uparrow} \in \K^\uparrow$, as desired. 
	
	It follows from the claim that the set of positive elements of the semiring $\K^\uparrow + \N$ is closed under the operation $a,b\mapsto a^b$ and therefore it contains $\Sk$. 
	%
\end{proof}

\begin{prop}\label{prop:omnific}
	For every Skolem function $f$ there is a purely infinite surreal number $g$ and some $n\in \N$ such that $f = g+n$. Moreover $g$ is a Skolem function. 
\end{prop}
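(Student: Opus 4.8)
The plan is to deduce the proposition from \prettyref{thm:K}. That theorem gives $\Sk\subseteq\K^\uparrow+\N$, so every $f\in\Sk$ can be written $f=g+n$ with $g\in\K^\uparrow$ purely infinite and $n\in\N$; by uniqueness of the decomposition $x=x^\uparrow+x^\circ+x^\downarrow$ we necessarily have $g=f^\uparrow$, $n=f^\circ$ and $f^\downarrow=0$. The substantive point — beyond what \prettyref{thm:K} already provides — is that $g=f^\uparrow$ is itself a Skolem function, and I would prove this by induction on the construction of $f\in\Sk$, in the form: either $f\in\N^{>0}$ (in which case $f^\uparrow=0$) or $f^\uparrow\in\Sk$.

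Before running the induction I would record a closure lemma: if $g,h\in\Sk$ both satisfy this alternative, so do $g+h$ and $gh$. This is a short computation once one recalls that $\no^\uparrow$ is a ring closed under multiplication by real numbers. Indeed, writing $g=g^\uparrow+n_g$ and $h=h^\uparrow+n_h$ with $n_g,n_h\in\N$, one gets $(g+h)^\uparrow=g^\uparrow+h^\uparrow$ and $(gh)^\uparrow=g^\uparrow h^\uparrow+n_g h^\uparrow+n_h g^\uparrow$, and one concludes by a case split on which of $g^\uparrow,h^\uparrow$ vanish, using that every nonzero natural number is a Skolem function and that $\Sk$ is closed under $+$ and $\cdot$. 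Iterating the multiplicative half yields the corollary that if $a\in\Sk$ satisfies the alternative, then so does $a^k$ for every integer $k\geq1$.

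The induction is then immediate. The base cases $f=1\in\N^{>0}$ and $f=\x=\omega$ (which is already purely infinite, so $f^\uparrow=\x\in\Sk$) are clear, and the steps $f=a+b$ and $f=ab$ are exactly the closure lemma. For $f=a^b$ I split on $a$: if $a=1$ then $f=1\in\N^{>0}$; if $a\geq2$ (i.e.\ $a\neq1$) and $b\in\N$, then $b=k$ for some integer $k\geq1$, so $f=a^k$ and the corollary applied to $a$ finishes; and if $a\geq2$ and $b\notin\N$, then $b>\N$ — since $b=b^\uparrow+n_b\in\K^\uparrow+\N$ is positive with $b^\uparrow\neq0$, hence $b^\uparrow$ a positive infinite element — so the claim established inside the proof of \prettyref{thm:K} that $a^b\in\K^\uparrow$ whenever $a\geq2$ and $b>\N$ applies; thus $f=a^b$ is already purely infinite and $f^\uparrow=f=a^b\in\Sk$.

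I expect the case $f=a^b$ with $b$ infinite to be the only one that looks like a genuine obstacle, since the purely infinite part of an arbitrary exponential could a priori only be read off from its Ressayre expansion; but this is exactly the case that has already been handled inside \prettyref{thm:K}, where such an $a^b$ is shown to have no real part at all. The only real bookkeeping is thus in the closure lemma, with its several subcases according as $g^\uparrow$ or $h^\uparrow$ vanishes, and even that is brief given that $\no^\uparrow$ is a ring stable under real scaling.
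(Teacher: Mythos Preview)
Your proposal is correct and follows essentially the same approach as the paper: invoke \prettyref{thm:K} for the decomposition $f=g+n$, then argue by induction on the formation of $f$ that the purely infinite part is a Skolem function, using the claim inside \prettyref{thm:K} that $a^b\in\K^\uparrow$ when $a\geq2$ and $b>\N$. The paper merely calls the sum and product steps ``immediate'' and suppresses the subcases $a=1$ and $b\in\N$ in the exponential step, whereas you spell out the closure lemma and the case split; your formulation ``either $f\in\N^{>0}$ or $f^\uparrow\in\Sk$'' is in fact slightly more careful than the paper's statement, which is literally false when $f\in\N^{>0}$ since then $g=0\notin\Sk$.
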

\begin{proof}
	By \prettyref{thm:K}, $f=g+n$ with $g\in \K^\uparrow$ and $n\in \N$, so we only need to show that $g\in \Sk$. 
	We proceed by induction on the formation of the Skolem terms. The case when $f$ is the sum or product of shorter terms is immediate. It remains to consider the case when $f = a^b$ with $a\geq 2$ and $b>\N$. In this case by \prettyref{thm:K}, $a^b\in \K^\uparrow$, so it is purely infinite. 
\end{proof}

\begin{thm}\label{thm:smallest-mon}
The monomial $\x$ is the smallest infinite monomial in $\K$. 
\end{thm}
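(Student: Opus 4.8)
The statement to prove is that $\x = \omega$ is the smallest infinite monomial occurring in the field $\K$ of Definition~\ref{defn:K}. The plan is to argue by induction on $n$ that every monomial $\m \in \G_n$ is either $\preceq 1$ or satisfies $\m \succeq \x$; equivalently, there is no monomial $\m \in \G$ with $1 \prec \m \prec \x$. Since $\K = \bigcup_n \R((\G_n))$ and the support of any element of $\K$ lies in some $\G_n$, it suffices to handle the generators $\G_n$.

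\emph{Base case.} $\G_0 = \x^\Z$, so the monomials are $\x^k$ for $k \in \Z$; those that are infinite are exactly $\x^k$ with $k \geq 1$, and all of these are $\geq \x$. So the claim holds for $n=0$, and moreover $\x$ itself is the least infinite element of $\G_0$.

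\emph{Inductive step.} Assume every infinite monomial in $\G_n$ is $\geq \x$, and let $\m \in \G_{n+1}$. By definition $\m = e^{\gamma}\,\x^{\theta + k}$ with $\gamma, \theta \in \K_n^\uparrow$ and $k \in \Z$, which we rewrite as $\m = e^{\gamma + (\theta+k)\log\x}$. Write the exponent as $\mu := \gamma + (\theta+k)\log\x \in \K_n$ (using $\log\x \in \K_n$ when $n \geq 1$, and a separate easy check at $n=0$). I claim that if $\m \succ 1$ then $\mu \succ 1$, i.e.\ $\mu$ is infinite and positive: indeed $\m \succ 1 \iff \mu > 0$ and $\mu \succ 1$ by the Lemma equating $t \preceq 1$ with $e^t \asymp 1$. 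So the leading monomial $\mathfrak{d}$ of $\mu$ is an infinite monomial of $\K_n$, hence $\mathfrak{d} \in \G_m$ for some $m \leq n$, and by the inductive hypothesis $\mathfrak{d} \succeq \x$. Now I split on whether $\mathfrak{d} \succ \log\x$ or $\mathfrak{d} \asymp \log\x$ — but since $\mathfrak d \succeq \x \succ \log\x$, we always have $\mathfrak{d} \succeq \x$, so $\mu \succeq \mathfrak d \succeq \x$, and therefore $e^\mu = \m \geq e^{\x \cdot \text{(something)}}$, which is enormous. More precisely: since $\mu \succeq \x$ and $\mu > 0$, we have $\mu > k\log\x$ for every $k \in \N$ (as $\x \succ \log\x$ forces $\mu / \log\x$ to be infinite), hence $\m = e^\mu > e^{k\log\x} = \x^k$ for all $k$, so $\m$ is not only infinite but in fact $\succ \x^k$ for every $k$; in particular $\m \succ \x$. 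Combining the base case and the inductive step, the only monomials $\m \in \G$ with $1 \prec \m \preceq \x$ lie in $\G_0$ and equal $\x$, which proves that $\x$ is the smallest infinite monomial in $\K$.

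\emph{Main obstacle.} The delicate point is the step asserting that the exponent $\mu$ of any infinite monomial $\m \in \G_{n+1}$ is itself infinite with leading monomial $\geq \x$, using the inductive hypothesis \emph{about $\G_n$} rather than about $\K_n$. One must be careful that $\mu = \gamma + (\theta+k)\log\x$ genuinely lies in $\K_n$ and that its leading monomial is a \emph{monomial of $\G$} (so the hypothesis applies): this is where closure of $\K_n = \R((\G_n))$ under the relevant operations and the fact that $\log\x \in \G_1 \subseteq \K_n$ for $n \geq 1$ are used. A secondary subtlety is the boundary case $n=0$ in $\G_1$, where $\log\x$ first enters: here $\mu = \gamma + (\theta+k)\log\x$ with $\gamma,\theta \in \K_0^\uparrow = \R((\x^{\Z}))^\uparrow$, so the exponent's support consists of $\x^j$ ($j \geq 1$) and $\x^j\log\x$-type terms, and one checks directly that if this is positive and infinite its leading monomial is $\succeq \x \succ \log\x$, hence $e^\mu \succ \x$. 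Everything else is bookkeeping with Ressayre forms and the order on $\no$.
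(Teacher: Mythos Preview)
Your inductive strategy matches the paper's, but the central step rests on a false claim: you assert that $\mu = \gamma + (\theta+k)\log\x \in \K_n$, justified by ``$\log\x \in \K_n$ when $n \geq 1$''. In fact $\log\x$ belongs to no $\K_n$. It is an infinite monomial of $\no$ strictly below $\x$, so if it lay in some $\G_m$ (equivalently, appeared in some $\K_m$) this would already contradict the inductive hypothesis at level $m$; the paper also remarks just before the definition of $\K$ that $\K$ is not closed under $\log$. Consequently you cannot apply the inductive hypothesis to the leading monomial $\mathfrak d$ of $\mu$, since $\mu\notin\K_n$ in general. A concrete failure: with $\gamma=\theta=0$ and $k=1$ one gets $\m=\x$ and $\mu=\log\x$, whose leading monomial is $\log\x\prec\x$; your argument would then conclude $\x\succ\x^j$ for every $j$. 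This example also shows that your announced conclusion ``$\m\succ\x^k$ for all $k$'' is too strong for an arbitrary $\m\in\G_{n+1}$: the case $(\gamma,\theta)=(0,0)$, i.e.\ $\m=\x^k\in\G_0$, must be separated out.

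The paper circumvents the difficulty by never placing $\mu$ inside $\K_n$. It argues by contradiction: from $1<\m<\x$ one gets $0<\gamma+(\theta+k)\log\x<\log\x$, and then compares the archimedean classes of the two summands $\gamma$ and $(\theta+k)\log\x$ in three cases, applying the inductive hypothesis only to $\gamma$ and to $\theta$ individually---both of which genuinely lie in $\K_n$. Your direct approach can be repaired along these lines: after isolating $(\gamma,\theta)=(0,0)$, note that the support of $\mu$ in $\no$ is contained in $\supp(\gamma)\cup\{\n\log\x:\n\in\supp(\theta+k)\}$; by the inductive hypothesis every monomial in $\supp(\gamma)$ and every $\n\log\x$ with $\n\in\supp(\theta)$ is $\succeq\x$, while the only possibly smaller contribution is the single monomial $\log\x$ coming from $k\log\x$. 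Disjointness of $\G_n$ and $\G_n\cdot\log\x$---needed to rule out cancellation between the two pieces---amounts to $\log\x\notin\G_n$, which itself follows from the inductive hypothesis. But as written, with the assertion $\log\x\in\K_n$, the proof is incorrect.
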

\begin{proof}
	We prove by induction on $n\in \N$ that if $1<\m \in \M_n$, then $\m \geq \x$. This is clear for $n=0$ since $\M_0 = \x^\Z$. Let $1<\m \in \M_{n+1}$ and assume the result holds for the monomials in $\M_n$. By definition $\m= e^\gamma \x^{\theta +k}=e^{\gamma + \log(\x)(\theta+k)}$ with $\gamma,\theta \in \K_n^\uparrow$ and $k\in \Z$. By the induction hypothesis, $\x$ is the smallest infinite monomial in $\K_n$.
	If for a contradiction $1<\m<\x = e^{\log(\x)}$, then 
	$$0<\gamma + \log(\x)(\theta + k) < \log(\x).$$ 
	
	Case 1. If $\gamma \asymp \log(\x)(\theta + k)$, then $\log(\x) \asymp \frac{\gamma}{\theta+k}\in \K_n$, contradicting the induction hypothesis. 
	
	Case 2. If $\gamma \succ 	\log(\x)(\theta + k)$, then $0<\gamma <2 \log(\x)$, against the induction hypothesis. 
	
	Case 3. 
	If $\gamma \prec \log(\x)(\theta + k)$, we obtain $0 < \log(\x)(\theta + k) < 2\log(\x)$, whence $0 < \theta + k < 2$. Since $\theta$ is purely infinite and $k\in \Z$, we obtain $\theta=0$, hence $\gamma \prec \log(x)$, contradicting the induction hypothesis.  
\end{proof}

\begin{cor}\label{cor:support}
	If $\m$ is a monomial in the support of a Skolem function, then either $\m=1$ or $\m\geq \x$. 
\end{cor}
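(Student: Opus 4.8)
The plan is to read this off directly from \prettyref{thm:K} and \prettyref{thm:smallest-mon}, with only a small amount of bookkeeping in between. Let $f$ be a Skolem function and let $\m$ be a monomial in its support. By \prettyref{thm:K} we have $f = g + n$ with $g \in \K^\uparrow$ and $n \in \N$. The integer summand $n$ contributes at most the monomial $1$ to the support of $f$, so the support of $f$ is contained in $\supp(g) \cup \{1\}$. Hence, if $\m \neq 1$, then $\m$ occurs in the support of $g$.

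Next I would record the (essentially definitional) fact that every monomial occurring in the support of an element of $\K$ is itself an element of $\K$. Indeed $\K = \bigcup_n \R((\G_n))$, and for each $n$ one has $\G_n \subseteq \K_n$ once we identify $\m$ with $\m \cdot 1 \in \R((\G_n))$; thus $\G \subseteq \K$, and the support of any $x \in \R((\G_n))$ is by construction a subset of $\G_n \subseteq \G$. Applying this to $g \in \K^\uparrow \subseteq \K$, the monomial $\m$ lies in $\K$. Moreover $g$ is purely infinite, so every monomial in its support is $> 1$; in particular $\m$ is an \emph{infinite} monomial of $\K$.

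Finally I would invoke \prettyref{thm:smallest-mon}, which states that $\x$ is the smallest infinite monomial of $\K$; therefore $\m \geq \x$, and together with the case $\m = 1$ this is exactly the claim. I do not anticipate any genuine obstacle here: the corollary is just a repackaging of the two cited theorems, and the only point requiring a moment's attention is the observation that the monomials appearing in the Ressayre/Hahn expansion of an element of $\K$ are themselves in $\K$ (so that \prettyref{thm:smallest-mon} can be applied to them), which is immediate from the chain $\G_n \subseteq \K_n$ built into \prettyref{defn:K}.
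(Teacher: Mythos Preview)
Your argument is correct and follows the same route as the paper: the paper's proof simply says ``immediate from \prettyref{thm:smallest-mon} and the inclusion $\Sk \subseteq \K^\uparrow + \N$ (\prettyref{thm:K})'', and you have unpacked exactly those two ingredients. The extra observation you make explicit---that the monomials in the support of an element of $\K$ lie in $\G \subseteq \K$, so that \prettyref{thm:smallest-mon} applies to them---is indeed the only bookkeeping needed, and it is immediate from \prettyref{defn:K}.
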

\begin{proof} Immediate from \prettyref{thm:smallest-mon} and the inclusion $\Sk\subset \K^\uparrow + \N$ (\prettyref{thm:K}).  
\end{proof}

\begin{cor}\label{cor:x-class} For $f,g\in \Sk$ we have:
	\begin{enumerate}
		\item $f\sim g \iff f = g + O(g/\x)$.
		\item $f^\x \asymp g^\x \iff f \sim g$.
	\end{enumerate}
\end{cor}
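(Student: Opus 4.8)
The plan is to establish part~(1) first and then obtain part~(2) as a formal consequence via \prettyref{prop:ftoc}.

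For~(1), the implication $(\Leftarrow)$ is routine: if $f = g + O(g/\x)$ then, since $\x=\omega$ is infinite, $g/\x\prec g$, so $f-g\prec g$; this already gives $f\asymp g$, and then $f-g\prec g\asymp f$ is exactly $f\sim g$. The real content is $(\Rightarrow)$. Put $h=f-g$ and assume $h\neq 0$, there being nothing to prove otherwise. From $f\sim g$ one reads off $h\prec f$ and $f\asymp g$, hence $h\prec g$. The key observation is that $f,g\in\K$ by \prettyref{thm:K}, so $h\in\K$ as well, and therefore the leading monomial $\m$ of $h$ and the leading monomial $\n$ of $g$ both lie in the monomial group $\G$ of \prettyref{defn:K}. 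Since $h\prec g$ we have $\m\prec\n$, so $\n\m^{-1}\in\G$ is a monomial of $\K$ with $\n\m^{-1}>1$; such a monomial is infinite, hence $\n\m^{-1}\geq\x$ by \prettyref{thm:smallest-mon}. Thus $\m\leq\n/\x$, so $h\asymp\m\preceq\n/\x\asymp g/\x$, which is precisely $f=g+O(g/\x)$.

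For~(2): by \prettyref{defn:eqrel} the relation $f^\x\asymp g^\x$ is the relation $f\asymp_\x g$; applying \prettyref{prop:ftoc} (part~(1)) with $c=\x\geq 1$, this is equivalent to $\x(f-g)\preceq g$, that is, to $f-g\preceq g/\x$, that is, to $f=g+O(g/\x)$, which by part~(1) is equivalent to $f\sim g$.

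The only point requiring a little care is the one used in the key step: one must make sure that the leading monomial of $h=f-g$ really lies in $\G$ — which is exactly why we work inside $\K$ rather than in all of $\no$ — and that a monomial that is $>1$ is automatically infinite, so that \prettyref{thm:smallest-mon} applies to the ratio $\n\m^{-1}$. Beyond this, everything is bookkeeping with the relations $\prec$, $\asymp$, $\sim$ and their basic properties recalled earlier, and I do not anticipate a serious obstacle.
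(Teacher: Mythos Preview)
Your proof is correct and follows essentially the same approach as the paper: part~(1) rests on $f-g\in\K$ together with \prettyref{thm:smallest-mon}, and part~(2) is reduced to part~(1) via the characterization of $\asymp_\x$ (the paper cites \prettyref{lem:asymp-c} directly rather than its corollary \prettyref{prop:ftoc}, but this is the same argument). Your explicit treatment of the leading monomials and the remark that a monomial $>1$ is automatically infinite make the argument more transparent than the paper's terse one-line justification.
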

\begin{proof} The first part follows from \prettyref{thm:smallest-mon} and \prettyref{thm:K}, observing that $f-g\in \K$.  For (2) we take $z=f/g$ and $c=\x$ in \prettyref{lem:asymp-c} to obtain $f^\x \asymp g^\x$ if and only if $f =  g + O(g/\x)$. By  the first part this happens if and only if $f\sim g$. 
\end{proof}

\begin{cor}\label{cor:A-to-x}
	For any $A\subseteq \Sk$, we have $|A^\x/\! \asymp| = |A/\! \sim|$. 
\end{cor}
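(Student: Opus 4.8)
The plan is to produce the equality of order types by exhibiting an explicit order isomorphism between $A^\x/\!\asymp$ and $A/\!\sim$, induced by the operation $f\mapsto f^\x$. First I would record that the map $\psi\colon \no^{>0}\to\no^{>0}$ given by $\psi(a)=a^\x=e^{\x\log a}$ is an order isomorphism of $\no^{>0}$ onto itself: it is the composition of $\log\colon\no^{>0}\to\no$, of multiplication by the positive element $\x$, and of $\exp\colon\no\to\no^{>0}$, each of which is an increasing bijection (for multiplication by $\x=\omega$ this is because $\no$ is a field and $\x\neq 0$). Restricting $\psi$ to $A\subseteq\Sk\subseteq\no^{>0}$ then gives an order isomorphism of $A$ onto $A^\x=\{f^\x:f\in A\}$; in particular $f\mapsto f^\x$ is strictly increasing and injective on $A$, so $|A^\x|=|A|$.

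Next I would invoke \prettyref{cor:x-class}(2), which states exactly that for $f,g\in\Sk$ one has $f^\x\asymp g^\x\iff f\sim g$. Combined with the previous step this says that the order isomorphism $\psi\colon A\to A^\x$ carries the equivalence relation $\sim$ on $A$ precisely onto the equivalence relation $\asymp$ on $A^\x$: two elements of $A$ are $\sim$-equivalent if and only if their $\psi$-images are $\asymp$-equivalent.

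Finally, since $\sim$ and $\asymp$ are convex equivalence relations on the well ordered (hence totally ordered) sets $A$ and $A^\x$, the quotients $A/\!\sim$ and $A^\x/\!\asymp$ carry well-defined quotient orderings and are themselves well ordered; and an order isomorphism matching up the two equivalence relations descends to an order isomorphism of the quotients. Hence $|A^\x/\!\asymp|=|A/\!\sim|$.

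I do not expect a serious obstacle: the substance is entirely contained in \prettyref{cor:x-class}(2) (which in turn rests on \prettyref{thm:smallest-mon} and \prettyref{lem:asymp-c}), already available. The only points needing a line of care are that $a\mapsto a^\x$ is a genuine \emph{bijection} of $\no^{>0}$ — so that it is an order isomorphism onto $A^\x$, not merely a weakly increasing surjection — and that $\asymp$ and $\sim$ are convex (so that passing to the quotients and transporting the isomorphism is legitimate), which is routine in an ordered field.
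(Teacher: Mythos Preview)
Your proof is correct and follows the same approach as the paper, which simply cites part~(2) of \prettyref{cor:x-class}; you have just spelled out the routine details (that $f\mapsto f^\x$ is an order isomorphism and that the convex equivalence relations descend to the quotients) that the paper leaves implicit.
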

\begin{proof}
	By part (2) of  \prettyref{cor:x-class} . 
\end{proof}

\section{Components}
Let $f$ be a Skolem function. We say that $f$ is \textbf{additively irreducible} if it cannot be written as a sum of two smaller Skolem functions; $f$ is \textbf{multiplicatively irreducible} if it cannot be written as a product of two smaller Skolem functions; Following \cite{Levitz1978} we say that $f$ is a \textbf{component} if it is both additively and multiplicatively irreducible. 
\begin{rem}\label{rem:sumprodcomponents}
	We can write every Skolem function as a finite sum of finite products of components (not necessarily in a unique way). 
\end{rem} 
\begin{prop}\label{prop:normal-form}
	Every component has the form $1$, $\x$ or $f^g$. If $f^g$ is a component, then $f$ is multiplicatively irreducible and $g$ is additively irreducible. Every component $> \x$ can be written in the form $f^g$ where $f\geq 2$, $g\geq \x$, and $g$ is a component. 
\end{prop}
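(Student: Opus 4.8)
The plan is to reduce everything to a case analysis on a Skolem expression of minimal length, supplemented by the elementary observation that every non-constant Skolem function $f$ satisfies $f\geq\x$ as a germ at $+\infty$. That observation I would prove by induction on the length of a defining expression: a sum or product of Skolem functions is a positive integer constant when both of its parts are, and otherwise is bounded below by a non-constant part, hence $\geq\x$; and $g^h$ is a constant when $g$ and $h$ both are, equals $1$ when $g=1$, and is otherwise at least $2^\x>\x$ if $h$ is non-constant, and at least $g\geq\x$ if $h$ is a positive integer constant and $g$ is non-constant. (This also follows from the description of the support of a Skolem function established above.)

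For the first assertion, I would fix a shortest defining expression for a component $c$. It is not a sum $f_1+f_2$: then $f_1,f_2\geq 1$ are Skolem functions strictly below $c$, so $c$ would be additively reducible. It is not a product $f_1f_2$ either: if $f_1,f_2\geq 2$ then $f_1,f_2<c$ and $c$ is multiplicatively reducible, whereas if one factor equals $1$ then $c$ is the other factor, which has a shorter expression. Hence the expression is $1$, $\x$, or $f^g$, and in the last case $f\geq 2$ (else $c=1$, which has the shorter expression ``$1$''). So every component is $1$, $\x$, or of the form $f^g$ with $f\geq 2$, and we keep the normalization $f\geq 2$ in what follows.

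For the second assertion, let $c=f^g$ be a component with $f\geq 2$. A nontrivial factorization $f=f_1f_2$ (with $f_1,f_2\in\Sk$, $f_1,f_2<f$, hence $\geq 2$) yields $c=f_1^gf_2^g$ with $f_1^g,f_2^g<c$, so $c$ would be multiplicatively reducible; thus $f$ is multiplicatively irreducible. A nontrivial splitting $g=g_1+g_2$ yields $c=f^{g_1}f^{g_2}$ with $f^{g_1},f^{g_2}<c$ (using $f\geq 2$), again impossible; thus $g$ is additively irreducible. For the third assertion, let $c$ be a component with $c>\x$. The minimal-expression analysis again gives a shortest expression $f^g$ with $f\geq 2$ (the cases $1,\x$ being excluded since $c>\x$); if $g=1$ there is a shorter expression, and if $g$ is a constant $n\geq 2$ then $c=f^n=f\cdot f^{n-1}$ is multiplicatively reducible, so $g$ is non-constant and therefore $g\geq\x$. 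Among the (nonempty) set of pairs $(f',g')$ with $f'\geq 2$, $g'$ non-constant, and $(f')^{g'}=c$, I would pick one with $g'$ least in the well-order of $\Sk$ and rename it $(f,g)$. Then $g$ is additively irreducible by the second assertion; and a nontrivial factorization $g=g_1g_2$ (so $g_1,g_2\geq 2$ and not both constant) would give either another admissible pair $(f^{g_1},g_2)$ with $g_2<g$ when $g_2$ is non-constant, contradicting minimality, or, when $g_2$ is a constant $n\geq 2$ (so $g_1$ is non-constant), $c=(f^{g_1})^n$ with $f^{g_1}\geq 2$, making $c$ multiplicatively reducible. Hence $g$ is multiplicatively irreducible, i.e.\ a component, and $c=f^g$ is the required representation.

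The main obstacle — really the only subtlety — is the degenerate behaviour of the constant $1$ and of positive integer constants $n\geq 2$: an exponential $f^g$ with $f\geq 2$ becomes multiplicatively reducible as soon as $f$ factors nontrivially or $g$ is a positive integer $\geq 2$, and killing these two possibilities is exactly what forces us to pass both to a minimal-length defining expression and to a $<$-minimal exponent.
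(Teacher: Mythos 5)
Your proof is correct and follows essentially the same route as the paper's: reduce a component $>\x$ to the form $f^g$ by observing that a (minimal) expression built from $+$ or $\cdot$ would exhibit additive or multiplicative reducibility, then exploit the identities $f^g = f_1^g f_2^g$, $f^{g_1+g_2}=f^{g_1}f^{g_2}$, $f^{g_1 g_2}=(f^{g_1})^{g_2}$. You make explicit the well-ordering descent on the exponent (and the handling of constant exponents) needed to conclude that $g$ can be taken to be a component, which the paper compresses into ``the rest follows at once from the following identities.''
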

\begin{proof} A Skolem functions $<\x$ is a positive integers, so it is either $1$ or additively reducible. It follows that a component is either $1$, or $\x$, or $>\x$. In the latter case it must have the form $f^g$ (because it cannot be of the form $f+g$ or $fg$).
	The rest follows at once from the following identities: 
	\begin{itemize}
		\item 	if $f=f_1 f_2$, then $f^g = f_1^g f_2^g$;
		\item  if $g=g_1 + g_2$, then $f^g = f^{g_1}f^{g_2}$;
		\item  if $g=g_1g_2$, then $f^g = (f^{g_1})^{g_2}$. 
	\end{itemize}	 
\end{proof}
\begin{cor}\label{cor:exponents}
	For every Skolem function $h$ one of the following cases holds: 
	\begin{enumerate}
		\item $h = f \cdot g$ where $f \geq \x$ and $g\geq \x$;
		\item $h = f^{g}$ where $f \geq 2$ and $g$ is a component $\geq \x$;
		\item $h = f + g$, where $f \asymp h$ and $f$ is a component;
		\item $h=1$ or $h=\x$. 
	\end{enumerate}
\end{cor}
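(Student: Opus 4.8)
The plan is to prove \prettyref{cor:exponents} as a straightforward case analysis built on \prettyref{rem:sumprodcomponents} and \prettyref{prop:normal-form}. First I would dispose of the trivial case: if $h$ is $1$ or $\x$ we are in case (4). So assume $h > \x$; in particular $h$ is not a component that equals $1$ or $\x$.

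Next I would split according to whether $h$ is additively irreducible. If $h$ is additively reducible, write $h = h_1 + h_2$ with $h_1, h_2 < h$ Skolem functions; reordering so that $h_1 \geq h_2$, the larger summand satisfies $h_1 \asymp h$ (since $h_2 \leq h_1$ forces $h_1 \leq h \leq 2h_1$). Iterating the decomposition of $h_1$ if necessary — or more cleanly, invoking \prettyref{rem:sumprodcomponents} to write $h$ as a finite sum of finite products of components and peeling off the $\asymp$-maximal such product $P$ — I can arrange $h = f + g$ where $f$ is the $\asymp$-largest component-product summand. But case (3) demands that $f$ itself be a \emph{component}, not merely a product of components. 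This is the one point that needs a small argument: among the finitely many summands-of-products, pick one product $P = c_1 \cdots c_k$ with $P \asymp h$; then some factor $c_j$ has $c_j \asymp P$ (else $P \prec h$, since each $c_i \preceq h$ and a product of strictly-dominated-by-$h$ Skolem functions each $\geq \x$ would be... — here I must be careful, as products can grow). Actually the clean route is: if $h$ is additively irreducible, then either $h$ is multiplicatively irreducible, i.e. a component, in which case by \prettyref{prop:normal-form} it has the form $f^g$ with $f \geq 2$ and $g$ a component $\geq \x$, landing in case (2); or $h$ is multiplicatively reducible, $h = f \cdot g$ with $f, g < h$, and since $h > \x$ both factors can be taken $\geq \x$ (a factor equal to $1$ would be vacuous, and a factor that is a constant $> 1$ can be absorbed — or one shows directly that $h$ being $>\x$ and multiplicatively irreducible-free forces a factorization into two pieces each $\geq \x$), landing in case (1). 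If instead $h$ is additively reducible, write $h = f + g$ with $f \asymp h$; then recursing on $f$ (which is additively irreducible after finitely many peels, or directly taking $f$ to be an $\asymp$-maximal \emph{additively irreducible} summand) gives case (3) with $f$ additively irreducible — and to upgrade "additively irreducible" to "component" one notes that if $f = f_1 f_2$ with $f_1 \asymp f$, one may replace $f$ by $f_1$; repeating, one reaches a component $\asymp h$.

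So the skeleton is: (i) handle $h \in \{1, \x\}$; (ii) for $h > \x$, if $h$ is additively reducible extract a component $f \asymp h$ and write $h = f + (h - f)$, giving (3) — here I would argue that the $\asymp$-class of $h$ contains a component by repeatedly peeling off the dominant additively- or multiplicatively-irreducible factor/summand, using that $\Sk$ is well ordered so the process terminates; (iii) if $h > \x$ is additively irreducible, then it is a component, so \prettyref{prop:normal-form} gives $h = f^g$ with $f \geq 2$ and $g$ a component $\geq \x$, giving (2) — unless $h$ is multiplicatively reducible, in which case $h$ is a product of two Skolem functions and, using $h > \x$, I arrange both to be $\geq \x$, giving (1).

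The main obstacle is the bookkeeping in step (ii): turning a decomposition of $h$ into "sum of products of components" (\prettyref{rem:sumprodcomponents}) into the specific shape $h = f + g$ with $f$ a \emph{single component} asymptotic to $h$. The subtlety is that the dominant summand is a priori a product $c_1 \cdots c_k$, and one needs that some single factor $c_j$ already dominates, i.e. $c_j \asymp c_1\cdots c_k$; this can fail if several factors are infinite and genuinely contribute to the archimedean class (e.g. $\x \cdot \x = \x^2$). The correct fix is not to look for a dominant \emph{factor} but a dominant \emph{additively irreducible} summand-or-product and then observe that case (1) already covers "product of two things $\geq \x$", so one only needs a component in the additively-reducible case when the dominant piece is additively irreducible but possibly multiplicatively reducible — and then case (1) does not apply to $h$ itself (which is additively reducible), so one must instead keep $h = f+g$ and ensure $f$ is a component by the peeling argument: if the dominant additively irreducible summand $f$ factors as $f = f_1 f_2$ with (wlog) $f_1 \succeq f_2$, then $f_1 \asymp f \asymp h$ only if $f_2 \asymp 1$, i.e. $f_2$ is a positive integer; absorbing integer factors and iterating, using well-foundedness of $<$ on $\Sk$, terminates at a component $\asymp h$. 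I would write this termination argument carefully, as it is the real content; everything else is immediate from \prettyref{prop:normal-form}.
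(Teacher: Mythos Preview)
Your overall plan is sound, and you correctly isolate the only nontrivial point: in the additively reducible case one must extract a \emph{component} $f \asymp h$, not merely an additively irreducible summand. But your peeling argument for this step does not work. You write that if the dominant additively irreducible summand $f$ factors as $f_1 f_2$ with $f_1 \succeq f_2$, then ``$f_1 \asymp f$ only if $f_2 \asymp 1$'', and propose to absorb the integer factor and iterate. The difficulty is that when both factors are infinite, \emph{neither} is $\asymp f$, so there is nothing to peel to while remaining in the archimedean class of $h$. Concretely, $f = \x \cdot 2^\x$ is additively irreducible (it is the least Skolem function in its archimedean class: every Skolem function $\asymp \x 2^\x$ has Ressayre form $k\,\x 2^\x + (\text{lower, non-negative})$ with $k \geq 1$), yet no component is $\asymp \x 2^\x$, since the only components below $\x^\x$ are $1,\x,p^\x$ by \prettyref{prop:small-components} and none of these is $\asymp \x 2^\x$.

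This is not merely a gap in your argument; it reflects an imprecision in the statement itself. For $h = \x\cdot 2^\x + 2$ none of the four alternatives applies: case (3) fails since no component is $\asymp \x 2^\x$; case (1) fails because comparing leading monomials in any factorization $h = fg$ with $f,g \in \Sk$, $f,g \geq \x$ forces $\{f,g\} \sim \{\x,\,2^\x\}$, whence $f = \x + m$ for some $m \in \N$, and then $m \geq 1$ gives $fg \geq \x 2^\x + 2^\x > h$ while $m = 0$ gives $g = 2^\x + 2/\x \notin \Sk$; case (2) fails by an easy asymptotic check; and case (4) is clearly out. The clean repair is to weaken case (3) to ``$f$ is additively irreducible''. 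That version has the one-line proof you essentially sketch (write $h$ as a sum of additively irreducible Skolem functions and let $f$ be a maximal summand; then $f \asymp h$ and $g = h - f \in \Sk$), and it is exactly what the proof of \prettyref{thm:main} actually uses: the only role of the hypothesis on $f_n$ in Case~3 there is to ensure that the sequence $(f_n)_n$ cannot itself fall under case (3), and for that ``additively irreducible'' already suffices.
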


\section{Main theorem}
We work inside the surreal numbers $\no$ and identify $\Sk$ as a subset of $\no$, with $\x=\omega\in \no$. Our main result is the following. 

\begin{thm} \label{thm:main}  
Let $c\geq 1$ be a surreal number and let $Q\in \Sk$.  
The set of real numbers $r\in \R^{>0}$ such that there is $h\in \Sk$ satisfying $(h/Q)^c \sim r$, is well ordered and has no accumulation points in $\R$ (hence it has order type $\leq \omega$). 
\end{thm}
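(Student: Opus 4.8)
The plan is to prove \prettyref{thm:main} by induction on the complexity of $Q$ together with a simultaneous induction on $c$, exploiting the case analysis of \prettyref{cor:exponents} for $Q$ and the structural results of the previous sections. The statement asserts, for fixed $c\geq 1$ and $Q\in\Sk$, that the set
$$R(c,Q) := \{\,r\in\R^{>0} : \exists h\in\Sk,\ (h/Q)^c\sim r\,\}$$
has no accumulation point in $\R$. Equivalently, writing $h$ in the form dictated by \prettyref{cor:exponents}, we must control how the leading real coefficient of $(h/Q)^c$ can vary. The key translation is \prettyref{prop:ftoc} and \prettyref{prop:sim-c}: by \prettyref{prop:sim-c}, when $c>\N$ the condition $(h/Q)^c\sim e^s$ forces $h/Q = 1 + s/c + o(1/c)$, so the relevant real parameter is extracted from a \emph{single} monomial's coefficient, and an accumulation point in $R(c,Q)$ would produce a strictly increasing or decreasing $\omega$-indexed family of such coefficients among Skolem functions $h$ with a common dominant behaviour relative to $Q$. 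Thus the task reduces to a counting statement: within any fixed archimedean class (relative to $c$, i.e.\ modulo $\asymp_c$) there are at most $\omega$-many $\sim_c$-classes of Skolem functions, and moreover the corresponding reals form a discrete set.

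First I would dispose of the base cases $Q=1$ and $Q=\x$ and the case $c\preceq 1$ (reducible to $c=1$ by \prettyref{prop:mono}), and handle $c$ a finite integer separately since then $(h/Q)^c\sim r$ just means $h/Q\sim r^{1/c}$, reducing to the $c=1$ analysis. For the inductive step I would split $h$ according to \prettyref{cor:exponents}: if $h=fg$ with $f,g\geq\x$, use multiplicativity of $\asymp_c$ and $\sim_c$ (the last proposition before \S10) to reduce to the factors, invoking the induction hypothesis on the smaller functions $f/Q_1$, $g/Q_2$ for suitable factorizations of $Q$; if $h=f^g$ with $f\geq 2$ and $g$ a component $\geq\x$, pass to logarithms — here the asymptotic expansion machinery of \S7 and the containment $\Sk\subseteq\K^\uparrow+\N$ (\prettyref{thm:K}, \prettyref{cor:support}) guarantee that $\log h = g\log f$ lands in $\K$ and has support bounded below by $\x$ — and apply the induction hypothesis with the modified exponent $cg$ in place of $c$; if $h=f+g$ with $f$ a component $\asymp h$, the perturbation $g\prec f$ contributes to lower-order terms and one tracks finitely many reals via \prettyref{thm:finite-sums} / \prettyref{cor:bounds-on-sums}. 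Throughout, \prettyref{cor:A-to-x} converts $\asymp$-counting of $A^\x$ into $\sim$-counting of $A$, which is what lets the exponential step feed back into the induction.

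The main obstacle, I expect, is the exponential case $h=f^g$: taking logarithms replaces the problem for $c$ with a problem for $cg$, and since $g$ can be an arbitrarily large Skolem function this is not obviously a \emph{decrease} in any naive well-founded measure — this is precisely why the theorem is stated for general $c\geq 1$ rather than just $c=1$, and the induction must be organized so that the pair (complexity of $h$, something about $c$) decreases, with $c$ allowed to grow only when $h$ strictly simplifies. Making this well-foundedness precise — presumably by inducting on the Skolem term for $h$ (or for $Q$) with $c$ as a parameter that is reset at each exponential descent — is the delicate bookkeeping at the heart of the argument. A secondary difficulty is ensuring that ``no accumulation point'' is preserved under the arithmetic operations on $R(c,\cdot)$: a sum or product of two discrete subsets of $\R$ need not be discrete in general, so one must use that the relevant sets are not merely discrete but well ordered \emph{and} reverse-well-ordered (order type $\leq\omega$ from below and, after reflecting, controlled from above), which is exactly the strength packaged into the ``no accumulation points'' formulation; the finite-sums bound \prettyref{cor:bounds-on-sums} is the tool that keeps the order types below $\omega$ at each stage.
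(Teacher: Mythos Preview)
Your proposal captures some correct structural moves --- decomposing $h$ via \prettyref{cor:exponents} and recognizing that the exponent $c$ must be allowed to grow in the exponential case --- but the induction scheme is not the one that works, and there is a genuine gap in the additive case.

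The paper does not induct on term complexity of $Q$ or $h$, nor on $c$. It argues by contradiction, taking $Q$ \emph{minimal in the well-order of $\Sk$} for which a counterexample sequence $(h_n)_n$ exists (for some $c\geq 1$). This dissolves your well-foundedness worry in the exponential case: when $h_n = f_n^{g_n}$, one first shows $(g_n)_n$ is eventually constant (since $g_n - g_0$ is bounded in $\N$ and Skolem functions are omnific integers, \prettyref{prop:omnific}), after which the problem becomes $(f_n/f_0)^{g_0 c}\sim r'_n$ with $f_0 \prec Q$, hence $f_0 < Q$ in $\Sk$. Minimality of $Q$ then applies directly, no matter how large the new exponent $g_0 c$ is. Your proposed double induction on complexity and $c$ is neither needed nor obviously well-founded.

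The serious gap is the additive case $h_n = f_n + g_n$. You assume $g\prec f$, but \prettyref{cor:exponents}(3) only gives $f_n\asymp h_n$; one may have $g_n \asymp Q$ as well. Worse, even when $g_n\prec Q$, you are left with a sequence $(f_n)_n$ of \emph{components} all $\asymp Q$: you cannot shrink $Q$, and your plan offers no smaller object to recurse on. The paper's missing ingredient is a second minimization: among counterexample sequences for the minimal $Q$, choose one of minimal \emph{characteristic bound} $N$ (the least $N$ with all $h_n \leq N h$, where $h$ is the least Skolem function $\asymp Q$). Cases 1 and 2 then show a minimal-$N$ counterexample cannot consist of components; in Case 3 with $g_n\asymp Q$, both $(f_n)_n$ and $(g_n)_n$ have characteristic bound $\leq N-1$, contradicting minimality; with $g_n\prec Q$, the resulting component sequence $(f_n)_n$ is already excluded. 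Without this device your argument stalls. Finally, \prettyref{thm:finite-sums} and \prettyref{cor:bounds-on-sums} play no role in this proof --- they are used only later for order-type bounds --- and the worry about sums of discrete sets is a red herring: the paper works with sequences and shows each factor is eventually constant.
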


\begin{proof}
Given $Q$ and $c$, the set of reals $r\in \R^{>0}$ such that there is $h\in \Sk$ with $(h/Q)^c \sim r$ is in order preserving bijection with the set of Skolem functions $\asymp_c Q$, so it is well ordered (as $\Sk$ is well ordered). A well ordered subset of $\R^{>0}$ has an accumulation point if and only if it contains a strictly increasing 
and bounded sequence. Assuming for the sake of a contradiction that the theorem fails, let $Q$ be minimal in the well order of~$\Sk$ such that
there exist a surreal number $c\geq 1$, a strictly increasing and bounded sequence of positive real
numbers~$(r_n)_{n\in\N}$, and a
sequence~$(h_n)_{n\in\N}$ of Skolem functions, such that
$$(h_n/Q)^c \sim r_n.$$ 
By the assumptions, for all $n\in \N$, we have 
\begin{equation*}
h_n \asymp_c Q
\end{equation*}
which in turn implies
$h_n \asymp Q$
(by~\prettyref{prop:mono}).
In other words, all the functions $h_n$ belong to the
archimedean class of~$Q$. Let us also notice that, given $c\geq 1$ as above, the minimality property of $Q$ implies that $Q$ is minimal in its $\asymp_c$-class in $\Sk$ (using the fact that if $Q'\asymp_c Q$, there is $s\in \R^{>0}$ with $(h_n/Q')^c \sim sr_n$ for all $n\in \N$).  

Now let $h$ be the least Skolem function $\asymp Q$,
and note that its multiples $nh$ ($n\in \N$) are cofinal in the
archimedean class of $Q$. There is $N\in \N$ such that $h_n \leq Nh$ for all $n\in \N$, for otherwise the sequence $(r_n)_{n\in \N}$ is unbounded.
We call the least such $N$ the {\bf characteristic bound} of the sequence
$(h_n)_{n\in \N}$. 

We now choose $(h_n)_n$ with the additional property that $(h_n)_n$ has minimal characteristic bound $N\in \N$. Note that the characteristic bound $N$ is only defined for those sequences $(h_n)_n$ such that there is $c\geq 1$ and a strictly increasing bounded sequence $r_n\sim (h_n/Q)^c$ as above, but it does not depend on the choice of $c$, so we can minimize $N$ before choosing $c$. Finally we fix the exponent $c\geq 1$ and we get a strictly increasing bounded sequence $(r_n)_n$ of positive real numbers such that $(h_n/Q)^c \sim r_n$. 
	
Along the sequence $(h_n)_n$ there is one of the cases of \prettyref{cor:exponents} which holds infinitely often. By taking a subsequence we can thus assume to be in one of the following cases:
\begin{enumerate}
\item for all $n\in\N$, $h_n = f_n \cdot g_n$ where $f_n \geq \x$ and $g_n\geq \x$;
\item for all $n\in\N$, $h_n = f_n^{g_n}$ where $f_n \geq 2$ and $g_n \geq \x$;
\item for all $n\in\N$, $h_n = f_n + g_n$, where $f_n \asymp Q$ and $f_n$ is a component;
\end{enumerate}
with $(f_n)_n$ and $(g_n)_n$ weakly increasing
(taking advantage of the fact that $\Sk$ is well ordered).

We will need the following observation. Define $r\in \R^{>0}$ by $(Q/h_0)^c\sim r$  and let $r'_n = r_nr$. Notice that $$(h_n/h_0)^c \sim
r'_n$$ for all $n\in\N$ and observe that
$(r'_n)_n$ is again increasing and bounded.
\smallskip

\textbf{Case 1}. Suppose $h_n = f_n \cdot g_n$ where $f_n\geq \x$ and $g_n\geq \x$ for all $n\in \N$. By our
assumptions $r'_n \sim (h_n/h_0)^c = (f_n/f_0)^c (g_n/g_0)^c $. Both
factors in the last expression are  $\geq 1$ because the sequences
$(f_n)_n$ and $(g_n)_n$ are weakly increasing. It then follows that there
are real numbers $s_n\geq 1$ and $t_n\geq 1$ such that 
\begin{equation*}
(f_n/f_0)^c \sim
s_n, \quad \quad \quad (g_n /g_0)^c \sim t_n
\end{equation*}
 and $r'_n = s_n t_n$. Since $(r'_n)_n$ is
bounded, the sequences $(s_n)_{n\in \N}$ and $(t_n)_{n\in \N}$ must also be
bounded. Since both $f_n$ and $g_n$ are $\geq \x$ and their product is
$h_n$, they are both $\prec h_n \asymp Q$. In particular $f_0$ and $g_0$ are $\prec Q$. By the minimality of~$Q$, the
sequences $(s_n)_n$ and $(t_n)_n$ are eventually constant, hence
$(r'_n)_n$ is eventually constant, a contradiction.

\smallskip
	In the next case we use the full strength of the fact that we work with all the equivalence relations $\sim_c$ and not only with $\sim$. 
\smallskip	
	
	\textbf{Case 2.} Suppose $h_n=f_n^{g_n}$ where $f_n\geq 2$ and $g_n\geq \x$ for all $n\in \N$. Note that $r'_n \sim (h_n/h_0)^c \geq h_n/h_0 = f_n^{g_n}/f_0^{g_0} \geq f_0^{g_n-g_0} \geq 2^{g_n-g_0}$ for $n\in \N$. Since $(r'_n)_n$ is bounded in $\R$,  there is $M \in \N$ such that $g_n-g_0 < M$ for all $n\in \N$. If the difference between two Skolem functions is bounded by a natural number, then it is equal to a natural number (\prettyref{prop:omnific}). Since $(g_n)_{n\in \N}$ is weakly increasing, there must be some $k\in \N$ such that $g_n = g_k$ for all $n\ge k$. For $n\geq k$ we have $(h_n/h_k)^c \sim s r'_n$ where $s\in \R^{>0}$ is defined by $s \sim  (h_0/h_k)^c$. 
		 Taking a subsequence we can assume $k=0$. Thus $s=1$ and
		$$r'_n \sim (h_n/h_0)^c = (f_n/f_0)^{g_0 c}$$
		for all $n\in \N$. Since $f_n\geq 2$ and $g_n\geq \x$, we have $f_n \prec f_n^{g_n} = h_n \asymp Q$ for all $n\in \N$. Since $(f_n/f_0)^{g_0 c} \sim r'_n$ and $f_0 \prec Q$, by the minimality of $Q$ we deduce that $(r'_n)_{n\in \N}$ is eventually constant, a contradiction.

		\smallskip
		 We have shown that a sequence $(h_n)_n$ with minimal characteristic bound falls necessarily under case 3, so it cannot consist entirely of components. It remains to deal with case 3. 
		
		\smallskip
	\textbf{Case 3.} Suppose that $h_n = f_n + g_n$  where $f_n \asymp Q$ and $f_n$ is a component for all $n\in \N$.  
It suffices to consider the cases $c=1$ and~$c>\N$, for if $c\asymp c'$ and $(h_n/Q)^c \sim r_n$, then $(h_n/Q)^{c'} \sim r_n^t$, where $t\in \R^{>0}$ is such that $t \sim c'/c$. 
	Taking a subsequence we can further assume that either $g_n \asymp Q$ for all $n\in \N$, or $g_n \prec Q$ for all $n\in \N$.  
	
%
%
\smallskip

		\textbf{Case $c=1$}. The assumption $(h_n/Q)^c \sim r_n$ becomes $h_n/Q \sim r_n$. Recall that $h_n = f_n+g_n$. Consider first the subcase with $g_n \asymp Q$ for all $n\in \N$. Then all the functions $h_n,f_n,g_n$ are in the archimedean class of $Q$, so there are positive real numbers $a_n\in \R^{>0}$ and $b_n\in \R^{>0}$ such that 
		$$a_n \sim f_n/Q \quad  \text{ and } \quad b_n \sim g_n/Q$$
		 for all $n\in \N$. It follows that $a_n+b_n = r_n$ for
all $n\in \N$. Since $(r_n)_n$ is bounded, it follows that $(a_n)_n$ and
$(b_n)_n$ are also bounded. Recall that $Q$ is minimal in its
\hbox {$\asymp_c$-class}. Since $c=1$, this means that $Q$ is minimal in its archimedean class, so the functions $h_n,f_n,g_n$ are all $\geq Q$. If $N$ is the characteristic bound of $(h_n)_n$, we have $f_n \geq Q$ and $g_n \geq Q$ and $f_n+g_n = h_n \leq NQ$, so both $(h_n)_n$ and $(g_n)_n$ have characteristic bound $\leq N-1$. By the minimality of $N$, we deduce that the sequences $(a_n)_n$ and $(b_n)_n$ are eventually constant, hence their sum $(r_n)_n$ is also eventually constant, a contradiction. 
		 
		 \smallskip
		 Now consider the subcase with $g_n \prec Q$ for all $n\in \N$. Then the functions $h_n$ and $f_n$ are in the archimedean class of $Q$, but $g_n$ is in a lower archimedean class. It follows that 
		$$r_n \sim h_n/Q = (f_n+g_n)/Q \sim f_n/Q$$
		 for all $n\in \N$. 
		The sequence $(f_n)_n$ is then a counterexample to the theorem with the same characteristic bound than  $(h_n)_n$ but consisting entirely of components. We have already shown that this cannot happen, so we have a contradiction.  
		 \smallskip
		
		\textbf{Case $c>\N$.} 
		We are still inside the case $h_n = f_n+g_n$ with $h_n$ a component.  By \prettyref{prop:sim-c} the condition $(h_n/h_0)^c \sim r'_n$ can be rewritten in the form 
		\begin{equation}\label{f+g}
		h_n/h_0 - 1 = s_n/c + o(1/c)
		\end{equation}
		where $s_n = \log(r'_n)$ for $n\in \N$. Note that since $(h_n)_n$ is increasing, we have $r'_n \geq 1$, so $\log(r'_n)$ is well defined and $\geq 0$. Moreover $(s_n)_n$ is strictly increasing. Using $h_n = f_n+g_n$, Equation (\ref{f+g}) becomes
		$$(f_n-f_0) + (g_n-g_0) = s_n(f_0+g_0)(1/c +o(1/c)).$$ 
		Dividing by $f_0$ and multiplying by $c$, it can be rewritten as 
		\begin{equation*}
		c\left(\frac{f_n}{f_0}-1\right) + 
		\left(\frac{c}{f_0} \right) 
		\left( g_n - g_0\right) 
		=
		s_n \left( 1 + \frac{g_0}{f_0} \right) + o(1)
		\end{equation*}
		Since $g_0 \preceq Q \asymp f_0$ the right-hand-side is
finite. The two summands on the left are $\geq 0$ and their sum is finite,
so they are both finite, i.e.\ they can be written as a real number plus an infinitesimal. This means that we can define $a_n\in \R$ and $b_n\in \R$ by the equations
		\begin{equation*}
		a_n = c\left(\frac{f_n}{f_0}-1\right) + o(1) \quad \text{and} \quad b_n = \left(\frac{c}{f_0} \right) 
		\left( g_n - g_0\right)  + o(1).
		\end{equation*}
		We can then write
		\begin{equation}\label{sn}
		a_n + b_n = s_n \left( 1 + \frac{g_0}{f_0}\right) + o(1).
		\end{equation}
		Since $(f_n)_n$ and $(g_n)_n$ are weakly increasing, the sequences of real numbers $(a_n)_n$ and $(b_n)_n$ are weakly increasing. Moreover, since $(s_n)_{n\in \N}$ is bounded and $g_0/f_0$ does not depend on $n$, $(a_n)_{n\in \N}$ and $(b_n)_{n\in \N}$ are also bounded. 
		
		By \prettyref{prop:sim-c} (and the assumption $c>\N$) the definition of $a_n$ can be rewritten in the form 
		$$(f_n/f_0)^c \sim e^{a_n}.$$
		We claim that $(a_n)_n$ is eventually constant. If $f_0<Q$ this follows from the minimality property of $Q$, so we can assume $Q\leq f_0$. We also have $f_0 \leq h_0 \leq h_n \asymp_c Q$, so all the functions $f_n$ are in the $\asymp_c$-class of $Q$ and therefore there is a real number $s\in \R^{>0}$ such that 
		$$(f_n/Q)^c \sim se^{a_n}$$
		for all $n\in \N$. Assuming for a contradiction that $(a_n)_n$ is not eventually constant, $(f_n)_n$ would be a counterexample to the theorem with a characteristic bound at most equal to that of $(h_n)_n$ (because $f_n \leq h_n$). However $(f_n)_n$ has the additional property that it consists entirely of components and we have already shown that this cannot happen. This contradiction shows that $(a_n)_n$ is indeed eventually constant. 
		
		\medskip
		We now claim that $(b_n)_{n\in \N}$ is eventually constant.   By our definitions we have $b_n = (c/f_0)(g_n - g_0)+o(1)$ so we can write 
		\begin{equation}\label{eq:bn}
		g_n - g_0 = b_n P + o(P)
		\end{equation}
	where $P= f_0/c$. 

	We distinguish three subcases. 
		\smallskip
		
		Subcase 1. If $g_0 \prec P$, then for all $n$ we have $g_n = b_n P+o(1)$, or equivalently $g_n/P = b_n+o(1)$. Since $P=f_0/c \prec f_0 \asymp Q$, by the minimality of $Q$ we conclude that $(b_n)_{n\in \N}$ is eventually constant (possibly $0$), as desired. 
		
		Subcase 2. If $g_0 \asymp P$, then there is $r\in \R^{>0}$ such that $g_0 \sim r P$, so $g_n = (b_n+r) P+o(1)$ for all $n\in \N$. Reasoning as above,  $(b_n+r)_{n\in \N}$ is eventually constant, hence so is $(b_n)_{n\in \N}$. 
		
Subcase 3.
If $g_0 \succ P$, we divide Equation (\ref{eq:bn}) by $g_0$ obtaining
$(g_n/g_0 - 1) = b_n (P/g_0) + o(P/g_0)$.
Now we multiply by $c'= g_0/P$ to get $c' (g_n/g_0-1) = b_n + o(1)$. 
Since $c' >\N$, by \prettyref{prop:sim-c} we obtain $\left(\frac{g_n}{g_0}\right)^{c'}\sim e^{b_n}$.
If $g_0<Q$, then by the minimality of $Q$ we conclude that $(b_n)_{n\in \N}$ is eventually constant, as desired.
In the opposite case we have $g_0 \asymp_c Q$ (since $Q\le g_0 \le h_0
\asymp_c Q$).
The new exponent $c'$ is in the same archimedean class of~$c$, because
$c'=(g_0/f_0)c$ and $g_0\asymp f_0$, thus $c/c' = \gamma + o(1)$ for some
real~$\gamma>0$. Raising to the power $\gamma$ both sides of the relation  $\left(\frac{g_n}{g_0}\right)^{c'}\sim e^{b_n}$ we then obtain $\left(\frac{g_n}{g_0}\right)^{c} \sim e^{b_n\gamma}$.
Since $g_0 \asymp_c Q$, there is a real~$r>0$ such that
$\left(\frac{g_n}{Q}\right)^{c} \sim r e^{b_n\gamma}$.
All the functions $h_n$, $f_n$, $g_n$ are in the same archimedean class,
namely that of~$Q$. Since $h_n = f_n + g_n$, it follows that the characteristic bound
of~$(g_n)_n$ is lower than the characteristic bound~$N$ of~$(h_n)_n$. By
the minimality of $N$, we deduce that $(re^{b_n\gamma})_n$ is eventually
constant, hence also $(b_n)_n$ is eventually constant, as desired. 
		
		From Equation (\ref{sn}) we can now conclude that $(s_n)_{n\in \N}$ is also eventually constant, against the assumptions. This contradiction concludes the proof. 
	\end{proof}
\begin{cor}\label{cor:classes}
	Let $1\leq c \in \no$. The set of $\sim_c$-classes of Skolem functions within any class modulo $\asymp_c$ has order type $\leq \omega$. In particular, the set of asymptotic classes of Skolem functions within any archimedean class has order type $\leq \omega$. 
\end{cor}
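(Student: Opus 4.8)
The plan is to deduce the corollary directly from \prettyref{thm:main}, by identifying, for a fixed $\asymp_c$-class, its set of $\sim_c$-subclasses with the set of reals appearing in that theorem.

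First I would fix $1\le c\in\no$ and a $\asymp_c$-equivalence class $\mathcal C$ of Skolem functions, and choose any $Q\in\mathcal C$ (so $Q\in\Sk$). For $h\in\mathcal C$ we have $h^c\asymp Q^c$ by definition of $\asymp_c$, so $(h/Q)^c\asymp 1$. Using the decomposition $\no=\no^\uparrow+\R+o(1)$, a positive surreal $x$ with $x\asymp 1$ has $x^\uparrow=0$ and $x^\circ\in\R^{>0}$, and then $x-x^\circ=x^\downarrow\prec 1\asymp x^\circ$, so $x\sim x^\circ$; applied to $x=(h/Q)^c$ this yields a unique real $r(h)>0$ with $(h/Q)^c\sim r(h)$. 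Conversely, if $h\in\Sk$ satisfies $(h/Q)^c\sim r$ for some $r\in\R^{>0}$, then $(h/Q)^c\asymp 1$, hence $h\asymp_c Q$, i.e.\ $h\in\mathcal C$. So the image of $h\mapsto r(h)$ is exactly the set
$$R=\{\,r\in\R^{>0}\;:\;\exists h\in\Sk,\ (h/Q)^c\sim r\,\}$$
from \prettyref{thm:main}.

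Next I would check that $h\mapsto r(h)$ induces an order isomorphism $\mathcal C/\!\sim_c\;\to\;R$. It is constant on $\sim_c$-classes and separates distinct ones: $h\sim_c h'$ means $h^c\sim h'^c$, and dividing by $Q^c$ gives $(h/Q)^c\sim(h'/Q)^c$, so $r(h)=r(h')$; conversely $r(h)=r(h')$ gives $(h/Q)^c\sim(h'/Q)^c$, and multiplying by $Q^c$ gives $h^c\sim h'^c$, i.e.\ $h\sim_c h'$. It is order preserving since $x\mapsto x^c=e^{c\log x}$ is strictly increasing on $\no^{>0}$, so $h\le h'$ gives $(h/Q)^c\le(h'/Q)^c$ and $r(h)\le r(h')$, with equality exactly when $h\sim_c h'$. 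By \prettyref{thm:main}, $R$ is well ordered with no accumulation point in $\R$, hence of order type $\le\omega$; therefore $|\mathcal C/\!\sim_c|\le\omega$, which is the first assertion. (This is essentially the order-preserving bijection already invoked at the start of the proof of \prettyref{thm:main}.)

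Finally, taking $c=1$ makes $\sim_c$ coincide with $\sim$ and $\asymp_c$ with $\asymp$, so the set of asymptotic classes of Skolem functions inside any archimedean class has order type $\le\omega$, giving the ``in particular'' clause. I do not expect any real obstacle: all the content is in \prettyref{thm:main}, and what remains is just unwinding \prettyref{defn:eqrel} together with $\no=\no^\uparrow+\R+o(1)$; the only steps worth a sentence are that $x>0,\ x\asymp 1$ forces $x\sim x^\circ\in\R^{>0}$ and that raising to the power $c$ is strictly monotone on positive surreals.
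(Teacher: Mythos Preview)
Your proposal is correct and follows exactly the approach of the paper's proof: fix a representative $Q$ of the $\asymp_c$-class, associate to each $h\asymp_c Q$ the real $r$ with $(h/Q)^c\sim r$, observe this sets up an order-preserving bijection between $\sim_c$-classes and the set $R$ of \prettyref{thm:main}, and conclude. The paper's proof compresses all of this into one sentence (``the $\sim_c$-class of $h$ is determined by the real number $r\in\R^{>0}$ defined by $r\sim(h/Q)^c$''), while you have spelled out the justification via $\no=\no^\uparrow+\R+o(1)$ and the monotonicity of $x\mapsto x^c$; but there is no substantive difference in strategy.
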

\begin{proof}
	Fix $Q\in \Sk$. For every $h\asymp_c Q$, the $\sim_c$-class of $h$ is determined by the real number $r\in \R^{>0}$ defined by $r\sim (h/Q)^c$, so we can apply \prettyref{thm:main}.  
\end{proof}
We need the following corollary to obtain an upper bound on the order type of the set of Skolem functions $<2^{\x^\x}$.

\begin{cor}\label{cor:classes-within} For any $A\subseteq \Sk$, $|A^\x /\! \asymp |   \;= \;   |A/\! \sim \! | \;\leq\;\omega |A/\! \asymp \!| $. 
\end{cor}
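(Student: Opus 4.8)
The plan is to combine the two ingredients that have already been assembled: the identity $|A^\x/\!\asymp| = |A/\!\sim|$ from \prettyref{cor:A-to-x}, and the bound on the number of $\sim$-classes inside each $\asymp$-class coming from \prettyref{cor:classes} (the case $c=1$ of \prettyref{thm:main}). So the only real content to prove is the inequality $|A/\!\sim| \leq \omega\,|A/\!\asymp|$. The left-hand side is the order type of the set of $\sim$-classes of elements of $A$, and the right-hand side counts the $\asymp$-classes of $A$ with a factor $\omega$ thrown in for the $\sim$-classes sitting inside each one.

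First I would set $\beta = |A/\!\asymp|$ and fix an increasing enumeration $(C_\xi)_{\xi<\beta}$ of the archimedean classes of $A$ (i.e. the classes $C_\xi = \{a\in A : a\asymp b_\xi\}$ for representatives $b_\xi$). The set $A/\!\sim$ is partitioned, as an ordered set, into the blocks $C_\xi/\!\sim$, and these blocks occur in the same order as the $\asymp$-classes, since $a\prec a'$ (for $a,a'$ in different archimedean classes) forces $a<a'$ eventually and hence the $\sim$-class of $a$ precedes that of $a'$. Thus $A/\!\sim$ is the ordered sum $\sum_{\xi<\beta}(C_\xi/\!\sim)$ of the blocks. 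By \prettyref{cor:classes} applied with $c=1$ (to the ambient set $\Sk$, restricting attention to those Skolem functions lying in $A$), each block $C_\xi/\!\sim$ has order type $\leq\omega$. An ordered sum of $\beta$ many sets each of order type $\leq\omega$ has order type $\leq \omega\cdot\beta = \omega\,|A/\!\asymp|$, by a routine induction on $\beta$ (the successor and limit steps being immediate from the definition of ordinal multiplication and of $\sum_{i<\lambda}$). This gives $|A/\!\sim| \leq \omega\,|A/\!\asymp|$, and combining with \prettyref{cor:A-to-x} yields the chain $|A^\x/\!\asymp| = |A/\!\sim| \leq \omega\,|A/\!\asymp|$.

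The one point requiring a little care is the claim that $A/\!\sim$ really is the concatenation of the blocks $C_\xi/\!\sim$ in the order of the $\xi$'s — i.e. that $\sim$ refines $\asymp$ on $A$ (so the blocks are well defined) and that $\sim$-classes from distinct archimedean classes never interleave. The first is clear since $f\sim g$ implies $f\asymp g$; the second is the observation above that cross-class comparisons are governed entirely by the $\asymp$-order. I do not expect a genuine obstacle here: \prettyref{thm:main} does all the heavy lifting, and what remains is the elementary ordinal arithmetic of a well ordered sum of short blocks. The only thing to be slightly careful about is that $\omega\cdot\beta$ (ordinal product, in the order the factors are written) is exactly the order type of $\beta$ copies of $\omega$ laid end to end, which is the convention in force in the paper's Section on ordinal arithmetic.

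\begin{proof}
	By \prettyref{cor:A-to-x} we have $|A^\x/\!\asymp| = |A/\!\sim|$, so it suffices to prove $|A/\!\sim| \leq \omega\,|A/\!\asymp|$. Since $f\sim g$ implies $f\asymp g$, the relation $\sim$ refines $\asymp$ on $A$; let $(C_\xi)_{\xi<\beta}$ enumerate the archimedean classes of $A$ in increasing order, where $\beta = |A/\!\asymp|$. If $a\in C_\xi$ and $a'\in C_{\xi'}$ with $\xi<\xi'$, then $a\prec a'$, hence $a<a'$, and in particular $a\not\sim a'$ and the $\sim$-class of $a$ precedes that of $a'$ in $A/\!\sim$. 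Therefore, as ordered sets, $A/\!\sim$ is the ordered sum $\sum_{\xi<\beta}\left(C_\xi/\!\sim\right)$ of the blocks $C_\xi/\!\sim$. By \prettyref{cor:classes} with $c=1$, the set of $\sim$-classes of Skolem functions within the single archimedean class $C_\xi$ has order type $\leq\omega$, so $|C_\xi/\!\sim| \leq \omega$ for every $\xi<\beta$. A straightforward induction on $\beta$ then gives that $\sum_{\xi<\beta}|C_\xi/\!\sim| \leq \sum_{\xi<\beta}\omega = \omega\beta$: the case $\beta=0$ is trivial, the successor case follows from $\omega(\xi+1) = \omega\xi + \omega$, and the limit case from the definition of $\sum_{\xi<\lambda}$ as a supremum. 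Hence $|A/\!\sim| \leq \omega\beta = \omega\,|A/\!\asymp|$, and combining with the first equality we obtain $|A^\x/\!\asymp| = |A/\!\sim| \leq \omega\,|A/\!\asymp|$.
\end{proof}
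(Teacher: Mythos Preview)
Your proof is correct and follows exactly the same approach as the paper: the equality comes from \prettyref{cor:A-to-x} and the inequality from \prettyref{cor:classes}. The paper's proof is a two-line citation of these results, whereas you have spelled out in detail why the ordered sum of the $\asymp$-blocks yields the bound $\omega\,|A/\!\asymp|$; this is precisely the implicit reasoning behind the paper's terse ``follows from \prettyref{cor:classes}''.
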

\begin{proof}
The first equality is \prettyref{cor:A-to-x}. The inequality $ |A/\! \sim \!| \;\leq\;\omega |A/\! \asymp \!| $ follows from \prettyref{cor:classes}. 
\end{proof}

We give below other consequences of the main theorem. 
\begin{cor}\label{cor:discreteness}
	Let $Q= \sum_{i<\alpha} \m_i r_i \in \no$ and let $\m$ a monomial smaller than all monomials $\m_i$ in the support of $Q$. Then there is a well ordered subset $D\subseteq \R$ without accumulation points such that for every Skolem function $f$, if $f$ (seen as an element of $\no$) has a truncation of the form $Q+r\m$, then $r\in D$. 
\end{cor}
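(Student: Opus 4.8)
\textit{Approach.} The surreal $Q$ is not a Skolem function, so \prettyref{thm:main} does not apply to it directly; the plan is to pass to a \emph{reference Skolem function}. If no Skolem function has a truncation of the form $Q+r\m$ we may take $D=\emptyset$, so assume one does and fix such an $f_0$, say with truncation $Q+r_0\m$. Write $\rho_0\m_0$ for the leading term of $f_0$, so $\rho_0\in\R^{>0}$; when $Q\neq 0$ the monomial $\m_0$ is the leading monomial of $Q$ and $\m_0\succ\m$ (two monomials with $\m<\m_0$ cannot be $\asymp$), while when $Q=0$ one has $\m_0=\m$ and $\rho_0=r_0$. The coefficient $r$ we wish to control sits at the scale $\m$, far below $\m_0$; to make the crude relations $\sim_c,\asymp_c$ of \prettyref{thm:main} sensitive to it, we apply the theorem not with $c=1$ but with the magnifying exponent $c:=\m_0/\m$, which equals $1$ in the degenerate case $Q=0$ and is an infinite monomial (in particular $c>\N$) otherwise.

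\textit{The computation.} Let $f\in\Sk$ have a truncation $Q+r\m$, so $f=Q+r\m+\eps$ with every monomial of $\eps$ strictly below $\m$, hence $\eps\prec\m$. Then $f-f_0=(r-r_0)\m+o(\m)$ and $1/f_0=\rho_0^{-1}\m_0^{-1}+o(\m_0^{-1})$, so multiplying out gives
$$\frac{f}{f_0}\;=\;1+\frac{r-r_0}{\rho_0}\cdot\frac{\m}{\m_0}+o\!\left(\frac{\m}{\m_0}\right)\;=\;1+\frac{s}{c}+o\!\left(\frac1c\right),\qquad s:=\frac{r-r_0}{\rho_0}\in\R .$$
If $c>\N$ (case $Q\neq0$), \prettyref{prop:sim-c} converts this into $(f/f_0)^c\sim e^{s}$; if $c=1$ (case $Q=0$), one simply has $f/f_0\sim 1+s=r/r_0$, a positive real. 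In both cases there is a fixed strictly increasing \emph{continuous} injection $\psi\colon\R\to\R$ (namely $\psi(x)=e^{(x-r_0)/\rho_0}$, respectively $\psi(x)=x/r_0$), depending only on $f_0$, such that $(f/f_0)^c\sim\psi(r)$ for every such $f$, and $\psi(r)\in\R^{>0}$ in all the relevant cases.

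\textit{Conclusion and main obstacle.} Apply \prettyref{thm:main} with the Skolem function $f_0$ in the role of its ``$Q$'' and with the exponent $c$: the set $E:=\{\,t\in\R^{>0}\suchthat\exists h\in\Sk,\ (h/f_0)^c\sim t\,\}$ is well ordered and has no accumulation points in $\R$. By the previous paragraph the set $D$ of all reals $r$ for which some Skolem function has a truncation $Q+r\m$ satisfies $\psi(D)\subseteq E$ (up to the harmless point $r=0$ when $Q=0$, which is then the least element of $D$), so $\psi$ restricts to an order isomorphism of $D$ onto a subset of the well ordered set $E$; hence $D$ is well ordered. Moreover an accumulation point of $D$ would, by continuity of $\psi$, produce an accumulation point of $E$, which is impossible, so $D$ has no accumulation points. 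Thus $D$ is the required set. The only genuine work is the displayed asymptotic expansion together with the minor bookkeeping for the degenerate case $Q=0$; the single conceptual point — and the thing that is easy to miss — is that one must invoke \prettyref{thm:main} with the exponent $c=\m_0/\m$ rather than $c=1$, since it is exactly this magnification that lets an $\asymp_c$-class detect the coefficient $r$.
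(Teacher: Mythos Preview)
Your proof is correct and follows essentially the same strategy as the paper's: reduce to \prettyref{thm:main} via \prettyref{prop:sim-c} by choosing an exponent $c$ that magnifies the scale~$\m$. The paper takes $c=Q/\m$ and computes $(f/Q)^c\sim e^{r}$ directly, then invokes \prettyref{thm:main}; you instead pass explicitly to a reference Skolem function~$f_0$ and take $c=\m_0/\m$, obtaining $(f/f_0)^c\sim e^{(r-r_0)/\rho_0}$. The two choices of~$c$ are archimedean-equivalent and either one works. Your version is in fact more careful on one point: \prettyref{thm:main} is stated for $Q\in\Sk$, whereas the surreal~$Q$ of the corollary is not assumed to be a Skolem function, so the paper's final appeal to \prettyref{thm:main} tacitly requires precisely the substitution $Q\rightsquigarrow f_0$ that you make explicit.
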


\begin{proof} If $Q = 0$ the desired result is an immediate consequence of \prettyref{thm:main}. Assume $Q \neq 0$. 
	We can write $f = Q + r\m + o(\m)$. Thus $f/Q = 1 + r \m/Q + o(\m/Q)$. Let $c = Q/\m$. Then $c>\N$ and $f/Q = 1 +  r/c + o(1/c)$. By \prettyref{prop:sim-c}, $(f/Q)^c \sim e^r$. Let $D$ be the set of possible values of $e^r$ as $f$ varies. Since $\Sk$ is well ordered, $D$ is well ordered. Suppose for a contradiction that there is an increasing sequence $e^{r_n}\in D$ with an accumulation point $e^r\in \R$. We can then find $f_n\in \Sk$ with $(f_n/Q)^c \sim e^{r_n}$, contradicting \prettyref{thm:main}.   
\end{proof}

By \prettyref{thm:smallest-mon}, given two Skolem functions $f,g$, the smallest infinite monomial in the support of $f/g$ (seen as a surreal number) is $\x= \omega$. We thus obtain the following result, which extends to the whole class $\Sk$ the corresponding result of \citet{Dries1984} for the fragment below $2^{2^\x}$. 

\begin{cor}\label{cor:H} Let $g\in \Sk$. 
	For every finite sequence $r_0,\ldots, r_k$ of real numbers (empty if $k=-1$), there is a well ordered subset $R = R(g,r_0,\ldots, r_k) \subseteq \R$ without accumulation points such that for every $f\asymp g$ in $\Sk$ satisfying $$f/g = r_0 + r_1/\x + \ldots + r_k/\x^k + r_{k+1}/\x^{k+1}+ O(1/\x^{k+2})$$ we have $r_{k+1}\in R$. 
\end{cor}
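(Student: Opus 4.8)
The plan is to deduce the statement from \prettyref{thm:main}. The tempting naive route---peel off the first $k+1$ monomials of $f$ and invoke \prettyref{cor:discreteness}---does not work directly: $f/g$ is a quotient, and when the support of $g$ spreads over monomials of widely different magnitudes the prescribed coefficients $r_0,\dots,r_k$ do not cut out a truncation of $f$. Instead I would fix \emph{one} reference Skolem function $f^{*}\asymp g$ that realizes the prescribed initial segment, and compare every competing $f$ with $f^{*}$ rather than with $g$. The point is that $(f/g)^{\x^{k+1}}$ is in general not $\asymp 1$, so \prettyref{thm:main} cannot be applied with $Q=g$ and exponent $\x^{k+1}$; but $f/f^{*}=1+O(\x^{-(k+1)})$, so raising to the power $\x^{k+1}$ isolates exactly the coefficient $r_{k+1}$ and makes \prettyref{thm:main} applicable with $Q=f^{*}$.

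Concretely: if no $f\asymp g$ in $\Sk$ satisfies the prescribed expansion, take $R=\emptyset$. If $k=-1$ the expansion amounts to $f/g\sim r_0$, and the set of admissible $r_0$ is precisely the set produced by \prettyref{thm:main} with $c=1$, $Q=g$, which is well ordered with no accumulation points. So assume $k\ge 0$ and fix $f^{*}\in\Sk$ with $f^{*}\asymp g$ and
$$f^{*}/g=r_0+r_1\x^{-1}+\dots+r_k\x^{-k}+r^{*}_{k+1}\x^{-(k+1)}+O(\x^{-(k+2)})$$
for some real $r^{*}_{k+1}$; note $r_0>0$, since $f^{*},g>0$ and $f^{*}/g$ tends to $r_0$. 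For an arbitrary $f\asymp g$ in $\Sk$ with the prescribed expansion and next coefficient $r_{k+1}$, subtract the two expansions of $f/g$ and $f^{*}/g$, divide by $f^{*}$, and use $g/f^{*}=r_0^{-1}+O(\x^{-1})$ to obtain
$$f/f^{*}=1+\tfrac{r_{k+1}-r^{*}_{k+1}}{r_0}\,\x^{-(k+1)}+O(\x^{-(k+2)}).$$
Since $\x^{k+1}>\N$ and the error term $O(\x^{-(k+2)})$ is $o(\x^{-(k+1)})$, \prettyref{prop:sim-c} applied with $z=f/f^{*}$ and $c=\x^{k+1}$ gives $(f/f^{*})^{\x^{k+1}}\sim e^{(r_{k+1}-r^{*}_{k+1})/r_0}$.

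Finally I apply \prettyref{thm:main} with $Q=f^{*}$ and $c=\x^{k+1}$: the set $S$ of $\rho\in\R^{>0}$ such that some $h\in\Sk$ satisfies $(h/f^{*})^{\x^{k+1}}\sim\rho$ is well ordered and has no accumulation points in $\R$. Setting $R=\{\,r_0\log\rho+r^{*}_{k+1}:\rho\in S\,\}$ and using that $\rho\mapsto r_0\log\rho+r^{*}_{k+1}$ is a strictly increasing homeomorphism of $\R^{>0}$ onto $\R$, the set $R$ is again well ordered with no accumulation points in $\R$; and by the previous step every next coefficient $r_{k+1}$ realized by a Skolem function $f\asymp g$ with the prescribed expansion lies in $R$. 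The two points that need care are the decision to normalize by the reference function $f^{*}$ rather than by $g$ (otherwise $(f/g)^{\x^{k+1}}$ need not be $\asymp 1$ and \prettyref{thm:main} does not apply), and the bookkeeping showing the leftover term is $o(\x^{-(k+1)})$ so that \prettyref{prop:sim-c} may legitimately be invoked with exponent $\x^{k+1}>\N$ --- which is also why $k=-1$ is treated separately; everything else is routine expansion arithmetic.
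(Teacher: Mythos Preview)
Your proof is correct and follows the same template as the paper's proof of \prettyref{cor:discreteness}, which is the immediately preceding result and the one the paper implicitly points to (the paper gives no separate argument for \prettyref{cor:H}). In both cases one writes the candidate $f$ as $1+r/c+o(1/c)$ times a fixed reference, invokes \prettyref{prop:sim-c} to turn this into $(f/\text{reference})^{c}\sim e^{r}$, and then appeals to \prettyref{thm:main}. Your only genuine addition is the observation that the reference must be a Skolem function for \prettyref{thm:main} to apply, whence the choice of $f^{*}\in\Sk$ rather than the surreal $g(r_0+\dots+r_k\x^{-k})$; this is a real point, and the same adjustment is tacitly needed in the paper's proof of \prettyref{cor:discreteness} as well (there one should replace the surreal $Q$ by the Skolem function $f_0$ before invoking \prettyref{thm:main}).
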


\section{Levitz's regular functions}
We say that $f\in \Sk$ is an \textbf{additive scale} if the sum of two Skolem functions less than $f$ is less then $f$. We define $f$ to be a \textbf{multiplicative scale} if the product of two Skolem functions less then $f$ is less then $f$. 
Clearly every additive scale is additively irreducible and every multiplicative scale is multiplicatively irreducible. It is also easy to see that a multiplicative scale $f\neq 2$ is an additive scale. Indeed if $f$ is not an additive scale, there is $g<f$ with $g+g \geq f$. Since $f\neq 2$, we have $g\neq 1$, so $gg \geq g+g \geq f$, contradicting the fact that $f$ is a multiplicative scale. We have thus proved that a multiplicative scale $\neq 2$ is a component (recall that $f$ is a component if it is both additively and multiplicatively irreducible). 
%

We say that $h\in \Sk$ is \textbf{regular} if $h\neq 1$ and for all Skolem functions $f<h$ we have $f^\x < h$. Regular functions play a crucial role in the work of  \citet{Levitz1978}. Every regular function is a multiplicative scale, so it is either equal to $2$ or a component. In the rest of the sections we characterize the regular functions $\leq 2^{\x^\x}$. 

\begin{prop}\label{prop:small-components}
	The components $<\x^\x$ are $1,\x$ and $p^\x$ with $p\in \N$ prime. 
\end{prop}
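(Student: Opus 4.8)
The plan is to combine the structure theorem for components (\prettyref{prop:normal-form}) with a growth-rate analysis, proving both inclusions: every component $<\x^\x$ is one of $1$, $\x$, $p^\x$ ($p$ prime), and conversely each of these is a component $<\x^\x$.

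For the first inclusion, let $h$ be a component with $\x<h<\x^\x$. By \prettyref{prop:normal-form} I may write $h=f^g$ with $f\geq 2$, $g\geq\x$, $g$ a component, and $f$ multiplicatively irreducible. The first step is to show $g=\x$: if not, then $g>\x$ is a component, so by \prettyref{prop:normal-form} again $g\geq 2^\x$, whence $h=f^g\geq 2^{2^\x}$; but $2^{2^\x}\succ\x^\x$ because $\log(2^{2^\x})/\log(\x^\x)=2^\x\log 2/(\x\log\x)\to\infty$, contradicting $h<\x^\x$. Thus $h=f^\x$, and from $f^\x<\x^\x$ we get $f<\x$ (since $f\geq\x$ would force $f^\x\geq\x^\x$); hence $f$ is a Skolem function below $\x$, i.e.\ a positive integer, with $f\geq 2$. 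A positive integer $n\geq 2$ is multiplicatively irreducible in $\Sk$ exactly when it is prime (a composite $n=ab$ with $a,b\geq 2$ gives a factorization into smaller Skolem functions). So $f=p$ is prime and $h=p^\x$.

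For the converse, $1$ and $\x$ are clearly components, and $p^\x<\x^\x$ since $(p/\x)^\x\to 0$. It remains to prove that $p^\x$ is additively and multiplicatively irreducible. The key tool will be a lemma, proved by induction on the structure of Skolem terms, asserting both: (a) if $t\in\Sk$ and $t\asymp p^\x$, then $t\geq p^\x$; and (b) a Skolem function $\prec 2^\x$ is a polynomial with coefficients in $\N$, while a Skolem function whose logarithm grows linearly (finite exponential rate) is $\asymp n^\x$ for a unique integer $n\geq 2$, of rate $\log n$. Granting (a), additive irreducibility is immediate: if $p^\x=A+B$ with $A\geq B\geq 1$ in $\Sk$, then $A\asymp p^\x$, hence $A\geq p^\x$, so $B=p^\x-A\leq 0$, contradicting $B\geq 1$. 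For multiplicative irreducibility, suppose $p^\x=AB$ with $A,B<p^\x$ in $\Sk$ and (since $A,B\neq 1$) $A,B\geq 2$. Evaluating at integers, $A(n)B(n)=p^n$ with $p$ prime forces $A(n),B(n)$ to be powers of $p$. If $A$ is constant then $A=p^j$ and $B=p^{\x-j}\asymp p^\x$ with $B<p^\x$, contradicting (a); likewise if $B$ is constant. If both are non-constant they are $\geq\x$, and by (b) each is either a non-constant polynomial --- excluded, since a polynomial taking only values $p^j$ must be constant (consecutive values cannot jump by a factor $p$) --- or $\asymp n^\x$ for an integer $n\geq 2$; their exponential rates $c_A,c_B>0$ satisfy $c_A+c_B=\log p$, so $A\asymp n_A^\x$, $B\asymp n_B^\x$ with $n_An_B=p$ and $n_A,n_B\geq 2$, impossible for $p$ prime. (The remaining "polynomial-times-exponential'' possibilities for a factor are eliminated the same way, using that $p^\x/\x^k$ is not a function $\N\to\N$, hence not a Skolem function, combined with (a).) Hence $p^\x$ is a component.

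The main obstacle is the converse, and inside it the inductive lemma (a)--(b) controlling the archimedean class and growth rate of an arbitrary Skolem function. One must show simultaneously that the only finite exponential rates realised are $\log n$ for $n\in\N$, $n\geq 2$, and that no Skolem function lies strictly below $p^\x$ within its archimedean class. This uses \prettyref{cor:support} (no logarithmic or fractional-power monomials occur in supports), the Ressayre-form computations of Section~\ref{sec:surreal-expansions}, and the fact that a Skolem function below $\x$ is a constant; the induction on term complexity has to treat $t=a+b$, $t=ab$, $t=a^b$, with the delicate case being $a^b$ with $b$ infinite, where the leading monomial of $a^b$ is governed by $b\log a$ and one must track how $\log a$ splits into its purely infinite, real, and infinitesimal parts.
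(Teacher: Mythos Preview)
Your forward direction is essentially the paper's own argument: write a component $h$ with $\x<h<\x^\x$ as $f^g$ via \prettyref{prop:normal-form}, rule out $g>\x$ (since then $g\ge 2^\x$ and $h\ge 2^{2^\x}>\x^\x$), and conclude $f$ is a prime integer. The paper's proof is exactly this, only more tersely stated; in particular the paper does \emph{not} prove the converse inclusion at all, and every later use of the proposition (in \prettyref{lem:stratification}, \prettyref{prop:sup2nx}, \prettyref{thm:2nx}) only invokes the forward direction.

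Where you go beyond the paper, your plan has a real flaw. Lemma~(b) as you state it is false: $\x\cdot 2^\x$ is a Skolem function whose logarithm grows linearly with rate $\log 2$, yet $\x\cdot 2^\x\not\asymp n^\x$ for any integer $n$. Your multiplicative-irreducibility argument then rests on the incorrect dichotomy ``polynomial or $\asymp n^\x$'', and the parenthetical about ``polynomial-times-exponential'' cases does not repair it (there are more shapes than $\x^k n^\x$, and you never actually prove the structure lemma you need). If you do want to establish the converse, there is a much shorter route that bootstraps from the forward direction you have already proved: given any $t\in\Sk$ with $t\asymp p^\x$, write $t$ as a finite sum of products of components; every component appearing is $<\x^\x$, hence by the forward direction lies in $\{1,\x\}\cup\{q^\x:q\text{ prime}\}$, so each summand has the form $\x^k(\prod_j q_j)^\x$; the condition $t\asymp p^\x$ forces the dominant summand to have $k=0$ and $\prod_j q_j=p$, whence (since $p$ is prime) that summand equals $p^\x$, and $t\ge p^\x$. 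This immediately gives additive irreducibility, and the same computation applied to a putative factorisation $p^\x=AB$ forces one factor to have dominant summand $p^\x$ and the other to be a positive integer, which is then excluded by the minimality just proved.
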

\begin{proof}
	If $h$ is a component $>\x$, we can write $h = f^g$ where $f\geq 2$ is multiplicatively irreducible and $g$ is a component $\geq \x$ (\prettyref{prop:normal-form}). Since $h  <\x^\x$, we must have $g\leq \x$ and $f<\x$, so $h = p^\x$ with $p$ a prime in $\N$. 
\end{proof}

\begin{lem} \label{lem:stratification} Let $n>0$. 
	If $f$ is a Skolem function $<2^{(n+1)^\x}$, then there is $k\in \N$ such that $f<2^{n^\x \x^k}$. It follows that $f^\x < 2^{(n+1)^\x}$, hence $2^{(n+1)^\x}$ is a regular function. 
\end{lem}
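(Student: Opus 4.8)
The plan is to prove the first assertion — that every $f\in\Sk$ with $f<2^{(n+1)^\x}$ satisfies $f<2^{n^\x\x^k}$ for some $k\in\N$ — by a double induction: an outer induction on $n\ge 1$, and, for each fixed $n$, an inner induction on the Skolem term defining $f$. Granting this, the remaining two statements are immediate: from $f<2^{n^\x\x^k}$ we get $f^\x<2^{n^\x\x^{k+1}}$, and since $n^\x\x^{k+1}\prec(n+1)^\x$ this is $<2^{(n+1)^\x}$; as this holds for every Skolem $f<2^{(n+1)^\x}$ and $2^{(n+1)^\x}\neq1$, the function $2^{(n+1)^\x}$ is regular by definition.

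For the base case $n=1$ (hypothesis $f<2^{2^\x}$, target $f<2^{\x^k}$) I run the inner induction. The cases $f=1,\x$ are trivial; if $f=g+h$ or $f=gh$ then $g,h\le f<2^{2^\x}$, so the inner hypothesis gives $g,h<2^{\x^{k'}}$ for a common $k'$, whence $f<2^{\x^{k'+1}}$ for large $\x$. In the exponential case $f=g^h$ with $g\ge2$, the inequality $g^h<2^{2^\x}$ yields $h\log_{2}g<2^\x$, so $h<2^\x$ (using $\log_{2}g\ge1$); thus $h$ is a polynomial, $h<\x^m$, and combining with $g<2^{\x^a}$ from the inner hypothesis gives $f=g^h<(2^{\x^a})^{\x^m}=2^{\x^{a+m}}$. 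A non-polynomial $h$ cannot occur here, since $h\ge2^\x$ would force $\log_{2}g<1$.

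For the inductive step, fix $n\ge2$ and assume the statement for all $1\le n'<n$; again I run an inner induction on the term, the cases $1,\x,g+h,gh$ being handled exactly as above. The essential case is $f=g^h$ with $g\ge2$. Then $g,h\le g^h<2^{(n+1)^\x}$, and $g^h<2^{(n+1)^\x}$ gives $h\log_{2}g<(n+1)^\x$, hence $h<(n+1)^\x$. Since $(n+1)^\x\prec\x^\x$, \prettyref{rem:sumprodcomponents} together with \prettyref{prop:small-components} lets me write $h$ as a finite sum of finite products of components $1,\x,p^\x$, so its leading term has the shape $c\,\x^e m^\x$ with $c,m\in\N^{>0}$, $e\in\N$, and $h<(n+1)^\x$ forces $m\le n$. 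If $m=1$ then $h$ is a polynomial and I finish as in the base case using the inner hypothesis on $g$. If $m\ge2$, then (all terms of $h$ being positive) $h\ge m^\x$ for large $\x$, so $\log_{2}g<(n+1)^\x/h\le((n+1)/m)^\x$, i.e.\ $g<2^{r^\x}$ for large $\x$, where $r:=(n+1)/m$ satisfies $1<r\le(n+1)/2<n$. Setting $n':=\lfloor r\rfloor$ when $r\notin\Z$ and $n':=r-1$ when $r\in\Z$, one checks $1\le n'<n$ and $g<2^{(n'+1)^\x}$, so the outer hypothesis applied to $g$ gives $g<2^{(n')^\x\x^{k'}}$ for some $k'$; since also $h<\x^{e+1}m^\x$ for large $\x$, we get $\log_{2}f=h\log_{2}g<\x^{e+1}m^\x\cdot(n')^\x\x^{k'}=(mn')^\x\x^{e+1+k'}$. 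Finally $mn'\le n$: if $m\nmid n+1$ then $mn'=m\lfloor r\rfloor$ is a multiple of $m$ strictly below $n+1$, hence $\le n$; if $m\mid n+1$ then $mn'=m(r-1)=n+1-m\le n-1$. Therefore $f<2^{n^\x\x^{e+1+k'}}$, completing the induction.

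The step I expect to be the main obstacle is precisely this arithmetic in the exponential case: one must bound $g$ not merely by ``something $<2^{(n+1)^\x}$'' but by $2^{(n'+1)^\x}$ with $n'$ small enough that, after multiplying the exponent of $g$ recovered from the outer hypothesis by $h$ (whose size is controlled only up to a polynomial factor times $m^\x$), the product $mn'$ of exponents does not exceed $n$. The touchiest point is the resonant subcase $m\mid n+1$: there $r=(n+1)/m$ is an integer and the crude estimate yields only a bound of the form $2^{(n+1)^\x\x^{j}}$, so one must use $h\ge m^\x$ as a genuine pointwise inequality (valid for large $\x$ because the leading coefficient of $h$ is a positive integer) to obtain $\log_{2}g<r^\x$ and so legitimately descend to $n'=r-1$, whereupon $mn'=n+1-m\le n-1$. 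Keeping the bookkeeping of $\preceq$ versus eventual strict $<$ honest is the reason every inequality above is phrased ``for large $\x$''; the structural input \prettyref{prop:small-components} is what pins down the leading monomial of $h$ and thereby makes the resonant subcase work.
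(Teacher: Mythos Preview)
Your argument is correct and follows the same two-level descent as the paper: an outer induction on~$n$ and an inner one on the Skolem function, the crux being, in the exponential case, to bound the base by $2^{(n'+1)^\x}$ for some $n'<n$, apply the outer hypothesis, and verify the integer inequality $mn'\le n$.

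The paper's execution is more economical in two respects. First, it argues by minimal counterexample in $n$ and $f$ simultaneously, so no separate base case $n=1$ is needed: for $n=1$ the case $b=p^\x$ simply cannot occur, there being no prime $p<2$. Second, it invokes \prettyref{prop:normal-form} to assume that a minimal counterexample is already a component $a^b$ with $b$ itself a component, hence $b=p^\x$ for a single prime $p\le n$; this bypasses your leading-monomial analysis of a general exponent~$h$ and reduces the arithmetic to choosing $q$ minimal with $qp\ge n+1$, noting $2\le q\le n$, obtaining $a<2^{q^\x}$, descending to $n'=q-1$, and concluding via $(q-1)p\le n$. Your $m$ and $n'$ play exactly the roles of $p$ and $q-1$, and your case split on whether $m\mid n+1$ is precisely the distinction between $qp=n+1$ and $qp>n+1$.
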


\begin{proof} For a contradiction let $n>0$ be minimal such that the statement fails. Let $H_n\subset \Sk$ be the set of Skolem functions bounded by one of the functions $2^{n^\x \x^k}$ as $k$ ranges in $\N$.  Let $f< 2^{(n+1)^\x}$ be the minimal Skolem function such that $f\nin H_n$. We need to consider the following cases. 
	\begin{enumerate}
		\item $f$ is not a component;
		\item $f$ is either $1$ or $\x$;
		\item $f$ is a component of the form $a^\x$; 
		\item $f$ is a component of the form $a^b$ with $b>\x$. 
	\end{enumerate}
	Case (2) is clearly impossible. Cases (1) and (3) are also impossible by the minimality of $f$ and the fact that $H_n$ is closed under sums, products and exponentiation to the power $\x$. Finally, in case (4),  by \prettyref{prop:small-components}, we can write $b= p^\x$ with $p$ prime $<n+1$. Let $q\in \N$ be mimimal such that $qp \geq n+1$ and notice that $2 \leq q\leq n$. We must have $a < (2^{q^\x})$, so by the minimality of $n$ we have  $a<2^{(q-1)^\x \x^k}$ for some $k\in \N$. It follows that $f=a^b < 2^{((q-1)p)^\x \x^k}\leq 2^{n^\x \x^k}$. 
\end{proof}

\begin{prop}\label{prop:sup2nx}
	Let $f<2^{\x^\x}$ be a Skolem function. Then there is $n\in \N$ such that $f<2^{n^\x}$. It follows that $2^{\x^\x}$ is the smallest regular function bigger than $2^{n^\x}$ for all positive $n\in \N$. 
\end{prop}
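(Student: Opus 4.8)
The plan is to induct on the build-up of the Skolem function $f < 2^{\x^\x}$, in much the same spirit as \prettyref{lem:stratification}. First I would argue by contradiction: let $f < 2^{\x^\x}$ be the minimal Skolem function not bounded by any $2^{n^\x}$. Using \prettyref{cor:exponents}, $f$ falls into one of four cases. The cases $f = 1$, $f = \x$ are trivial, and the case $f = g_1 g_2$ with $g_1, g_2 \geq \x$ is handled by minimality once we observe that the class of functions bounded by some $2^{n^\x}$ is closed under products: if $g_1 < 2^{m^\x}$ and $g_2 < 2^{k^\x}$, then $g_1 g_2 < 2^{(m+k)^\x}$ (since $2^{m^\x} \cdot 2^{k^\x} = 2^{m^\x + k^\x} \le 2^{(m+k)^\x}$ for $m,k \ge 2$, the borderline small cases being absorbed easily). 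The additive case $f = g + h$ with $g \asymp f$ a component is likewise immediate from closure under sums.

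The heart of the matter is the case $f = a^b$ with $a \geq 2$ and $b$ a component $\geq \x$. Since $f < 2^{\x^\x}$, we cannot have $b \geq \x^\x$ — indeed, even $a = 2$ would already give $f = 2^b \geq 2^{\x^\x}$ — so $b < \x^\x$, and by \prettyref{prop:small-components} the component $b$ has the form $p^\x$ for a prime $p \in \N$ (it cannot be $1$ or $\x$ since $b \ge \x$ and $a^\x$ would force $b = \x$ yielding $f = a^\x$, which is bounded by $2^{n^\x}$ for suitable $n$ by \prettyref{lem:stratification} applied to $a$ being below some $2^{m^\x}$; actually when $b = \x$ we are in a case where $f = a^\x$ with $a < 2^{\x^\x}$, so by minimality $a < 2^{m^\x}$ for some $m$, and then $f = a^\x < 2^{m^\x \cdot \x} \le 2^{(m+1)^\x}$). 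So assume $b = p^\x$ with $p$ prime. Now $f = a^{p^\x} < 2^{\x^\x}$ forces $a < 2^{\x^\x}$ (taking $\x$-th roots, or just noting monotonicity), so by minimality $a < 2^{n^\x}$ for some $n \in \N$. Then $f = a^{p^\x} < \bigl(2^{n^\x}\bigr)^{p^\x} = 2^{n^\x p^\x} = 2^{(np)^\x}$, which contradicts the choice of $f$.

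The second assertion, that $2^{\x^\x}$ is the smallest regular function exceeding every $2^{n^\x}$, follows by combining the first part with \prettyref{lem:stratification}: each $2^{(n+1)^\x}$ is regular, so $2^{\x^\x}$ is certainly not the first regular function above $2^{n^\x}$ for any \emph{fixed} $n$; but by the first part, any Skolem function $f < 2^{\x^\x}$ lies below some $2^{n^\x}$, so there is no regular function strictly between the family $\{2^{n^\x} : n \in \N\}$ and $2^{\x^\x}$. It remains only to check that $2^{\x^\x}$ is itself regular, i.e.\ that $f < 2^{\x^\x}$ implies $f^\x < 2^{\x^\x}$: given such $f$, pick $n$ with $f < 2^{n^\x}$; then $f^\x < 2^{n^\x \cdot \x} \le 2^{(n+1)^\x} < 2^{\x^\x}$.

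I expect the main obstacle to be the bookkeeping around the small/borderline cases — in particular making sure the closure properties of the set $\{h \in \Sk : h < 2^{n^\x} \text{ for some } n\}$ under sums, products and $\x$-th powers are stated cleanly (handling $a = 2$, $b = \x$, and $n = 1$ gracefully), and being careful that "$b$ is a component $\ge \x$ and $f = a^b < 2^{\x^\x}$" genuinely forces $b$ to be one of the small components classified in \prettyref{prop:small-components} rather than, say, something like $\x^2$ (which is not a component, hence excluded, but one should say why the exponent in the canonical form from \prettyref{cor:exponents} is a component). None of these is deep, but they are exactly the places where an induction of this shape can go wrong if the case analysis is not exhaustive.
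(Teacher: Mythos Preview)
Your proposal is correct and follows essentially the same route as the paper's proof: both argue by induction/minimal counterexample, reduce to the component case via closure of the set $\{h : h < 2^{n^\x} \text{ for some } n\}$ under sums and products, and then in the component case $f = a^b$ use \prettyref{prop:small-components} to get $b = \x$ or $b = p^\x$, applying the induction hypothesis to $a$ and bounding by $2^{(m+1)^\x}$ or $2^{(mp)^\x}$ respectively. Your explicit verification that $2^{\x^\x}$ is regular (which the paper leaves implicit in ``It follows that\ldots'') is a welcome addition, and your worries in the final paragraph are all non-issues: the exponent $b$ is a component by the very statement of \prettyref{cor:exponents}, and the borderline cases dissolve once you note $a < f$ so minimality applies.
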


\begin{proof} Let $f<2^{\x^\x}$ and assume by induction that the proposition holds for all Skolem functions $<f$. If $f$ is not a component, then it is a sum of products of smaller functions, and we conclude observing the 
	the Skolem functions less that $2^{n^\x}$ for some $n\in \N$ form an initial segment closed under sums and products. The cases $f=1$ or $f=\x$ are trivial. It remains to consider the case when $f$ is a component of the form $a^b$ where $b>1$ is a component and $a\geq 2$. Since $f=a^b <2^{\x^\x}$, we have $b<\x^\x$. By \prettyref{prop:small-components}, either $b=\x$ or $b = p^\x$ for some prime $p\in \N$.  By the induction hypothesis $a<2^{m^\x}$ for some $m\in \N$, hence in the first case $f=a^b < 2^{{(m+1)}^\x}$ and in the second case $f=a^b < 2^{m^\x p^\x}= 2^{(mp)^\x}$. In either case $f\leq 2^{n^\x}$ for a suitable $n$. 
\end{proof}

\section{The fragment of van den Dries and Levitz}
Van den Dries and Levitz \cite{Dries1984} proved that $|2^{2^\x}| = \omega_3 = \omega^{\omega^\omega}$. As a preparation for the results in the next section we give a proof of the inequality $|2^{2^\x}| \leq \omega_3$ based on \prettyref{cor:classes-within}.  Thanks to the fact that \prettyref{cor:classes-within} holds for the whole class $\Sk$, we shall then be able to extend the result to bigger fragments with a similar technique. 

We recall that given a set $X\subseteq \Sk$, $\sum X$ is the set of finite non empty sums of elements of $X$ (we exclude the empty sum because $0$ is not a Skolem function). Similarly, we write $\prod X$ for the set of finite products of elements of $X$, with the convention that the empty product is $1$.

\begin{thm}[\citep{Dries1984}] \label{thm:DL}  $|2^{2^\x}|\leq \omega^{\omega^\omega}$. Moreover the set of archimedean classes of the set of Skolem functions $<2^{2^\x}$ has order type $\leq \omega^\omega$. \end{thm}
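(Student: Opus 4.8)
The plan is to estimate $|2^{2^\x}|$ by stratifying the Skolem functions below $2^{2^\x}$ according to their ``complexity'' and controlling, at each level, both the order type and the order type of the set of archimedean classes, using \prettyref{cor:bounds-on-sums} for finite sums, \prettyref{cor:binary-bounds} for finite products, and \prettyref{cor:classes-within} for the passage $A\mapsto A^\x$. Concretely, let $H$ be the set of Skolem functions $<2^{2^\x}$. By \prettyref{lem:stratification} applied with $n=1$, every $f\in H$ satisfies $f<2^{\x^k}$ for some $k\in\N$, i.e.\ $H=\bigcup_{k}H_k$ where $H_k$ is the set of Skolem functions $<2^{\x^k}$. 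Since the union is an increasing chain of initial segments, it suffices to show $|H_k|<\omega^{\omega^\omega}$ and $|H_k/\!\asymp|<\omega^\omega$ for each $k$, and then take the sup.

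First I would set up the induction on $k$. The base case is $H_1$: the Skolem functions $<2^\x$ are the nonzero polynomials with coefficients in $\N$, so $|H_1|=\omega^\omega$ and one checks $|H_1/\!\asymp|=\omega$. For the inductive step, I would analyze an arbitrary component $f<2^{\x^{k+1}}$ using \prettyref{cor:exponents} and \prettyref{prop:small-components}: such a component is $1$, $\x$, or of the form $a^b$ with $a\geq 2$ and $b$ a component $\geq\x$; since $a^b<2^{\x^{k+1}}$ forces $b<\x^\x$, we get $b=\x$ or $b=p^\x$ with $p$ prime, and in either case $b\leq\x^k$ while $a$ is a Skolem function in a lower stratum. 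Thus every component below $2^{\x^{k+1}}$ lies in $(H_{k})^{B}$ where $B=\{\x\}\cup\{p^\x:p\text{ prime}\}$, and a general element of $H_{k+1}$ is a finite sum of finite products of such components. The combinatorial engine is then: if $C$ is the set of components $<2^{\x^{k+1}}$, then $H_{k+1}\subseteq \sum\prod C$, so $|H_{k+1}|$ and $|H_{k+1}/\!\asymp|$ are bounded in terms of $|C|$ and $|C/\!\asymp|$ via iterated applications of Corollaries~\ref{cor:bounds-on-sums}, \ref{cor:binary-bounds}, and \ref{cor:classes-within} (the last to bound $|C^{\{\x\}}/\!\asymp|=|C/\!\sim|\leq\omega|C/\!\asymp|$, and to pass from $A$ to $A^\x$ inside exponentials with base-$\x$-type exponents).

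The main obstacle, I expect, is bookkeeping the two invariants $|{-}|<\omega^{\omega^\omega}$ and $|{-}/\!\asymp|<\omega^\omega$ simultaneously and verifying they are preserved under the three closure operations (finite sums, finite products, raising to a base-$\x$ power), so that the induction on $k$ closes with the \emph{same} bounds at every level rather than bounds that grow with $k$. This requires, for the archimedean-class count, that $\omega^\omega$ be multiplicatively closed (so products stay below it) and that the $\omega\cdot(-)$ inflation from \prettyref{cor:classes-within} not escape $\omega^\omega$; and for the order-type count, that $\omega^{\omega^\omega}$ absorb $\cexp{(\alpha^\omega)}{\beta}$ whenever $\alpha<\omega^{\omega^\omega}$ and $\beta<\omega^\omega$, which is exactly \prettyref{lem:property-of-omega-n} with $n=2$ (since $\omega_3=\omega^{\omega^\omega}$ and $\omega_2=\omega^\omega$). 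Once these absorption facts are in place, the sup over $k$ gives $|2^{2^\x}|\leq\omega^{\omega^\omega}$ and $|H/\!\asymp|\leq\omega^\omega$, completing the proof.
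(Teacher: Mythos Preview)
Your approach is essentially the paper's: stratify via \prettyref{lem:stratification} with $n=1$, induct on the stratum index, decompose into components, and close the induction using \prettyref{cor:classes-within}, \prettyref{cor:binary-bounds}, and \prettyref{cor:bounds-on-sums} (with \prettyref{lem:property-of-omega-n} for $n=2$ underneath). The paper writes the key inclusion as $S_d\subseteq\sum\prod(\x^\N\cup S_{d-1}^\x)$, which is exactly your $H_{k+1}\subseteq\sum\prod C$.

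One correction: your component analysis includes the spurious case $b=p^\x$. Below $2^{2^\x}$ this case never occurs, since $a^{p^\x}\geq 2^{2^\x}$ for any $a\geq 2$ and prime $p\geq 2$; and your assertion ``in either case $b\leq\x^k$'' is false for $b=p^\x$ (indeed $p^\x>\x^k$ for every $k$). So drop that case and take $B=\{\x\}$, which is what the paper does implicitly. You should also handle the components $1$ and $\x$ explicitly (the paper absorbs them into the factor $\x^\N$), since they are not of the form $a^b$ with $a\in H_k$ and $b\in B$.
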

\begin{proof} 
	Let $A$ be the set of Skolem functions $< 2^{2^\x}$. We need to prove that 
	$|A|\leq \omega_3$ and $\class{A} \leq \omega_2$, where $A/\! \asymp$ is the set of $\asymp$-classes of elements of $A$.  
	
	By \prettyref{lem:stratification} for $n=1$ we can write 
	$$A = \bigcup_{d\in \N} S_{d}$$ 
	where $S_d$ is set of Skolem functions $<2^{\x^d}$. 
	
	By induction on $d$ we show that   
	$$|S_{d}|<\omega_{3} \quad \quad  \text{and} \quad \quad \class{S_{d}}<\omega_{2}.$$
	Granted this, the supremum over $d$ of these ordinals is $\leq \omega_{3}$ and $\leq \omega_{2}$ respectively, yielding the desired bounds $|A|\leq \omega_3$ and $\class{A} \leq \omega_2$.  
	
	The case $d=0$ of the inductive proof is obvious, so assume $d>0$. Writing a Skolem function as a finite sum of finite products of components, and observing that $g^\x \leq 2^{\x^d} \implies g < 2^{\x^{d-1}}$, we obtain
	$$S_{d} \subseteq \sum \prod (\x^\N \cup S_{d-1}^\x )$$ 
	By the induction hypothesis $|S_{d-1}| < \omega_{3}$ and $\class{S_{d-1}}<\omega_{2}$. Now observe that $|S_{d-1}^\x| = |S_{d-1}| < \omega_3$. Moreover by \prettyref{cor:classes-within} we have
	$$\class{S_{d-1}^\x} \;\leq \; \omega\class{S_{d-1}} \;<\;\omega_{2}$$
	(because the set of ordinals $<\omega_2$ is closed under multiplication by $\omega$). Letting $X = \x^\N\cup S_{d-1}^\x$, it follows that $|X|<\omega_3$ and $\class{X}<\omega_2$. Now observe that each element of $\prod X$ is a product of at most $2$ elements of $X$ (because $\x^\N$ and $S_{d-1}^\x$ are closed under finite products). By \prettyref{cor:binary-bounds} we then obtain $|\prod X|< \omega_{3}$ and $\class{\prod X}<\omega_2$. By  \prettyref{cor:bounds-on-sums} we conclude that $|\sum \prod X| < \omega_{3}$ and $\class{\sum \prod X} < \omega_{2}$. Since $S_{d}$ is included in $\sum \prod X$ we get the desired bounds.
\end{proof}

\section{Fragments bounded by larger regular functions}
We have seen that $|2^{2^\x}| \leq \omega_3$. The following result gives bounds on $|2^{n^\x}|$. In particular $|2^{3^\x}| \leq \omega_4$, $|2^{4^\x}| \leq \omega_5$, and so on. 

\begin{thm}\label{thm:2nx}
	Let $1\leq n\in \N$. Then $|2^{(n+1)^\x}| < \omega_{n+2}$. 
	Moreover the set of archimedean classes of the set of Skolem functions $<2^{(n+1)^\x}$ has order type $\leq \omega_{n+1}$.
\end{thm}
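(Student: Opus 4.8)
The plan is to prove the result by induction on $n$, with the case $n=1$ being \prettyref{thm:DL} and the general step mimicking its proof. It is cleanest to strengthen the statement to a uniform bound on the fragments it is built from: for every $n\ge 1$ and every $d\in\N$ the set $S_d:=\{f\in\Sk:f<2^{n^\x\x^d}\}$ satisfies $|S_d|<\omega_{n+2}$ and $\class{S_d}<\omega_{n+1}$. Granting this, \prettyref{lem:stratification} exhibits $\{f\in\Sk:f<2^{(n+1)^\x}\}$ as the increasing union $\bigcup_{d}S_d$ of initial segments of $\Sk$, so its order type is $\sup_d|S_d|\le\omega_{n+2}$ and the order type of its set of archimedean classes is $\sup_d\class{S_d}\le\omega_{n+1}$, which is the theorem. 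The fragment bound for $n=1$ is precisely what is proved inside \prettyref{thm:DL}.

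For the inductive step fix $n\ge2$, assume the fragment bound at level $n-1$, and argue by a nested induction on $d$. When $d=0$ we have $S_0=\{f<2^{n^\x}\}$, which by \prettyref{lem:stratification} applied at level $n-1$ is a union of level-$(n-1)$ fragments, so $|S_0|\le\omega_{n+1}<\omega_{n+2}$ and $\class{S_0}\le\omega_{n}<\omega_{n+1}$. Let $d\ge1$. Writing $f\in S_d$ as a finite sum of finite products of components, each component $c\le f<2^{n^\x\x^d}$ is $1$, $\x$, or of the form $a^g$ with $a\ge2$ multiplicatively irreducible and $g\ge\x$ a component (\prettyref{prop:normal-form}). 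In the last case $2^g\le c<2^{n^\x\x^d}<2^{\x^\x}$, so $g<\x^\x$, and \prettyref{prop:small-components} forces $g=\x$ or $g=p^\x$ for a prime $p$; in the second case $2^{p^\x}\le c$ further forces $p\le n$. If $g=\x$ then, passing to logarithms, $a^\x<2^{n^\x\x^d}$ is equivalent to $a<2^{n^\x\x^{d-1}}$, i.e.\ $a\in S_{d-1}$, so $c\in S_{d-1}^\x$. If $g=p^\x$ then, taking $p^\x$-th roots, $a^{p^\x}<2^{n^\x\x^d}$ is equivalent to $a<2^{(n/p)^\x\x^d}$; since $p\ge2$ and $n\ge2$ give $n/p\le n-1$, this forces $a\in R_d:=\{f<2^{(n-1)^\x\x^d}\}$. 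Hence every component of $f$ lies in $X_d:=\x^\N\cup S_{d-1}^\x\cup\bigcup_{p\le n\ \mathrm{prime}}T_p$, where $T_p:=\{a^{p^\x}:a\in R_d\}$, and therefore $S_d\subseteq\sum\prod X_d$.

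The remaining work is to bound $X_d$ and propagate through $\prod$ and $\sum$. The map $a\mapsto a^\x$ is strictly increasing on $\Sk$, so $|S_{d-1}^\x|=|S_{d-1}|<\omega_{n+2}$ by the inner induction, and $\class{S_{d-1}^\x}=|S_{d-1}/\!\sim|\le\omega\,\class{S_{d-1}}<\omega_{n+1}$ by \prettyref{cor:classes-within} together with multiplicative closure of $\omega_{n+1}$. Likewise $a\mapsto a^{p^\x}$ is strictly increasing, so $|T_p|=|R_d|<\omega_{n+1}$ by the level-$(n-1)$ fragment bound, while the crude estimate $\class{T_p}\le|T_p|<\omega_{n+1}$ is all that is needed for the archimedean count. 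Since there are only finitely many primes $\le n$ and $\omega_{n+2},\omega_{n+1}$ are additively closed, $|X_d|<\omega_{n+2}$ and $\class{X_d}<\omega_{n+1}$. Iterating \prettyref{cor:binary-bounds} over the number of factors gives $|\prod X_d|\le\cexp{(|X_d|)}{\omega}=|X_d|^\omega<\omega_{n+2}$ and $\class{\prod X_d}\le\cexp{(\class{X_d})}{\omega}=\class{X_d}^\omega<\omega_{n+1}$ (using \prettyref{prop:cexp} and the closure properties of $\omega_{n+1},\omega_{n+2}$), and \prettyref{cor:bounds-on-sums} then yields $|\sum\prod X_d|<\omega_{n+2}$ with set of archimedean classes of order type $<\omega_{n+1}$. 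As $S_d\subseteq\sum\prod X_d$, this closes the nested induction, and hence the induction on $n$.

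The main obstacle, absent from the $n=1$ case of \prettyref{thm:DL}, is the presence of the ``double-exponential'' components $a^{p^\x}$ (no such components occur below $2^{\x^d}$). The key point is that the inequality $a^{p^\x}<2^{n^\x\x^d}$ forces the base $a$ into the strictly smaller fragment $\{f<2^{(n-1)^\x\x^d}\}$, already controlled by the induction on $n$; because that fragment has order type $<\omega_{n+1}$, the trivial bound $\class{T_p}\le|T_p|$ is enough and no finer analysis of the relation $\asymp_{p^\x}$ is required. The elementary surreal facts used ($n^\x\x^d<\x^\x$; $p^\x<n^\x\x^d\iff p\le n$ for $p$ prime; and the two ``root'' equivalences above) are routine via logarithms, as is the verification that $\omega_{n+1}$ and $\omega_{n+2}$ are closed under Hessenberg sum and product and under $\alpha\mapsto\alpha^\omega$.
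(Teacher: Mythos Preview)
Your overall architecture is the paper's: a double induction on $n$ and $d$, with the same inclusion $S_d\subseteq\sum\prod X_d$ and the same use of \prettyref{cor:classes-within} to control $\class{S_{d-1}^\x}$. Your treatment of the $a^{p^\x}$ components is a harmless variant: you use the sharp bound $a<2^{(n/p)^\x\x^d}\le 2^{(n-1)^\x\x^d}=:R_d$ together with the level-$(n-1)$ fragment hypothesis $|R_d|<\omega_{n+1}$, whereas the paper drops to $A_{n-2}=\{f<2^{(n-1)^\x}\}$ and uses $|A_{n-2}|\le\omega_n$.

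There is, however, a genuine gap at the product step. The estimate $|\prod X_d|\le\cexp{|X_d|}{\omega}=|X_d|^\omega$ does \emph{not} follow from ``iterating \prettyref{cor:binary-bounds} over the number of factors'': that corollary bounds the set of products of a \emph{fixed} length $m$, and the order type of the increasing union $\bigcup_m X_d^{(m)}$ is in general not bounded by $\sup_m|X_d^{(m)}|$, because the $X_d^{(m)}$ are not initial segments of one another. A clean counterexample already lives inside your $X_d$: with $X=\{1,\x,2^\x\}$ one has $|X|=3$, $|X|^\omega=\omega$, but $\prod X=\{\x^i 2^{j\x}:i,j\ge 0\}$ has order type $\omega^2$; the same set shows your bound $\class{\prod X_d}\le\class{X_d}^\omega$ fails too.

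The paper sidesteps this by never letting the number of factors go to infinity: it writes its $X$ as a union of finitely many sets each \emph{closed under multiplication} ($\x^\N$, $S_{n,d-1}^\x$, and the $A_{n-2}^{p^\x}$), so every element of $\prod X$ is a product of at most that many elements of $X$, and a single finite iteration of \prettyref{cor:binary-bounds} gives $|\prod X|<\omega_{n+2}$ and $\class{\prod X}<\omega_{n+1}$. To rescue your argument you must do the same. For the paper's $A_{n-2}^{p^\x}$ this is immediate, since $2^{(n-1)^\x}$ is regular and hence a multiplicative scale; for your $T_p=R_d^{p^\x}$ you would need $R_d=\{f<2^{(n-1)^\x\x^d}\}$ to be closed under products, which you have not argued. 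Either establish that (and the analogous closure of $S_{d-1}$, which both proofs need for $S_{d-1}^\x$), or replace $R_d$ by the multiplicatively closed $A_{n-2}$ as the paper does; in any case the ``$\omega$-fold iteration'' must be replaced by a bounded-length product argument.
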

\begin{proof}  Let $A_n$ be the set of all Skolem functions $<2^{(n+1)^\x}$. We prove by induction on $n$ that $|A_n| \leq \omega_{n+2}$ and $\class{A_n}\leq \omega_{n+1}$.   
	
	For $n=1$, $A_n$ is the set of Skolem functions $<2^{2^\x}$ so we can apply \prettyref{thm:DL}. 
	
	Assume $n>1$.  For $d\in \N$, let $S_{n,d}$ be the set of Skolem functions $<2^{n^\x \x^d}$. By \prettyref{lem:stratification}  $$A_n = \bigcup_{d\in \N} S_{n,d}.$$ By a secondary induction on $d$ we show that   
	$$|S_{n,d}|<\omega_{n+2} \quad \quad  \text{and} \quad \quad \class{S_{n,d}}<\omega_{n+1}.$$
	Granted this, the sup over $d$ of these ordinals is $\leq \omega_{n+2}$ and $\leq \omega_{n+1}$ respectively, yielding the desired bounds 
	$$|A_n|\leq \omega_{n+2} \quad \quad \text{and} \quad \quad \class{A_n} \leq \omega_{n+1}.$$  
	The case $d=0$ of the secondary induction follows from  $$S_{n,0} = A_{n-1}$$ applying the primary induction on $n$. 
	
	Assume $d>0$. We claim that
	$$S_{n,d} \subseteq \sum \prod (\x^\N \cup S_{n,d-1}^\x \cup A_{n-2}^{2^\x} \cup \ldots \cup A_{n-2}^{n^\x}).$$ 
	To prove the claim, it suffices to show that if $h\in S_{n,d}$ is a component, then it belongs to  
	$\x^\N \cup S_{n,d-1}^\x \cup A_{n-2}^{2^\x} \cup \ldots \cup A_{n-2}^{n^\x}$. We can assume that $h>\x$, so we can write $h$ in the form $h=f^g$ where $f\geq 2$ and $g$ is a component $\geq \x$ (\prettyref{prop:normal-form}). Since $h<2^{\x^\x}$, we have $g<\x^\x$, so either $g= \x$ or $g = p^\x$ for some prime $p\in \N$ (\prettyref{prop:small-components}).  Since $h\in S_{n,d}$, we have $h= f^g<2^{n^\x \x^d}$. So if $g=\x$ we get $f < 2^{n^\x \x^{d-1}}$ and therefore $h \in S_{n,d-1}^\x$. On the other hand if $g = p^\x$, then from $f^g<2^{n^\x \x^d}< 2^{(n+1)^\x}$ we obtain $f< 2^{(n-1)^\x}$ and $p\leq n$, so $h\in A_{n-2}^{2^\x} \cup \ldots \cup A_{n-2}^{n^\x}$ and the claim is proved. 
	
	By the primary induction $|A_{n-2}| \leq \omega_{n}$. By the secondary induction $|S_{n,d-1}| < \omega_{n+2}$ and $\class{S_{n,d-1}}<\omega_{n+1}$. It follows that $|S_{n,d-1}^\x| = |S_{n,d-1}| < \omega_{n+2}$. Moreover by \prettyref{cor:classes-within} we have
	$$\class{S_{n,d-1}^\x} \; \leq \; \omega  \class{S_{n,d-1}} <\omega_{n+1}.$$ 
	We also have $|A_{n-2}^{k^\x}| = |A_{n-2}|$ and $\class{A_{n-2}^{k^\x}} \leq |A_{n-2}^{k^\x}|$. Taking the union of these sets it follows that the set $X=\x^\N \cup S_{n,d-1}^\x \cup A_{n-2}^{2^\x} \cup \ldots \cup A_{n-2}^{n^\x}$ satisfies $|X|<\omega_{n+2}$ and $\class{X}< \omega_{n+1}$. The same bounds hold for $\prod X$ because each element of $X$ is a product of at most $n+1$ elements of $X$ (as $X$ is the union of $n+1$ sets closed under products). By  \prettyref{cor:bounds-on-sums} we conclude that $|\sum \prod X| < \omega_{n+2}$ and $\class{\sum \prod X} < \omega_{n+1}$. Since $S_{n,d}$ is included in $\sum \prod X$ we get the desired bounds.
\end{proof}

\begin{thm}\label{thm:2xx}
	The set of Skolem functions $<2^{\x^\x}$ has order type $\leq \eps_{0}$. 
\end{thm}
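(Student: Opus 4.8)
The plan is to deduce this from Proposition~\ref{prop:sup2nx} and Theorem~\ref{thm:2nx} by a straightforward passage to the limit, using that $\eps_0 = \sup_n \omega_n$.

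For $n \geq 1$ let $A_n$ denote the set of Skolem functions $<2^{(n+1)^\x}$, so that Theorem~\ref{thm:2nx} gives $|A_n| < \omega_{n+2}$ (and also $\class{A_n}\leq\omega_{n+1}$, though we do not need the bound on archimedean classes here). First I would record that the sets $A_n$ form an increasing chain of initial segments of $\Sk$: each $A_n$ has the form $\{f\in\Sk : f<c\}$ for the fixed Skolem function $c = 2^{(n+1)^\x}$, hence is an initial segment, and $A_n \subseteq A_{n+1}$ because $2^{(n+1)^\x} < 2^{(n+2)^\x}$. Next, by Proposition~\ref{prop:sup2nx}, every Skolem function $f<2^{\x^\x}$ satisfies $f<2^{m^\x}$ for some $m\in\N$, hence lies in $A_{\max(m-1,1)}$; conversely $A_n\subseteq\{f\in\Sk: f<2^{\x^\x}\}$ since $2^{(n+1)^\x}<2^{\x^\x}$. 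Therefore the set $B$ of Skolem functions $<2^{\x^\x}$ is exactly $\bigcup_{n\geq1}A_n$.

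Finally I would invoke the elementary fact that the order type of an increasing union of initial segments of a well ordered set equals the supremum of their order types. Combining this with the bound of Theorem~\ref{thm:2nx},
\[
|B| \;=\; \sup_{n\geq1}|A_n| \;\leq\; \sup_{n\geq1}\omega_{n+2} \;=\; \eps_0,
\]
where the final equality holds because $(\omega_{n+2})_{n\geq1}$ is cofinal in the sequence $(\omega_n)_n$ (recall $\omega_1=\omega$, $\omega_2=\omega^\omega$, $\omega_3=\omega^{\omega^\omega}$, and so on) while $\eps_0 = \sup_n\omega_n$. This yields $|2^{\x^\x}|\leq\eps_0$, as desired.

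There is no serious obstacle in this argument: all of the combinatorial work has already been carried out in Theorem~\ref{thm:2nx}, and the only points requiring verification are the (routine) identification $B=\bigcup_{n\geq1}A_n$ via Proposition~\ref{prop:sup2nx} and the standard fact about order types of increasing unions of initial segments.
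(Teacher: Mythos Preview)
Your argument is correct and follows exactly the paper's approach: the paper's proof is the single line ``Immediate from \prettyref{thm:2nx} and \prettyref{prop:sup2nx},'' and you have simply spelled out the routine details of that deduction (writing the fragment below $2^{\x^\x}$ as the increasing union of the $A_n$ and taking the supremum of the bounds $\omega_{n+2}$).
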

\begin{proof} 
	Immediate from \prettyref{thm:2nx} and \prettyref{prop:sup2nx}. 
\end{proof}

\section{Exponential constants}
Let $\E^+\subseteq \R^{>0}$ be the smallest set of real numbers containing $1$ and closed under $+,\cdot, ^{-1}$ and $\exp$. Let $\E = \E^+ - \E^+$. Note that $\E$ is a subring of $\R$, $\exp(\E)\subseteq \E^+$ and $\E^+\subseteq \E$ (because $1\in \E$ and $\E^+\cdot \E \subseteq \E$). The following result is inspired by the  final remarks of \cite{Dries1984}. The authors gave a detailed proof for the fragment below $2^{2^\x}$, working with Laurent expansions rather than Ressayre forms, and announced a proof for the whole class $\Sk$ using the embrionic form of the transseries in \cite{Dahn1984a}.

\begin{prop} Let $f = \sum_{i<\gamma}e^{\gamma_i} c_i\in \no$ be the Ressayre form of a Skolem function $f$. Then $c_0\in \E^+$ and $c_i \in \E$ for every $i<\alpha$. 
\end{prop}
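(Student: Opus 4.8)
The plan is to prove, by induction on the formation of the Skolem term, the slightly stronger assertion that the Ressayre form of any $f\in\Sk$ has \emph{all} its coefficients in $\E$ and its \emph{leading} coefficient in $\E^+$. It is convenient to isolate the class $\mathcal C\subseteq\no$ of all surreals whose Ressayre coefficients lie in $\E$, and to record first its closure properties, which will do most of the work. First, $\mathcal C$ is a subring of $\no$: the coefficient of a given monomial in a sum or product of two elements of $\mathcal C$ is a finite $\E$-combination of their coefficients — the finiteness in the product case being Neumann's lemma. Second, $\mathcal C$ is closed under summable sums, again because each monomial receives only finitely many contributions, hence under the evaluation of any power series with coefficients in $\E$ at an infinitesimal element of $\mathcal C$. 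Third, and most importantly, $\mathcal C$ is closed under $\exp$: for $x\in\mathcal C$ write $x=x^\uparrow+x^\circ+x^\downarrow$; then $x^\uparrow,x^\downarrow\in\mathcal C$, $x^\circ\in\E$, and $e^x=e^{x^\uparrow}\cdot e^{x^\circ}\cdot e^{x^\downarrow}$ is the product of the monomial $e^{x^\uparrow}\in\M$, the real $e^{x^\circ}\in\exp(\E)\subseteq\E^+$, and $e^{x^\downarrow}=\sum_{n}(x^\downarrow)^n/n!\in\mathcal C$, which is $\sim1$; so $e^x\in\mathcal C$ with leading coefficient $e^{x^\circ}\in\E^+$. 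Finally, since $\E^+$ is closed under addition and multiplication, the product of two elements of $\mathcal C$ with leading coefficient in $\E^+$, and the sum of two \emph{positive} such elements, again have leading coefficient in $\E^+$.

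Granting this, the base cases and the arithmetic cases are immediate. We have $1=e^0\cdot1$ and $\x=\omega=e^{\log\omega}\cdot1$ (recall $\omega\in\M$, so $\log\omega\in\no^\uparrow$), both with leading coefficient $1\in\E^+$; and if $f=f_1+f_2$ or $f=f_1f_2$ the conclusion follows from the inductive hypothesis on $f_1,f_2$ and the ring/leading-coefficient closure of $\mathcal C$ just described (using $f,f_1,f_2>0$).

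The remaining, and principal, case is $f=a^b$ with $a,b\in\Sk$. If $a=1$ then $a^b=1$; otherwise use \prettyref{thm:K} to write $b=b^\uparrow+m$ with $b^\uparrow\in\K^\uparrow$, $m\in\N$, and $a=r\m(1+\eps)$ with $\m\in\G$ the leading monomial of $a$, $r\in\R^{>0}$ its leading coefficient, and $\eps\prec1$. The point of this decomposition is to replace $\log a$, whose expansion would force logarithms of $\m$ and of $r$, by the identity $a^b=r^b\cdot\m^b\cdot(1+\eps)^b$, in which only exponentiations occur. For the first factor, $r^b=r^{b^\uparrow}r^m$: by the inductive hypothesis $r\in\E^+$, hence $r^m\in\E^+$, while $r^{b^\uparrow}=e^{b^\uparrow\log r}$ is a monomial since $b^\uparrow\log r\in\no^\uparrow$; so $r^b\in\mathcal C$ with leading coefficient $r^m\in\E^+$. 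For the second factor, $\m^b=\m^{b^\uparrow}\m^m$ is a monomial, by the group property of $\G$ and the claim in the proof of \prettyref{thm:K} that $\m^t\in\G$ whenever $\m\in\G$ and $0<t\in\K^\uparrow$. For the third, $\eps=a/(r\m)-1\in\mathcal C$ (since $a\in\mathcal C$, $1/r\in\E^+$, and $\m^{-1}\in\M$) is infinitesimal, so $\log(1+\eps)=\sum_{n\ge1}\frac{(-1)^{n+1}}{n}\eps^n\in\mathcal C$ is infinitesimal, whence $b\log(1+\eps)\in\mathcal C$ (as $b\in\mathcal C$ by the inductive hypothesis) and $(1+\eps)^b=e^{b\log(1+\eps)}\in\mathcal C$ with leading coefficient in $\E^+$. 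Multiplying the three factors and using the closure of $\mathcal C$ and of $\E^+$ under products completes the induction, and the proposition follows since $\Sk\subseteq\mathcal C$ with leading coefficients in $\E^+$.

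I expect the real work to be in pinning down the closure properties of $\mathcal C$ — in particular verifying closure under $\exp$ and under the infinite summable sums that occur (which is where summability via Neumann's lemma and the explicit behaviour of $\exp$ and $\log$ recorded in Sections~\ref{sec:series} and~\ref{sec:surreal-expansions} enter) — and in being disciplined about carrying the stronger invariant "leading coefficient in $\E^+$" through the induction, since $\E$ itself is closed under neither $^{-1}$ nor $\exp$ whereas $\E^+$ is; this is exactly why the decomposition $a=r\m(1+\eps)$, which exposes $r\in\E^+$, is the right move in the exponential case.
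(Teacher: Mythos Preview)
Your proof is correct and takes essentially the same approach as the paper: induction on the formation of the Skolem term, with the $a^b$ case handled by writing $a=r\m(1+\eps)$ and exploiting the power-series expansions of $\log(1+\eps)$ and $\exp$. The explicit class $\mathcal C$ and its closure under $\exp$ is a clean repackaging of what the paper does directly via the decomposition $a^b=e^{(b\log a)^\uparrow}e^{(b\log a)^\circ}e^{(b\log a)^\downarrow}$, and your treatment of the factor $r^b=r^{b^\uparrow}r^m$ (using $r\in\E^+$ from the inductive hypothesis) sidesteps the paper's tacit reduction to $b$ purely infinite, but the substance is the same.
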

\begin{proof} By induction on the formation of $f$. The cases $f=a+b$ or $f=a\cdot b$ are straightforward, so it suffices to consider the case $f = a^b$ with $a\geq 2$ and  $b>\N$ (note that in this case $b$ is purely infinite). By definition 
	$$a^b = {e^{(b\log(a))}}^\uparrow {e^{(b \log(a))}}^\circ {e^{(b \log(a))}}^\downarrow.$$ We must study the coefficients of the Ressayre form of $a^b$. 
	Note that ${e^{(b\log(a))}}^\uparrow$ is a monomial, so it does not contribute to the coefficients. Let us consider the other two factors. 
	
	Write $a = \sum_{i<\alpha} e^{\gamma_i} a_i = e^{\gamma_0} a_0 (1 + \eps)$ where $\eps = \sum_{1\leq i <\alpha} \frac{a_i}{a_0} e^{\gamma_i - \gamma_0}$. Then $$\log(a) = \gamma_0 + \log(a_0) + \log(1+\eps).$$  Since $b$ is purely infinite, ${(b \log(a))}^\circ = (b \log(1+\eps))^\circ$. Since $\log(1+\eps)$ is a power series in $\eps$ with rational coefficients, and the  
	coefficients $\frac{a_i}{a_0}$ of $\eps$ belong to $\E$, it follows that ${(b \log(a))}^\circ\in \E$ and therefore ${e^{(b \log(a))}}^\circ \in \E^+$. This is the leading coefficient of $a^b$. 
	
	The other coefficients of $a^b$ come from the power series expansion of ${e^{(b \log(a))}}^\downarrow$, so they belong to the ring generated by the coefficients of $b$ and those of $\eps$, which is included in $\E$. 
\end{proof}

\subsection*{Acknowledgements} The first author gave a plenary talk at the XXI congress of the Unione Matematica Italiana, Pavia 2-7 September 2019 where some of the results of this paper were also mentioned. We thank Vincenzo Mantova and an anonymous referee for their comments on a preliminary version which helped to improve the exposition.

\def\Url#1{\href{https://dx.doi.org/#1}{\textrm{#1}}\endgroup}

\bibliographystyle{plainnat}


\end{document}